\newtheorem{thm}{Theorem}[section]
\newtheorem{cor}[thm]{Corollary}
\newtheorem{prop}[thm]{Proposition}
\newtheorem{lem}[thm]{Lemma}
\theoremstyle{definition}
\newtheorem{defn}[thm]{Definition}
\newtheorem{notn}[thm]{Notation}
\newtheorem{notns}[thm]{Notations}
\theoremstyle{definition}
\newtheorem{rem}[thm]{Remark}
\numberwithin{equation}{section}
\newcommand{\hsp}{\hspace{0.1cm}}
\newcommand{\cjg}[1]{%
  \overline{#1}%
  }                     
\newcommand{\tld}{\widetilde}
\newcommand{\lra}{\longrightarrow}
\newcommand{\ra}{\rightarrow}
\newcommand{\tendsto}[1]{%
\underset{#1}{\lra}%
}
\renewcommand{\Re}{\textnormal{Re}}
\renewcommand{\Im}{\textnormal{Im}}
\newcommand{\C}{\mathbb{C}}		
\newcommand{\E}{\mathbb{E}}		
\newcommand{\N}{\mathbb{N}}		
\renewcommand{\P}{\mathbb{P}}	
\newcommand{\R}{\mathbb{R}}		
\newcommand{\T}{\mathbb{T}}
\newcommand{\Z}{\mathbb{Z}}		
\renewcommand{\1}{\mathds{1}}			
\newcommand{\cA}{\mathcal A}		
\newcommand{\cB}{\mathcal B}		
\newcommand{\cC}{\mathcal C}	
\newcommand{\cD}{\mathcal D}
\newcommand{\cF}{\mathcal F}		
\newcommand{\cL}{\mathcal L}		
\newcommand{\cM}{\mathcal M}	
\newcommand{\cN}{\mathcal N}	
\newcommand{\cS}{\mathcal S}
\newcommand{\cT}{\mathcal T}		
\newcommand{\frkF}{\mathfrak{F}}
\newcommand{\eps}{\varepsilon}
\newcommand{\norm}[1]{%
	\lnorm #1 \rnorm%
}
\newcommand{\triplenorm}[1]{{\left\vert\kern-0.25ex\left\vert\kern-0.25ex\left\vert #1 
		\right\vert\kern-0.25ex\right\vert\kern-0.25ex\right\vert}}
\newcommand{\hsigball}[1]{%
	B_{#1}^{H^{\sigma}}%
}                   
\newcommand{\p}{\partial}
\newcommand{\lnorm}{\left\lVert} 
\newcommand{\rnorm}{\right\rVert} 
\newcommand{\hsig}{H^{\sigma}}
\newcommand{\hsigt}{\hsig(\T)}
\newcommand{\hsignorm}[1]{%
	\lnorm #1 \rnorm_{\hsig}%
}
\newcommand{\linc}{k_1-k_2+...-k_6=0}
\newcommand{\Omgz}{\Omega(\vec{k}) = 0}
\newcommand{\Omgk}{\Omega(\vec{k})}
\newcommand{\Omgeqkp}{\Omega(\vec{k}) = \kappa}
\newcommand{\Omgnz}{\Omega(\vec{k}) \neq 0}
\newcommand{\lincba}{p_1-p_2+...+p_5 =k_1}
\title[Quasi-invariance Gaussian measures for NLS]{Qualitative quasi-invariance of low regularity Gaussian measures for the 1D quintic nonlinear Schrödinger equation}
\author{Alexis Knezevitch}
\begin{document}
	
	\address{ENS de Lyon site Monod
		UMPA UMR 5669 CNRS
		46, allée d’Italie
		69364 Lyon Cedex 07, FRANCE}
	
	\email{alexis.knezevitch@ens-lyon.fr}
	
	\begin{abstract}
		We consider the 1d quintic nonlinear Schrödinger equation (NLS) on the torus with initial data distributed according to the Gaussian measures with covariance operator $(1-\Delta)^{-s}$, and denoted $\mu_s$. For the full range $s>\frac{9}{10}$, we prove that these Gaussian measures are quasi-invariant along the flow of (NLS), meaning that the law of the solution at any time is absolutely continuous with respect to the initial Gaussian measure. Moreover, the condition $s>\frac{9}{10}$ corresponds to the threshold where the Sobolev space $H^{\frac{2}{5}+}(\mathbb{T})$ is of $\mu_s$-full measure (it is of zero $\mu_s$-measure otherwise). This is the lower regularity Sobolev space where we currently know that (NLS) is globally well-posed, thanks to the work of LI-WU-XU in \cite{Li_Wu_Xu_global}. The present work is partially an extension of \cite{knez24transportlowregularitygaussian} : we extend the quasi-invariance from the range $s>\frac{3}{2}$ to the range $s>\frac{9}{10}$, but we do not obtain here quantitative results on the Radon-Nikodym derivatives generated by the quasi-invariance. Our approach is based on the work of Sun-Tzvetkov in \cite{sun2023quasi}, combining a Poincaré-Dulac normal form reduction with energy estimates. However, our main tool to obtain these energy estimates differs: we use the Boué-Dupuis variational formula instead of Wiener Chaos.
	\end{abstract}
	
	\maketitle
	
	\tableofcontents
	
	\section{Introduction}\label{section Introduction}We consider the 1d quintic nonlinear equation on the torus $\T := \R/2\pi\Z$ :

\begin{equation}\label{NLS}
	\begin{cases}
		i\p_t u + \p_x^2 u = |u|^4u, \hspace{.5cm} (t,x) \in \R \times \T \\
		u|_{t=0} = u_0 \in \hsigt
	\end{cases}
	\tag{NLS}
\end{equation}
with initial data lying in the Sobolev space (for $\sigma \in \R$):
\begin{equation*}
	\hsigt := \{ w : \T \ra \C \hsp : \hsp \sum_{n \in \Z} \langle n \rangle^{2\sigma} |\widehat{w}(n)|^2 < +\infty\}
\end{equation*}
where $\widehat{w}$ denotes the Fourier transform, and $\langle n \rangle :=  (1+n^2)^{\frac{1}{2}}$ the japanese bracket. \\

\noindent \textbf{Well-posedness for NLS :} Let us start by reviewing some known results on the well-posedness for this equation. When $\sigma>\frac{1}{2}$, the algebra property of $\hsigt$ allow us to prove the existence of local solution to~\eqref{NLS} running a fixed point argument in the space $\cC([-T,T],\hsig)$, with $T>0$ a small time depending on the $\hsig$-norm of the initial data. When $\sigma \leq \frac{1}{2}$, the situation is more involved, especially for $\sigma$ small where it is not clear how to define the non-linearity $|u|^4u$ for a general function $u \in \cC([-T,T],\hsig)$, even in the sense of distributions. In~\cite{Bourgain1993}, Bourgain introduced appropriate spaces for the solution to exist and, indeed, proved  the local well-posedness of~\eqref{NLS} for all $\sigma >0$ using a fixed point argument with these new spaces. Applying this procedure, we recover the fact that the non-global solutions of~\eqref{NLS} must have a $\hsigt$-norm that blow up in finite time. Hence, when $\sigma \geq 1$, we obtain global solutions thanks to the conservation of the mass and of the Hamiltonian:
\begin{align*}
	M(u) &:= \int_\T |u|^2 dx & H(u) &:= \frac{1}{2} \int_\T |\p_x u|^2 + \frac{1}{6} \int_\T |u|^6 dx
\end{align*}
because it lead to a control on the $\hsig$-norm of the solutions. For $0<\sigma <1$, this argument cease to work because the Hamiltonian may be infinite. However, in~\cite{bourgain2004remark}, Bourgain combined the $I$-method with a normal form reduction to obtain the global well-posedness for all $\sigma > \sigma_*$ for some $\sigma_*<\frac{1}{2}$; and this result was then extended by LI-WU-XU in~\cite{Li_Wu_Xu_global} where the global-wellposedness is proven for all $\sigma>\frac{2}{5}$. Thus, for every $\sigma > \frac{2}{5}$, we are able to define the flow:
\begin{equation}\label{flow of nls}
	\begin{split}
		\Phi(t) \hsp :  \hsp & \hsigt \lra \hsigt \\
						& u_0 \longmapsto \textnormal{the solution of~\eqref{NLS} at time $t$}
	\end{split}
\end{equation}
for every $t \in \R$. To the knowledge of the author, $\sigma > \frac{2}{5}$ is the current threshold for the global well-posedness of~\eqref{NLS}. It is however worth noting that, thanks to~\cite{schippaGWP},~\eqref{NLS} is globally well-posed in $\hsigt$, for $\sigma>\frac{1}{3}$, under the condition that the $L^2(\T)$-norm of the initial data is small enough. \\

\noindent \textbf{Random Gaussian initial data and statement of the result :}
In this paper, the initial data are random (denoted now $\phi^\omega$ instead of $u_0$) and given by the random Fourier series:
\begin{equation}\label{phi omega}
	 \phi^\omega := \sum_{n \in \Z} \frac{g_n(\omega)}{\langle n \rangle^s}e^{inx}
\end{equation}
where $s \in \R$, and $(g_n)_{n \in \Z}$ are independent complex standard Gaussian measures on a probability space $(\Omega,\cF,\P)$. The law of these random variables defined the Gaussian measures $\mu_s$:
\begin{equation*}
	\mu_s = \textnormal{law} (\phi^\omega) = \phi^\omega_\# \P
\end{equation*}
where the symbol $\#$ denotes the push-forward measure ($\phi^\omega_\# \P(A) := \P((\phi^\omega)^{-1}(A))$). 
   More precisely, for $\sigma \in \R$, the series in~\eqref{phi omega} converges in the space $L^2(\Omega,\hsigt)$ if and only if $\sigma<s-\frac{1}{2}$, because:
\begin{equation*}
	\sum_{n \in \Z} \| \frac{g_n}{\langle n \rangle^s} e^{inx}\|^2_{L^2(\Omega,\hsig)} = \sum_{n \in \Z} \langle n \rangle^{2(\sigma -s )} < +\infty \iff \sigma < s -\frac{1}{2}
\end{equation*}
We obtain that for $\sigma<s-\frac{1}{2}$, the random variable $\phi^\omega$ is valued in $\hsigt$; and therefore $\mu_s$ is a probability measure on the Borel $\sigma$-algebra $\cB(\hsigt)$. It is known that almost-surely, 
\begin{align}\label{phi in h - but not in h}
	\phi^\omega &\in H^{s-\frac{1}{2}-}(\T) := \bigcap_{r<s-\frac{1}{2}} H^r(\T) & &\textnormal{and,}  & \phi^\omega &\notin H^{s-\frac{1}{2}}(\T) 
\end{align}
Consequently, to apply the global flow in~\eqref{flow of nls} to $\phi^\omega$, we need the condition $s-\frac{1}{2}>\frac{2}{5}$, that is $s>\frac{9}{10}$. We emphasize that throughout this paper, for $s> \frac{9}{10}$, we fix $\sigma<s - \frac{1}{2}$ close to $s - \frac{1}{2}$ (close enough to $s-\frac{1}{2}$ so that we have in particular $\sigma>\frac{2}{5}$), and see $\mu_s$ as a probability measure on $\cB(\hsigt)$. This choice of $\sigma$ is not particularly relevant, as we could have defined $\mu_s$ intrinsically on $H^{s-\frac{1}{2}-}(\T)$.\\

The random initial data $\phi^\omega$ generate solutions that are also random. Therefore, one can study the law of the solutions at any given time $t \in \R$, and compare it to the law of the initial data; in particular one may wonder if they are equal, absolutely continuous with respect to each other, or mutually singular. We can reformulate this in terms of transport of Gaussian measures by the flow. Indeed, the law of the solution at a time $t \in \R$ is:
\begin{equation*}
	\textnormal{law}(\Phi(t)\phi^\omega) = (\Phi(t) \circ \phi^\omega)_\# \P = \Phi(t)_\#({\phi^\omega}_\# \P) = \Phi(t)_\# \mu_s
\end{equation*} 
and one can compare the transported measure $\Phi(t)_\# \mu_s$ with the initial measure $\mu_s$. \\
The study of Hamiltonian PDEs with random initial data stems from physical motivation in the work of Lebowitz-Rose-Speer in~\cite{lebowitz1988statistical}, and was then developed by Bourgain in \cite{bourgain1994periodic}. In these works, the initial data are distributed according to Gibbs measures (denoted $Gb$ here), defined formally as $Gb = \frac{1}{Z} e^{-H(u)} du$, where $H$ is the Hamiltonian and $du$ is formally the Lebesgue measure (which does not exist on infinite dimensional vector space). Formally, the conservation of the Hamiltonian and the invariance of the Lebesgue measure under an Hamiltonian flow (Liouville's theorem) imply that:
\begin{equation*}
	\Phi(t)_\# Gb = \frac{1}{Z} e^{-H(\Phi(-t)u)} \Phi(t)_\# du = \frac{1}{Z} e^{-H(u)} du = Gb
\end{equation*}
which means that the Gibbs measure is invariant under the flow, or equivalently that the law of the random solutions (generated by initial data distributed according to the Gibbs measure) is $Gb$ itself, at any time.
This result was rigorously proven by Bourgain in \cite{bourgain1994periodic} for~\eqref{NLS}. Since then, the invariance of Gibbs measures has been studied for many models. These measures appears naturally as density measures with respect to suitable Gaussian measures. In \cite{tzvetkov2015quasiinvariant}, Tzvetkov initiated a program on the study of Hamiltonian PDEs with initial data distributed according to Gaussian measures. Contrary to the Gibbs measure, we do not expect these measures to be invariant along the flow. Indeed, this is what the following formal computations suggest:  formally, we have $\mu_s = \frac{1}{Z_s}e^{-\frac{1}{2}\| u \|_{H^s}^2} du$, so that:
\begin{equation*}
	\Phi(t)_\# \mu_s = \frac{1}{Z_s}e^{-\frac{1}{2}\| \Phi(-t) u \|_{H^s}^2} \Phi(t)_\#du = e^{-\frac{1}{2}(\| \Phi(-t) u \|_{H^s}^2-\| u \|_{H^s}^2)} \frac{1}{Z_s}e^{-\frac{1}{2}\| u \|_{H^s}^2}du 
\end{equation*}
that is,
\begin{equation}\label{formal transport of mus}
	\Phi(t)_\# \mu_s = e^{-\frac{1}{2}(\| \Phi(-t) u \|_{H^s}^2-\| u \|_{H^s}^2)} d\mu_s
\end{equation}
Since in general the flow $\Phi(t)$ does not preserve the $H^s$-norm, we expect $\Phi(t)_\# \mu_s$ to be not equal but absolutely continuous with respect to $\mu_s$. If that is indeed the case for every $t \in \R$, we say that $\mu_s$ is \textit{quasi-invariant} along the flow. From the Radon-Nikodym theorem, $\mu_s$ is quasi-invariant along the flow, if and only if, for every $t \in \R$, and every $A \in \cB(\hsigt)$:
\begin{equation*}
	\mu_s(A) = 0 \implies \mu_s(\Phi(t)^{-1}(A)) = 0
\end{equation*}
Formula~\eqref{formal transport of mus}, despite being formal, is of interest. Practically, one can test this formula with other Hamiltonian PDEs. For example, considering the propagator $e^{it\p_x^2}$ of linear Schrödinger equation: 
\begin{equation}\label{LS}
	\begin{cases}
		i\p_t u + \p_x^2 u = 0, \hspace{.5cm} (t,x) \in \R \times \T \\
		u|_{t=0} = u_0 
	\end{cases}
	\tag{LS}
\end{equation}
we recover the fact that $(e^{it\p_x^2})_\# \mu_s = \mu_s$, since $e^{it\p_x^2}$ preserves the $H^s$-norm. Moreover, for Hamiltonian PDEs that preserves the $L^2$-norm, we can expect the \textit{white noise}, that is the measure $\mu_0$, to be invariant along the flow. Technically, this formula will be useful because it will hold true when replacing~\eqref{NLS} with an approximated truncated equation. \\

Quasi-invariant results have been proven recently for many models, see \cite{burq2024almost,coe2024sharp,debussche2021quasi,forlano2025improvedquasiinvarianceresultperiodic,forlano_and_soeng2022transport,forlano2022quasi,forlano_tolomeo_invariance_stochastic_wave,forlano_and_trenberth2019transport,genovese2022quasi,genovese2023quasi,genovese2023transport,gunaratnam2022quasi,oh_soeng2021quasi,oh2018optimal,oh2019quasi,oh2017quasi,oh2020quasi,planchon2020transport,planchon2022modified,sosoe2020quasi,tzvetkov2015quasiinvariant}. In the case of~\eqref{NLS}, the first result on quasi-invariance emerged as a consequence of the invariance of the Gibbs measures (see again \cite{bourgain1994periodic}). This measure being rigorously defined as:
\begin{equation*}
	Gb = e^{-\frac{1}{6} \| u\|^6_{L^6(\T)}} d\mu_1,
\end{equation*}
we obtain that 
\begin{equation*}
	\Phi(t)_\# Gb = Gb \implies \Phi(t)_\# \mu_1 = e^{-\frac{1}{6}(\| \Phi(-t)u\|^6_{L^6(\T)} -  \| u\|^6_{L^6(\T)})} d\mu_1
\end{equation*}
which implies the quasi-invariance for $\mu_1$. Later on, Planchon-Tzvetkov-Visciglia proved in \cite{planchon2020transport} the quasi-invariance of $\mu_s$ for all $s=2k$, with $k \in \N \setminus \{0\}$. This result was then extended to all $s>\frac{3}{2}$ in \cite{knez24transportlowregularitygaussian}. As mention above, knowing that there is a global flow to~\eqref{NLS} in $\hsigt$, for all $\sigma > \frac{2}{5}$, one can define the transported measure $\Phi(t)_\# \mu_s$ for all $s>\frac{9}{10}$, and wonder if quasi-invariance holds for all $s>\frac{9}{10} $. In this paper, we provide a positive answer to this question:
\begin{thm}\label{thm quasi-inv}
	For every $s > \frac{9}{10}$, the Gaussian measure $\mu_s$ is quasi-invariant along the flow of~\eqref{NLS}. In other words, the push-forward measure $\Phi(t)_\# \mu_s$ is absolutely continuous with respect to $\mu_s$ for every time $t\in \R$.
\end{thm}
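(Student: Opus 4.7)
I will follow the scheme announced in the abstract: introduce a truncated flow, derive a modified energy via a Poincaré–Dulac normal form, prove uniform energy bounds via the Boué–Dupuis variational formula, and then pass to the limit.

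\emph{Step 1: Truncation.} Let $P_N$ denote the Dirichlet projection on frequencies $|n|\leq N$ and consider the truncated equation $i\p_t u_N + \p_x^2 u_N = P_N\bigl(|P_N u_N|^4 P_N u_N\bigr)$. Its flow $\Phi_N(t)$ decouples as the free Schrödinger evolution on frequencies $|n|>N$ (which preserves $\mu_s$) and a finite-dimensional Hamiltonian ODE on the low modes, so that the formal identity~\eqref{formal transport of mus} becomes a rigorous Liouville-type identity:
\begin{equation*}
 d\bigl(\Phi_N(t)_\# \mu_s\bigr) = F_N(t,u)\, d\mu_s, \qquad F_N(t,u) := \exp\Bigl(-\tfrac12 \bigl(\|P_N \Phi_N(-t)u\|_{H^s}^2 - \|P_N u\|_{H^s}^2\bigr)\Bigr).
\end{equation*}
Theorem~\ref{thm quasi-inv} will follow once I establish a uniform estimate $\sup_N \|F_N(t,\cdot)\|_{L^p(\mu_s)} < +\infty$ for some $p>1$, locally uniformly in $t\in\R$.

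\emph{Step 2: Poincaré–Dulac normal form.} Computing $\tfrac{d}{dt}\|P_N u_N\|_{H^s}^2$ along $\Phi_N$ produces a sextic sum over $\linc$ weighted by the symbol $\langle k_1\rangle^{2s}-\langle k_2\rangle^{2s}+\cdots-\langle k_6\rangle^{2s}$ and oscillating with phase $\Omgk=k_1^2-k_2^2+\cdots-k_6^2$. At regularity $s$ only slightly above $\tfrac{9}{10}$, this quantity is not controllable pointwise in $\hsig$. I integrate by parts in time on the non-resonant set $\Omgnz$, trading one derivative for $|\Omgk|^{-1}$ and substituting each $\p_t \hat u_N(k_j)$ via the equation. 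This yields a modified energy
\begin{equation*}
E_{s,N}(u) := \|P_N u\|_{H^s}^2 + R_{s,N}(u),
\end{equation*}
whose derivative along $\Phi_N$ consists of a resonant sextic piece (on $\Omgz$) and higher-order decic pieces, each gaining a factor $|\Omgk|^{-1}$ that should make them sub-critical in $\hsig$ as soon as $s > \tfrac{9}{10}$.

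\emph{Step 3: Energy estimates via Boué–Dupuis.} The uniform $L^p(\mu_s)$-bound on $F_N(t,\cdot)$ reduces, after replacing $\|P_N u\|_{H^s}^2$ by $E_{s,N}(u)$, to controlling
\begin{equation*}
\int \exp\Bigl( p \int_0^t \Bigl| \tfrac{d}{dt'} E_{s,N}\bigl(\Phi_N(t')u\bigr) \Bigr| \, dt' \Bigr) \, d\mu_s(u)
\end{equation*}
together with the uniform $\mu_s$-exponential integrability of $R_{s,N}$. Parametrising $\phi^\omega$ as the terminal value at time $1$ of a stochastic series built from independent complex Brownian motions $(B_n)_{n\in\Z}$, I apply the Boué–Dupuis representation
\begin{equation*}
-\log \int e^{-F}\,d\mu_s \;=\; \inf_{\theta}\, \E\Bigl[F\bigl(\phi^\omega+I(\theta)\bigr)+\tfrac12 \int_0^1 \|\theta(\tau)\|_{H^s}^2\,d\tau\Bigr],
\end{equation*}
where $I(\theta)$ is the associated Cameron–Martin drift. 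Each multilinear component of $R_{s,N}$ and of $\tfrac{d}{dt}E_{s,N}$, evaluated at $\phi^\omega+I(\theta)$, splits into a purely Gaussian contribution (bounded via moments of~\eqref{phi omega}) and into terms involving $I(\theta)$ that are controlled by para-product decompositions and multilinear Strichartz-type estimates in the spirit of~\cite{Li_Wu_Xu_global}. The target bound takes the form $C_1 + \tfrac14 \int_0^1 \|\theta(\tau)\|_{H^s}^2\,d\tau$, so that the drift cost absorbs all the $\theta$-dependence uniformly in $N$.

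\emph{Step 4: Conclusion.} Steps~1–3 yield $\sup_N \|F_N(t,\cdot)\|_{L^p(\mu_s)} < +\infty$, hence weak-$L^p(\mu_s)$ compactness of the family $(F_N(t,\cdot))$. The global well-posedness of~\eqref{NLS} in $\hsigt$ from~\cite{Li_Wu_Xu_global}, applied $\mu_s$-almost surely to $\phi^\omega$, gives $\Phi_N(t)\phi\to \Phi(t)\phi$ $\mu_s$-a.s.; the weak limit then identifies $\tfrac{d\Phi(t)_\#\mu_s}{d\mu_s}$ and establishes absolute continuity. The main obstacle is the calibration of Step~3: choosing the normal-form correction so that $R_{s,N}$ and $\tfrac{d}{dt}E_{s,N}$ are estimable in $\hsig$ at the threshold $\sigma = s-\tfrac12-$ with $s>\tfrac{9}{10}$, while simultaneously writing Boué–Dupuis bounds tight enough for every sextic and decic piece. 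The condition $s>\tfrac{9}{10}$ enters precisely from the worst non-resonant sextic interaction, where the gain $|\Omgk|^{-1}$ only just compensates the derivative loss.
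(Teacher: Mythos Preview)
Your overall architecture (truncation, normal form, Boué--Dupuis, limit) matches the paper, but Step~3 contains a genuine gap that the paper's proof specifically avoids. You propose to bound
\[
\int \exp\Bigl( p \int_0^t \bigl| Q_{s,N}\bigl(\Phi_N(t')u\bigr) \bigr| \, dt' \Bigr) \, d\mu_s(u)
\]
by applying Boué--Dupuis. But the integrand is a functional of $u$ through the \emph{nonlinear flow} $\Phi_N(t')$: when you evaluate it at $\phi^\omega+I(\theta)$, you get $Q_{s,N}\bigl(\Phi_N(t')(\phi^\omega+I(\theta))\bigr)$, and there is no usable description of $\Phi_N(t')$ acting on a perturbed Gaussian in terms of the drift~$\theta$. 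The Boué--Dupuis inequality in the paper (Lemma~\ref{lem boue-dupuis}) is only applied to the \emph{static} functionals $\cF_{s,N}(\pi_N u)$, i.e.\ to $R_{s,N}$ and $Q_{s,N}$ themselves, not to their time-integrals along the flow. Consequently the uniform bound $\sup_N\|F_N(t,\cdot)\|_{L^p(\mu_s)}<\infty$ that you claim in Step~4 is not established; indeed the paper explicitly states that it does \emph{not} obtain $L^p$ bounds on the Radon--Nikodym densities for $s\le \tfrac32$.

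What the paper actually does is different in a crucial way. Boué--Dupuis yields exponential integrability of $R_{s,N}$ and $Q_{s,N}$ on $H^\sigma$-balls (Proposition~\ref{prop exp integrability Fs,N}), from which one extracts $\|\1_{B_R}Q_{s,N}\|_{L^p(\mu_s)}\le C_{s,R}\,p^\beta$ with $\beta<1$ (Proposition~\ref{prop quantitative Lp estimate}). This sublinear growth in $p$ is the key: differentiating $t\mapsto \rho_{s,N}(\Phi^N(t)K)$ and applying H\"older at each fixed time (where the change of variables absorbs the flow) gives a differential inequality $|F'(t)|\le C p^\beta F(t)^{1-1/p}$, which after integration and Yudovich-type optimisation in $p$ yields $\rho_{s,N}(\Phi^N(t)K)\le C\,\rho_{s,N}(K)^{1/2}$ (Lemma~\ref{lem quantitative ineq quasi-inv WGM}). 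The limit $N\to\infty$ is then taken at the level of this set inequality via the approximation properties of Proposition~\ref{prop approx Phi by PhiN set version} and inner regularity, not via weak-$L^p$ compactness of densities. Your Step~4 therefore also needs revision: pointwise convergence $\Phi_N(t)\phi\to\Phi(t)\phi$ is not how the passage to the limit is organised here.
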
 

\noindent \textbf{Overview of the proof and comments :} Let us first remark that, writing as it is, the formal density in~\eqref{formal transport of mus} is ill-defined on the support of $\mu_s$. Indeed, from~\eqref{phi in h - but not in h}, we have that $\mu_s(H^{s}(\T))=0$, so both $\| u\|_{H^s}^2$ and $\| \Phi(-t) u\|_{H^s}^2$ are infinite on the support of $\mu_s$. However, it is the difference between these two infinite terms that appears in~\eqref{formal transport of mus}, so we can still hope for some cancellations that might give meaning to this difference. To apply this approach, we first work with a truncated approximated version of~\eqref{NLS}:
\begin{equation*}
	\begin{cases}
		i\p_t u + \p_x^2 u = \pi_N \left(|\pi_Nu|^4\pi_Nu \right), \hspace{0.2cm} (t,x) \in \R \times \T \\
		u|_{t=0} = u_0 
	\end{cases}
\end{equation*}
with its flow denoted $\Phi^N(t)$, for which we rigorously have:
\begin{align}\label{intro trsprt mus by PhiN}
	\Phi^N(t)_\# \mu_s &= G_{s,N,t} \mu_s & &\textnormal{with} & G_{s,N,t}(u) := e^{-\frac{1}{2}(\| \pi_N \Phi^N(-t)u\|_{H^s}^2-\| \pi_N u\|_{H^s}^2)}
\end{align}
The challenge then is to "take the limit $N \ra \infty$" into this equality, that is, deduce from this the quasi-invariance of $\mu_s$ along the original flow $\Phi(t)$. First and foremost, we need to prove that $\Phi^N(t)$ is indeed an approximation of $\Phi(t)$ (in a suitable way). Thereafter, we can employ different approach which can lead to the quasi-invariance of $\mu_s$ under $\Phi(t)$ that we mention (informally): \\
-- Find directly a limit to the \textit{truncated densities} $G_{s,N,t}$ when $N \ra \infty$. To do this, one can try to prove uniform (in $N \in \N$) $L^p(d\mu_s)$ bound for the $G_{s,N,t}$ (with eventually an additional cut-off) which allow to extract from the $G_{s,N,t}$ a sequence that weakly-converges to a function $G_{s,t}$ (for such an example we refer to \cite{debussche2021quasi}). Other types of convergence can lead to the quasi-invariance; for example, in \cite{knez24transportlowregularitygaussian}, uniform convergence on compact sets (for continuous functions) was used. \\
-- Prove a quantitative inequality of the type (for some $\alpha \in (0,1)$):
\begin{equation}\label{intro quantitative ineq}
	\mu_s(\Phi^N(t)A) \leq C_{s,t} \mu_s(A)^{1-\alpha} \hsp \hsp \textnormal{which transfers to} \hsp \hsp \mu_s(\Phi(t)A) \leq C_{s,t} \mu_s(A)^{1-\alpha}
\end{equation}
with $A$ a Borel set of $\hsigt$, which is possibly contained in a ball  $B^{\hsig}_R$ of $\hsigt$ (or even compact) in which case the constant $C_{s,t}$ depends also on $R>0$ (for an example of the application of this method we refer to \cite{tzvetkov2015quasiinvariant}). \\
We also refer to the introduction of \cite{forlano_and_trenberth2019transport} where different methods to prove quasi-invariance are reviewed. \\

In this paper, we follow the second approach. In any case, for both method, we need to analyze the difference $\| \pi_N \Phi^N(-t)u\|_{H^s}^2-\| \pi_N u\|_{H^s}^2$, which writes thanks to the fundamental theorem of calculus and the additivity of the flow:
\begin{equation}\label{diffce Hs norm fund thm of calculus}
	\frac{1}{2}\| \pi_N \Phi^N(-t)u\|_{H^s}^2-\frac{1}{2}\| \pi_N u\|_{H^s}^2 = \int_0^{-t} D_{s,N}(\Phi^N(\tau)) d\tau
 \end{equation}
with $D_{s,N}(v) :=  \frac{d}{dt}\big( \frac{1}{2}\| \pi_N \Phi^N(t)v\|_{H^s}^2\big)|_{t=0}$. In fact, we will work with a rewriting of this equality; the application of the Poincaré-Dulac normal reduction from \cite{sun2023quasi} (which essentially relies on an integration by part in the formula above) leads to consider intermediate objects, a \textit{modify energy} and a \textit{weighted Gaussian measure}: 
\begin{align*}
	E_{s,N}(u) &= \frac{1}{2}\| \pi_N u \|_{H^s}^2  +R_{s,N}(u) & &\textnormal{and} & \rho_{s,N} = e^{-R_{s,N}(u)} d\mu_s
\end{align*}
with $R_{s,N}$ called the \textit{energy correction}, which transform~\eqref{diffce Hs norm fund thm of calculus} and \eqref{intro trsprt mus by PhiN} into:
\begin{align*}
	E_{s,N}(\pi_N \Phi^N(-t)u) - E_{s,N}(\pi_N u) &=  \int_0^{-t} Q_{s,N}(\Phi^N(\tau)) d\tau & &\textnormal{and} & \Phi^N(t)_\# \rho_{s,N} = F_{s,N,t} d\rho_{s,N}
\end{align*} 
with $F_{s,N,t}(u):=e^{-(E_{s,N}(\pi_N \Phi^N(-t)u) - E_{s,N}(\pi_N u))}$, and $Q_{s,N}(u) :=  \frac{d}{dt}E_{s,N}(\Phi^N(t)u)|_{t=0}$ called \textit{the modified energy derivative}. One may see the modified energies $E_{s,N}$ and the weighted Gaussian measures $\rho_{s,N}$ as analogs for every $s$ of the standard energy and the Gibbs measure:
\begin{align*}
	E(u) &:= \frac{1}{2} \|u \|^2_{L^2} + H(u) =\frac{1}{2} \|u \|^2_{H^1} + \frac{1}{6} \|u \|_{L^6}^6, &  Gb&= e^{-\frac{1}{6} \|u \|_{L^6}^6} d\mu_1
\end{align*} 
where $\frac{1}{6} \|u \|_{L^6}^6$ is seen as a correction term. The significant difference is that $E_{s,N}$ is not conserved by the flow. Consequently, we do not hope for $\rho_{s,N}$ to be invariant under the flow. However, the hope is that $E_{s,N}$ is "closer" to be conserved than the original energy $\frac{1}{2} \|u \|^2_{H^s}$, and that we can recover the quasi-invariance of $\mu_s$ from the quasi-invariance of the $\rho_{s,N}$ (for every $N \in \N$), as we recover the quasi-invariance of $\mu_1$ from the invariance of $Gb$. To transfer the quasi-invariance from the $\rho_{s,N}$ to $\mu_s$, we will establish quantitative inequalities on the sets $\rho_{s,N}(\Phi^N(t)(A))$ (see ~\eqref{quantitative ineq quasi-inv WGM}) uniformly in $N$, following the analysis initiated in \cite{tzvetkov2015quasiinvariant}. These inequalities will be obtained through $L^p$ estimates on both $R_{s,N}$ and $Q_{s,N}$. \\
We emphasize that normal form reductions has already been used in the context of quasi-invariance, see for example \cite{oh2017quasi,oh2018optimal,oh_soeng2021quasi,forlano_and_trenberth2019transport}. \\

\noindent \textbf{Further remarks :} -- In this paper, the main tool to establish the quantitative $L^p$ estimates (for $R_{s,N}$ and $Q_{s,N}$) is the Boué-Dupuis formula.  One can try to obtain these through Wiener Chaos instead, as it was for example performed in \cite{sun2023quasi}; however, it seems to the author that this method does not work (or is more complicated) in order to extend the quasi-invariance of $\mu_s$ from $s>\frac{3}{2}$ (see \cite{knez24transportlowregularitygaussian}) down to $s>\frac{9}{10}$, even though it could extend the quasi-invariance down to $s>s_*$ for some $s_* \in (\frac{9}{10},\frac{3}{2})$. \\
-- It would be interesting to find a sharp threshold for $s$ above which we have quasi-invariance of $\mu_s$ and under which singular results happen, typically that the transported measure $\Phi(t)_\# \mu_s$ and $\mu_s$ are mutually singular. Progress have been made in this direction in \cite{coe2024sharp} in the case of the cubic Szegö equation. However, for \eqref{NLS}, surpassing the threshold $s>\frac{9}{10}$ requires the construction of a global flow in $\hsigt$ for $\sigma\leq \frac{2}{5}$, either deterministically (for every initial data) or probabilistically (for $\mu_s$-almost every initial data). In the latter case, connections between quasi-invariance and well-posedness have been explored in \cite{forlano2022quasi}. \\
-- In this paper, we obtain only the quasi-invariance of $\mu_s$ for all $s>\frac{9}{10}$, without providing any additional quantitative results on the generated Radon-Nikodym derivatives, which may prove useful for other applications. Hence, this paper extends partially the result in \cite{knez24transportlowregularitygaussian}, where $L^p(d\mu_s)$ bounds for the Radon-Nikodym derivatives are obtained (for all $s>\frac{3}{2}$), relying on the conservation of the mass and the Hamiltonian. Note also that in the case $s\leq \frac{3}{2}$, we are not in a position to use the conservation of the Hamiltonian anymore.
For other quantitative results of this type, we refer to \cite{debussche2021quasi,forlano_and_soeng2022transport,forlano_and_trenberth2019transport,planchon2022modified,genovese2023transport}. \\

\noindent \textbf{Organization of the paper :} This paper is organized as follows: \\
First, in Section~\ref{section Preliminaries}, we present some notations and tools that will be used throughout the paper; in particular, we state the Boué-Dupuis fromula which will be crucial for the energy estimates. In Section~\ref{section truncated system Poinacaré-Dulac and energy estimates}, we gather the central properties for the proof of quasi-invariance: we review the Poincaré-Dulac normal form reduction (performed for the truncated flow) in~\cite{knez24transportlowregularitygaussian}, which is an adaptation of the one in~\cite{sun2023quasi}, and then, we state quantitative $L^p$ estimates on the energy functionals produced by this normal form reduction. These estimates will be obtained here under the assumption of exponential integrability properties that are proved in the dedicated Section~\ref{section Exponential integrability} using the Boué-Dupuis formula. In section~\ref{section Transport of GM by the truncated flow}, we study the transport of $\mu_s$ under the truncated flow, notably by establishing~\eqref{intro trsprt mus by PhiN}. Combining the results of Sections~\ref{section truncated system Poinacaré-Dulac and energy estimates} and~\ref{section Transport of GM by the truncated flow} will allow us to prove, in Section~\ref{section proof of the quasi-inv}, the quasi-invariance of $\mu_s$. In preparation for the key Section~\ref{section Exponential integrability}, we develop our deterministic toolbox in Section~\ref{section lower bound omega and counting bounds} and~\ref{section Deterministic estimates}. More precisely, in Section~\ref{section lower bound omega and counting bounds} we prove a lower bound on the \textit{resonant function} and some counting bounds; and, in Section~\ref{section Deterministic estimates} we prove deterministic estimate, notably relying on Strichartz estimates. Finally, in Appendix~\ref{appendix construction of a global flow and approximation}, we review the construction of the flow in $H^{\frac{2}{5}+}(\T)$ (using the Bourgain spaces) and we prove approximation properties of the flow by the truncated flow. \\

\noindent \textbf{Acknowledgment :} The author would like to thank Chenmin Sun and Nikolay Tzvetkov for suggesting this problem, for their support, and for insightful discussions. This work is partially supported by the ANR project Smooth ANR-22-CE40-0017.
	
	\section{Preliminaries}\label{section Preliminaries}In this section, we group some notations and tools that we will use throughout the paper.

\subsection{Notations}
To refer to the non-linearities in~\eqref{NLS} and~\eqref{truncated equation}, we consider:
\begin{notn}\label{notation non-linearities} For $u : \T \ra \C$, we denote:
	\begin{align*}
	F(u) & := |u|^4u, &	F_N(u) &:= \pi_N(|\pi_Nu|^4 \pi_Nu)
	\end{align*}
	for $N \in \N$, and $\pi_N$ the Dirichlet projector given by:
	\begin{equation*}
		\pi_N(\sum_{k \in \Z} \widehat{u}(k) e^{ikx}) = \sum_{|k| \leq N} \widehat{u}(k) e^{ikx}
	\end{equation*} 
\end{notn}

\begin{notns} We will intensively use the following notations:\\
-- Given a set of frequencies $k_1,...,k_m \in \Z$, we denote by $k_{(1)},...,k_{(m)}$ a rearrangement of the $k_j's$ such that 
		\begin{equation*}
			|k_{(1)}| \geq |k_{(2)}| \geq ... \geq |k_{(m)}| 
		\end{equation*}
-- Similarly, given a set of dyadic integers $N_1,...,N_m \in 2^{\N}$, we denote by $N_{(1)},...,N_{(m)}$ a non-increasing rearrangement of the $N_j$. \\
-- Moreover, for $\alpha \in \R$, we write concisely $M^{\alpha +}$ (where $M \in \N$) to refer to a function $C(M)$ which satisfies: for every $\eps>0$, there exists a constant $C_\eps >0$ such that for every $M \in \N$, $C(M) \leq C_\eps M^{\alpha +\eps}$. With this notation, note that we have:
\begin{equation*}
	N_1^{\alpha_1 + }N_2^{\alpha_2 + }...N_m^{\alpha_m + } = N_1^{\alpha_1 + }N_2^{\alpha_2  }...N_m^{\alpha_m  }
\end{equation*}
for $N_1,...,N_m$ (dyadic) integers ordered such that $N_1\geq N_2 \geq \dots \geq N_m$. \\
-- Finally, for $\alpha \in \R$, we write concisely $M^{\alpha-}$ to refer to a function $C(M)$ which satisfies: there exists $\eps>0$ and a constant $C_0>0$ such that for every $M \in \N$, $C(M) \leq C_0  M^{\alpha - \eps}$.
\end{notns}

\subsection{The resonant function and the symmetrized derivatives}

Here, we introduce quantities that emerge in the Poincaré-Dulac normal reduction of Section~\ref{section truncated system Poinacaré-Dulac and energy estimates}.
\begin{defn}\label{def psi and omega}
		-- For $\vec{k}=(k_1,...,k_6)\in \Z^6$, we define the quantities:
		\begin{align*}
			\psi_{2s}(\vec{k}) &:= \sum_{j=1}^{6} (-1)^{j-1}|k_j|^{2s}, & \Omega(\vec{k}) &= \sum_{j=1}^6 (-1)^{j-1} k_j^2
		\end{align*}
		called respectively the \textit{symmetrized derivative} (of order $2s$) and the \textit{resonant function}. \\
		 -- Similarly, for $\vec{k}=(k_1,...,k_4)\in \Z^4$, we define the degenerated quantities:
		\begin{align*}
			\psi^{\downarrow}_{2s}(\vec{k}) &:= \sum_{j=1}^{4} (-1)^{j-1}|k_j|^{2s}, & \Omega^{\downarrow}(\vec{k}) &= \sum_{j=1}^4 (-1)^{j-1} k_j^2
		\end{align*}
		called respectively the \textit{degenerated symmetrized derivative} (of order $2s$) and the \textit{degenerated resonant function}.
\end{defn}
\begin{rem}
	For $(k_1,...,k_6) \in \Z^6$, $\psi^{\downarrow}_{2s}$ and $\Omega^{\downarrow}$ are respectively obtained from $\psi_{2s}$ and $\Omega$ when the frequencies $k_5$ and $k_6$ are equal, that is:
	\begin{align*}
		\psi^{\downarrow}_{2s}(k_1,...,k_4) &= \psi_{2s}(k_1,...,k_4,k_5,k_5) & \Omega^{\downarrow}(k_1,...,k_4)& = \Omega(k_1,...,k_4,k_5,k_5)
	\end{align*}
\end{rem}
Classically, we have the factorization:
\begin{lem}[Factorization of the degenerated resonant function]\label{lem factorization dgn Omega}
	Let $(k_1,k_2,k_3,k_4) \in \Z^4$ such that $k_1-k_2+k_3-k_4=0$; then:
	\begin{equation*}
		\Omega^\downarrow(\vec{k}) = 2(k_2-k_1)(k_3-k_2)
	\end{equation*}
\end{lem}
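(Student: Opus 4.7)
The plan is to carry out a direct algebraic computation, using the linear constraint to eliminate $k_4$ and then factoring the resulting quadratic expression in $k_1,k_2,k_3$.

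First I would use the hypothesis $k_1-k_2+k_3-k_4=0$ to write $k_4 = k_1-k_2+k_3$. Substituting into the definition $\Omega^\downarrow(\vec{k}) = k_1^2 - k_2^2 + k_3^2 - k_4^2$ and expanding the square $(k_1-k_2+k_3)^2 = k_1^2+k_2^2+k_3^2-2k_1k_2+2k_1k_3-2k_2k_3$, the pure square terms $k_1^2$, $k_3^2$ cancel and I am left with
\begin{equation*}
\Omega^\downarrow(\vec{k}) = -2k_2^2 + 2k_1k_2 - 2k_1k_3 + 2k_2k_3.
\end{equation*}

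Next, I would factor by grouping: pairing the first two terms and the last two gives $2k_2(k_1-k_2) + 2k_3(k_2-k_1) = 2(k_1-k_2)(k_2-k_3)$, which equals $2(k_2-k_1)(k_3-k_2)$, the claimed identity.

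There is no real obstacle here; the statement is a purely algebraic fact and the only thing to be careful about is book-keeping of signs when expanding the square. An alternative (and perhaps cleaner) route would be to use the general identity $a^2-b^2 = (a-b)(a+b)$ twice: writing $\Omega^\downarrow = (k_1-k_2)(k_1+k_2) + (k_3-k_4)(k_3+k_4)$ and using $k_1-k_2 = -(k_3-k_4) = k_2-k_3+k_4-k_3+\cdots$, one can extract the common factor $(k_2-k_1)$ without ever expanding a square. I would include whichever presentation is shortest in the final write-up.
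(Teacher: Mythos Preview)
Your main computation is correct and is exactly the standard way to verify this identity; the paper itself states the lemma as classical and gives no proof, so there is nothing further to compare. One minor remark: the ``alternative route'' sketched at the end is garbled (the chain $k_1-k_2 = -(k_3-k_4) = k_2-k_3+k_4-k_3+\cdots$ does not parse); if you keep it, clean it up to read $\Omega^\downarrow = (k_1-k_2)(k_1+k_2) - (k_1-k_2)(k_3+k_4) = (k_1-k_2)(k_1+k_2-k_3-k_4) = (k_1-k_2)\cdot 2(k_2-k_3)$, or simply drop it.
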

For conciseness, it will be convenient later to consider additionally the following quantities:
\begin{defn}\label{def Big Psi}
	For $\vec{k}=(k_1,...,k_6) \in \Z^6$, we define:
	\begin{align*}
		\Psi_{2s}^{(0)}(\vec{k}) &:= \1_{\Omgz} \psi_{2s}(\vec{k}), & \Psi_{2s}^{(1)}(\vec{k}) &:= \1_{\Omgnz} \frac{\psi_{2s}(\vec{k})}{\Omgk}
	\end{align*}
	Similarly, we also consider the degenerated version of this quantities:
	\begin{align*}
		\Psi_{2s}^{(0),\downarrow}(\vec{k}) &:= \1_{\Omega^\downarrow(\vec{k}) = 0} \psi^\downarrow_{2s}(\vec{k}), & \Psi_{2s}^{(1),\downarrow}(\vec{k}) &:= \1_{\Omega^\downarrow(\vec{k}) \neq 0} \frac{\psi^\downarrow_{2s}(\vec{k})}{ \Omega^\downarrow(\vec{k})}
	\end{align*}
\end{defn}

\begin{lem}\label{lemma psi estimate}
	Let $s>0$. For every $k_1-k_2+k_3-k_4+k_5-k_6 = 0$,
	\begin{equation}\label{psi estimate}
		| \psi_{2s}(\vec{k}) | \leq C_s  \big(k_{(1)} ^{2(s-1)} ( | \Omega(\vec{k}) |+ k_{(3)} ^2) + k_{(3)}^{2s}\big)
	\end{equation}
	where the constant $C_s>0$ only depends on $s$. As a consequence, we have~: 
	\begin{equation}\label{psi estimate when Omg=0}
		| \Psi^{(0)}_{2s}(\vec{k}) | \leq C_s ( k_{(1)} ^{2(s-1)} \vee k_{(3)}^{2(s-1)}) k_{(3)}^2
	\end{equation}
	and,
	\begin{equation}\label{psi/Omg estimate}
		| \Psi^{(1)}_{2s}(\vec{k}) | \leq C_s \big( k_{(1)} ^{2(s-1)} +  (k_{(1)} ^{2(s-1)} \vee k_{(3)}^{2(s-1)}) \frac{k_{(3)}^2}{|\Omgk|} \big)
	\end{equation}
	where the symbol $\vee$ denotes the max. Moreover, the analogous statements hold replacing $\psi_{2s}$, $\Psi^{(0)}_{2s}$, $\Psi^{(1)}_{2s}$, and $\Omega$ by their degenerated version $\psi^\downarrow_{2s}$, $\Psi^{(0),\downarrow}_{2s}$, $\Psi^{(1),\downarrow}_{2s}$, and $\Omega^\downarrow$.
\end{lem}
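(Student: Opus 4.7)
The plan is to reduce the estimate to the one-variable inequality
\begin{equation*}
|a^{2s} - b^{2s}| \leq C_s\, a^{2(s-1)}(a^2 - b^2), \qquad 0 \leq b \leq a, \, s > 0,
\end{equation*}
which, after dividing by $a^{2s}$ (and setting $t = b/a$), amounts to the boundedness on $[0,1]$ of $g(t) := (1-t^{2s})/(1-t^2)$. This holds because $g$ is continuous on $[0,1)$ and satisfies $g(t) \to s$ as $t \to 1^-$ (by a Taylor expansion of numerator and denominator), so $g$ extends continuously to the compact interval $[0,1]$ and is therefore bounded by a constant depending only on $s$.

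With this inequality in hand, I let $p \neq q$ in $\{1,\ldots,6\}$ be indices with $|k_p| = k_{(1)}$ and $|k_q| = k_{(2)}$, and split according to whether the signs $\varepsilon_p := (-1)^{p-1}$ and $\varepsilon_q := (-1)^{q-1}$ agree. In the \emph{opposite-sign} case $\varepsilon_p = -\varepsilon_q$, I decompose
\begin{align*}
\psi_{2s}(\vec k) &= \varepsilon_p\big(|k_p|^{2s} - |k_q|^{2s}\big) + R_\psi, &
\Omega(\vec k) &= \varepsilon_p(k_p^2 - k_q^2) + R_\Omega,
\end{align*}
with $|R_\psi| \leq 4 k_{(3)}^{2s}$ and $|R_\Omega| \leq 4 k_{(3)}^2$. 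Applying the elementary inequality with $a = k_{(1)}, b = k_{(2)}$ and using $k_{(1)}^2 - k_{(2)}^2 \leq |\Omega(\vec k)| + 4k_{(3)}^2$ then yields \eqref{psi estimate} directly.

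The \emph{same-sign} case $\varepsilon_p = \varepsilon_q$ is the main subtlety: here $|k_p|^{2s}$ and $|k_q|^{2s}$ do not cancel, but precisely because the two largest squares now carry the same sign in $\Omega$ as well, we have $|\Omega(\vec k)| \geq k_{(1)}^2 + k_{(2)}^2 - 4 k_{(3)}^2 \geq k_{(1)}^2 - 4 k_{(3)}^2$. I split on whether $k_{(1)}^2 \geq 8 k_{(3)}^2$: if so, then $|\Omega(\vec k)| \geq k_{(1)}^2/2$ and the trivial bound $|\psi_{2s}(\vec k)| \leq 6 k_{(1)}^{2s}$ gives $|\psi_{2s}(\vec k)| \leq 12\, k_{(1)}^{2(s-1)}|\Omega(\vec k)|$; otherwise $k_{(1)} \leq \sqrt{8}\, k_{(3)}$ and $|\psi_{2s}(\vec k)| \leq 6 k_{(1)}^{2s} \leq C_s k_{(3)}^{2s}$. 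Either way \eqref{psi estimate} holds.

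The consequences \eqref{psi estimate when Omg=0} and \eqref{psi/Omg estimate} follow algebraically from \eqref{psi estimate}: for the former, setting $\Omega(\vec k) = 0$ and writing $k_{(3)}^{2s} = k_{(3)}^{2(s-1)} k_{(3)}^2$ combines the two summands into $C_s(k_{(1)}^{2(s-1)} \vee k_{(3)}^{2(s-1)}) k_{(3)}^2$; for the latter, dividing by $|\Omega(\vec k)|$ on $\{\Omega(\vec k) \neq 0\}$ and using the same comparison yields the claimed bound. The degenerated statements will follow by running the identical case analysis on the $4$-tuple $(k_1,\ldots,k_4)$, or equivalently by specializing $k_5 = k_6$ in all the quantities involved. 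The only genuine obstacle is the same-sign case: one must notice that the sign configuration preventing cancellation in $\psi_{2s}$ is precisely the configuration forcing $|\Omega|$ to absorb the large contribution $k_{(1)}^2$, so the two awkward scenarios never coexist.
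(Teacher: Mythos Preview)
Your proof is correct and follows essentially the same approach as the paper: the same opposite-sign/same-sign case split, the same sub-split in the same-sign case according to whether $k_{(1)}$ dominates $k_{(3)}$, and the same algebraic derivation of the two consequences. The only cosmetic difference is that you prove the one-variable inequality $|a^{2s}-b^{2s}| \leq C_s\, a^{2(s-1)}(a^2-b^2)$ via boundedness of $g(t)=(1-t^{2s})/(1-t^2)$ on $[0,1]$, whereas the paper uses the mean value theorem (which yields $k_{(1)}^{2(s-1)} \vee k_{(2)}^{2(s-1)}$ and then invokes $k_{(1)} \sim k_{(2)}$ from the linear constraint); your route is slightly more direct since it lands on $k_{(1)}^{2(s-1)}$ without that extra step.
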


\begin{proof} We prove Lemma~\ref{lemma psi estimate} only for the original functions $\psi_{2s}$, $\Psi^{(0)}_{2s}$, $\Psi^{(1)}_{2s}$, and $\Omega$, as the proof for the associated degenerated functions is identical. Moreover, we only establish~\eqref{psi estimate}, as~\eqref{psi estimate when Omg=0} and~\eqref{psi/Omg estimate} follow from it.\\
	Thanks to the symmetries, we only need to consider the two following cases : when $k_{(1)}=k_1$, $k_{(2)}=k_2$ and when $k_{(1)}=k_1$, $k_{(2)}=k_3$. \\
	
	\underline{Case 1 :} When $k_{(1)}=k_1$ and $k_{(2)}=k_2$. In this case we use the mean value theorem as follows:
	\begin{multline*}
		|\psi_{2s}(\vec{k})|  \leq k_1^{2s} - k_2^{2s} + |k_3^{2s}-k_4^{2s}+k_5^{2s}-k_6^{2s}|
		\leq \sup_{t \in [k_2^2,k_1^2]} \frac{d}{dt}( t^s) (k_1^2 - k_2^2) + 4 k_{(3)}^{2s} \\
		\leq s (k_{(1)}^{2(s-1)}\vee k_{(2)}^{2(s-1)})(\Omega(\vec{k}) - k_3^2+k_4^2-k_5^2+k_6^2)  + 4 k_{(3)}^{2s}
		\leq  C_s  \big(k_{(1)} ^{2(s-1)} ( | \Omega(\vec{k}) |+ k_{(3)} ^2) + k_{(3)}^{2s}\big)
	\end{multline*}
	where we have used the fact that $|k_{(2)}| \sim |k_{(1)}|$ from the constraint $\linc$. \\
	
	\underline{Case 2} : When $k_{(1)}=k_1$ and $k_{(2)}=k_3$. Let us consider two subcases. Firstly, if $|k_{(3)}| \sim |k_{(1)}|$, then we can simply write:
	\begin{equation*}
		|\psi_{2s}(\vec{k})| \leq 6 k_{(1)}^{2s} \leq C_s k_{(3)}^{2s} \leq C_s  \big(k_{(1)} ^{2(s-1)} ( | \Omega(\vec{k}) |+ k_{(3)} ^2) + k_{(3)}^{2s}\big)
	\end{equation*}
	Secondly, if $|k_{(3)}| \ll |k_{(1)}|$, then, since in addition $k_{(1)}=k_1$ and $k_{(2)}=k_3$, we have:
	\begin{equation*}
		\Omgk = k_{(1)}^2 + k_{(2)}^2 + o(k_{(1)}^2) \geq \frac{1}{2} k_{(1)}^2
	\end{equation*} 
	and it implies that
	\begin{equation*}
		|\psi_{2s}(\vec{k})| \leq 6 k_{(1)}^{2s} \leq 12 k_{(1)}^{2(s-1)}|\Omgk| \leq C_s  \big(k_{(1)} ^{2(s-1)} ( | \Omega(\vec{k}) |+ k_{(3)} ^2) + k_{(3)}^{2s}\big)
	\end{equation*}  
	This completes the proof of Lemma~\ref{lemma psi estimate}.
\end{proof}

\subsection{Probabilistic toolbox} In this paragraph, we present the two main probabilistic tools that we will use for the energy estimates in Section~\ref{section Exponential integrability}. The first one is often referred to as the \textit{square-root cancellation}. The second one is the Boué-Dupuis variational formula.

\begin{lem}\label{lem L2 estimate with orthogonality} 
	Let $(\Omega,\cF,\P)$ be a probability space. Let $m\in \N$, $\iota_1,...,\iota_m \in \{-,+ \}$, and $g_{k_1},...,g_{k_m}$ be complex standard i.i.d Gaussians. We consider the following multi-linear expression of Gaussians:
	\begin{equation}\label{mult lin gauss}
		G(\omega) := \sum_{\substack{k_1,...,k_m \\ \forall \iota_i \neq \iota_j, \hsp k_i \neq k_j}} c_{k_1,...,k_m} \cdot \prod_{j=1}^m g_{k_j}^{\iota_j}(\omega), \hspace{1cm} \omega \in \Omega 
	\end{equation}
	where $c_{k_1,...,k_m}$ is a sequence of $l^2(\Z^m;\C)$. Then, there exists $C>0$ such that:
	\begin{equation}\label{ineq L2 estimate orhtogonality}
		\norm{G}_{L^2(\Omega)} \leq C \biggl( \sum_{\substack{k_1,...,k_m \\ \forall \iota_i \neq \iota_j, \hsp k_i \neq k_j}} | c_{k_1,...,k_m} |^2 \biggr)^{\frac{1}{2}}
	\end{equation}
\end{lem}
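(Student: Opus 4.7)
The plan is to compute $\|G\|_{L^2(\Omega)}^2 = \E[|G|^2]$ by direct expansion and then invoke Isserlis' theorem (Wick's formula) for complex Gaussians, exploiting the non-pairing restriction to suppress all but a controlled family of terms.

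First, I would write
\begin{equation*}
	\E[|G|^2] = \sum_{\vec{k}, \vec{k}'} c_{\vec{k}}\, \overline{c_{\vec{k}'}} \cdot \E\Bigl[ \prod_{j=1}^{m} g_{k_j}^{\iota_j} \prod_{j=1}^m \overline{g_{k'_j}^{\iota_j}}\, \Bigr],
\end{equation*}
where $\vec{k} = (k_1, \dots, k_m)$, similarly for $\vec{k}'$, and both sums are subject to the restriction appearing in~\eqref{mult lin gauss}. For independent complex standard Gaussians one has $\E[g_k \overline{g_\ell}] = \delta_{k\ell}$ and $\E[g_k g_\ell] = 0$. Using $\overline{g^{\iota}} = g^{-\iota}$, Isserlis' theorem expresses each inner expectation as the sum, over all complete pairings of the $2m$ factors, of the products of the pair-expectations; only pairings matching one $g^{+}$ with one $g^{-}$ \emph{at the same index} contribute.

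The crucial observation is that the hypothesis $\iota_i \neq \iota_j \Rightarrow k_i \neq k_j$ rules out all \emph{internal} pairings inside $\prod_{j=1}^m g_{k_j}^{\iota_j}$, and symmetrically inside the conjugate product: an internal pair would require distinct indices $i,j$ with $\iota_i = -\iota_j$ and $k_i = k_j$, which is exactly what the restriction in~\eqref{mult lin gauss} forbids. Therefore only \emph{cross-pairings} survive, and they are parametrized by permutations $\tau \in \mathfrak{S}_m$ such that $\iota_j = \iota_{\tau(j)}$ and $k'_{\tau(j)} = k_j$ for every $j = 1, \dots, m$.

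For each such $\tau$, the corresponding contribution reduces to a single sum of the form $\sum_{\vec{k}} c_{\vec{k}}\, \overline{c_{\tau \cdot \vec{k}}}$, where $\tau \cdot \vec{k}$ denotes the reindexed tuple; by Cauchy--Schwarz this is bounded by $\|c\|_{\ell^2}^2$, since permuting indices preserves the $\ell^2$ norm. Summing over the at most $m!$ admissible permutations yields $\E[|G|^2] \leq m!\, \|c\|_{\ell^2}^2$, which gives~\eqref{ineq L2 estimate orhtogonality} with $C = \sqrt{m!}$. The only (mild) obstacle is the bookkeeping to confirm that only sign-pattern-preserving cross-pairings survive and to track how the index identifications re-express each surviving term as a permutation of the $c_{\vec{k}}$ sequence; the remainder is a routine Gaussian moment calculation.
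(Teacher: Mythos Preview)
Your proof is correct and follows essentially the same approach the paper indicates: the paper does not give a detailed argument but only notes that the proof rests on independence together with the moment identity $\E[g^k\overline{g}^{\,l}]=0$ for $k\neq l$ (referring to \cite{knez24transportlowregularitygaussian} for details when $m=3$), and your use of Isserlis' theorem is precisely a clean, systematic packaging of that computation. Your observation that the non-pairing hypothesis kills all internal pairings, leaving only signature-preserving cross-permutations and hence the bound $C=\sqrt{p!\,q!}\leq\sqrt{m!}$, is exactly the mechanism intended.
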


\begin{rem}
	The proof of this lemma relies on the independence and the fact that if $g$ is a complex standard Gaussian random variable, then for every $k,l \in \N$, $\E[g^k \cjg{g}^l] = 0$ if $k \neq l$. For a detailed proof (in the case $m=3$) we refer to \cite{knez24transportlowregularitygaussian}.
\end{rem}

\begin{rem}
	When $\iota_i \neq \iota_j$  \textbf{and} $\hsp k_i = k_j$, we say that $k_i$ and $k_j$ are paired. If one allows such parings in the sum in \eqref{mult lin gauss}, then the inequality \eqref{ineq L2 estimate orhtogonality} does not hold anymore in general. This will force us later to treat separately such pairing contributions.
\end{rem}

In this paper, the key element in order to obtain exponential integrability for the energy functionals (defined in the forthcoming Section~\ref{section truncated system Poinacaré-Dulac and energy estimates}) is the Boué-Dupuis variational formula introduced in \cite{boue_dupuis98}. See also the related paper by Üstünel \cite{ustunel14}. This approach was adopted by Barashkov and Gubinelli in \cite{barashkov_gubinelli20}, and it has since been applied in the context of quasi-invariance in \cite{forlano_and_soeng2022transport}, \cite{forlano2022quasi},\cite{coe2024sharp}. More precisely, we will use the following simplified version of the Boué-Dupuis formula, which has been stated and proven in Lemma 2.6 of \cite{forlano2022quasi}. 

\begin{lem}[see Lemma 2.6 of \cite{forlano2022quasi}]\label{lem boue-dupuis}
	Let $s\in \R$ and $N \in \N$. Let $\cF:C^\infty(\T)\ra\R$ be a measurable function such that $\E[|\cF_-(\pi_N \phi^\omega)|^p]~< \infty$, for some $p>1$, where $\cF_-$ denotes the negative part of $\cF$. Then,
	\begin{equation}\label{ineq boue dupuis}
		\log \int e^{\cF(\pi_N u)} d\mu_s	\leq \E \big[\sup_{V \in H^s} \{ \cF(\pi_N \phi^\omega + \pi_N V) - \frac{1}{2}\norm{V}^2_{H^s} \} \big]
	\end{equation}
	where we recall that $\phi^\omega$ is the random variable in~\eqref{phi omega}, whose law is $\mu_s$.
\end{lem}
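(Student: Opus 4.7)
\medskip

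\noindent \textbf{Proof proposal for Lemma~\ref{lem boue-dupuis}.} The plan is to realize $\phi^\omega$ as the terminal value of a continuous martingale in $H^s$ and then invoke the standard Boué–Dupuis variational formula for functionals of a Brownian motion. Concretely, on an enlarged probability space, let $\{B_n(t)\}_{n\in\Z,t\in[0,1]}$ be a family of independent complex standard Brownian motions with $B_n(1)\stackrel{d}{=}g_n$, and define the $H^s$–valued continuous martingale
\begin{equation*}
	W(t) := \sum_{n\in\Z} \frac{B_n(t)}{\langle n\rangle^s}\, e^{inx}, \qquad t\in[0,1],
\end{equation*}
so that $W(1)$ has law $\mu_s$. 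The classical Boué–Dupuis formula (as stated e.g.\ in Üstünel~\cite{ustunel14} and adapted to this infinite-dimensional setting in Barashkov–Gubinelli~\cite{barashkov_gubinelli20}) asserts that for any measurable $G: H^s \to \R \cup \{+\infty\}$ satisfying the integrability hypothesis on $G_-(W(1))$,
\begin{equation*}
	-\log \E\bigl[e^{-G(W(1))}\bigr]
	= \inf_{\theta} \E\Bigl[ G\bigl(W(1) + \textstyle\int_0^1 \theta(t)\,dt\bigr) + \tfrac{1}{2}\int_0^1 \|\theta(t)\|_{H^s}^2\, dt\Bigr],
\end{equation*}
where the infimum is taken over progressively measurable $H^s$-valued drifts $\theta$.

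\medskip

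\noindent I would apply this formula to $G:=-\cF\circ\pi_N$: the hypothesis $\E[|\cF_-(\pi_N\phi^\omega)|^p]<\infty$ guarantees that $G_-(W(1))$ is integrable, so the right-hand side is well-defined and the variational identity becomes
\begin{equation*}
	\log\int e^{\cF(\pi_N u)}\,d\mu_s
	= \sup_{\theta}\, \E\Bigl[ \cF\bigl(\pi_N W(1) + \pi_N\!\textstyle\int_0^1 \theta(t)\,dt\bigr) - \tfrac{1}{2}\int_0^1 \|\theta(t)\|_{H^s}^2\, dt\Bigr].
\end{equation*}
The next step is the \emph{pathwise reduction} from time-dependent drifts $\theta$ to a single random element $V\in H^s$. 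For each admissible $\theta$, set $V:=\int_0^1 \theta(t)\,dt$; Cauchy–Schwarz in time yields
\begin{equation*}
	\|V\|_{H^s}^2 \leq \int_0^1 \|\theta(t)\|_{H^s}^2\, dt,
\end{equation*}
so dropping the excess in the kinetic term only enlarges the quantity inside the expectation:
\begin{equation*}
	\cF\bigl(\pi_N W(1) + \pi_N V\bigr) - \tfrac{1}{2}\int_0^1 \|\theta(t)\|_{H^s}^2\, dt
	\;\leq\; \cF\bigl(\pi_N W(1) + \pi_N V\bigr) - \tfrac{1}{2}\|V\|_{H^s}^2.
\end{equation*}

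\medskip

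\noindent Finally, bounding the right-hand side pointwise in $\omega$ by the pathwise supremum over \emph{all} $V\in H^s$ and then taking the expectation gives
\begin{equation*}
	\sup_{\theta}\, \E\Bigl[\cF\bigl(\pi_N W(1) + \pi_N\!\textstyle\int_0^1\!\theta\,dt\bigr) - \tfrac{1}{2}\!\int_0^1 \|\theta\|_{H^s}^2\, dt\Bigr]
	\;\leq\;\E\Bigl[\sup_{V\in H^s}\bigl\{\cF(\pi_N\phi^\omega+\pi_N V)-\tfrac{1}{2}\|V\|_{H^s}^2\bigr\}\Bigr],
\end{equation*}
which combined with the variational identity above is exactly~\eqref{ineq boue dupuis}. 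Two technical points deserve care and constitute the main obstacles. First, measurability of $\omega\mapsto\sup_V\{\cF(\pi_N\phi^\omega+\pi_N V)-\tfrac12\|V\|_{H^s}^2\}$; here the projector $\pi_N$ is essential: since the functional depends on $V$ only through $\pi_N V$ and since $\|\pi_N V\|_{H^s}\leq \|V\|_{H^s}$, the supremum may be restricted to the finite-dimensional subspace $\pi_N H^s$, on which measurability is standard. Second, justification of the classical Boué–Dupuis identity in the present infinite-dimensional Gaussian framework with only the one-sided integrability assumption on $\cF_-$; this is handled, as in~\cite{forlano2022quasi}, by truncating $\cF$ from above, applying the formula in the bounded setting, and passing to the limit via monotone/Fatou arguments controlled by the $L^p$ bound on $\cF_-(\pi_N\phi^\omega)$.
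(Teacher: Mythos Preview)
The paper does not supply its own proof of this lemma; it is quoted verbatim from \cite{forlano2022quasi} (Lemma~2.6) and used as a black box. Your outline is the standard route to such a simplified Boué--Dupuis inequality and is essentially correct.

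One technical slip: the process $W(t)=\sum_n \langle n\rangle^{-s}B_n(t)e^{inx}$ is \emph{not} an $H^s$-valued martingale, since $\|W(1)\|_{H^s}^2=\sum_n|g_n|^2=+\infty$ almost surely; it only lives in $H^\sigma$ for $\sigma<s-\tfrac12$. This does not damage your argument, because the functional factors through $\pi_N$ and the drifts live in the Cameron--Martin space $H^s$ regardless of where $W$ takes values. The cleanest fix is to either (i) state the Boué--Dupuis identity for the cylindrical Wiener process on $H^s$ with sample paths in $H^\sigma$, as in \cite{barashkov_gubinelli20}, or (ii) observe from the outset that since $\cF\circ\pi_N$ depends only on the $2N{+}1$ coordinates $(B_n(1))_{|n|\le N}$, the whole argument reduces to the classical finite-dimensional Boué--Dupuis formula for $\C^{2N+1}$-valued Brownian motion, where your Cauchy--Schwarz step $\|V\|_{H^s}^2\le\int_0^1\|\theta(t)\|_{H^s}^2\,dt$ and the subsequent pathwise supremum are unproblematic. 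Route (ii) also makes your measurability remark immediate.
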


	\section{Poincaré-Dulac normal form reduction and energy estimates}\label{section truncated system Poinacaré-Dulac and energy estimates}\subsection{Truncated system}For $N\in \N$, we work with the following equation :
\begin{equation}\label{truncated equation}
	\begin{cases}
		i\p_t u + \p_x^2 u = \pi_N \left(|\pi_Nu|^4\pi_Nu \right), \hspace{0.2cm} (t,x) \in \R \times \T \\
		u|_{t=0} = u_0 
	\end{cases}
\end{equation}
that we call the \textit{truncated equation}, where we recall that $\pi_N$ is the projector on frequencies $\leq N$. We also define $\pi_N^\perp:=\text{Id}-\pi_N$, the projector on frequencies $> N$. More explicitly,
\begin{align*}
	\pi_N \big( \sum_{k \in \Z} u_k e^{ikx} \big) &= \sum_{|k| \leq N} u_k e^{ikx} & &\textnormal{and} & 	\pi_N^{\perp} \big( \sum_{k \in \Z} u_k e^{ikx} \big) &= \sum_{|k| > N} u_k e^{ikx}
\end{align*} 
We denote by $\Phi^N$ the flow of \eqref{truncated equation}, that we call the \textit{truncated flow}. We also recall that we denote by $\Phi$ the flow of~\eqref{NLS}. The truncated flow $\Phi^N$ can be factorized: 
\begin{prop}[Factorization of the truncated flow, see Appendix of \cite{knez24transportlowregularitygaussian}]\label{prop factorization of the truncated flow}
	Let $N \in \N$, $s \in \R$, and $\sigma < s-\frac{1}{2}$.  Set,
	\begin{align*}
		E_N &:= \pi_N \hsigt, & E_N^{\perp} := \pi_N^{\perp}\hsigt = (\textnormal{Id} -\pi_N) \hsigt
	\end{align*}
	We identify $\hsigt$ with the product space $E_N \times E_N^\perp$ through the isomorphism:
	\begin{equation*}
		\begin{split}
			E_N &\times E_N^\perp \lra \hsigt \\
			 (w&,v) \longmapsto w+v
		\end{split}
	\end{equation*}
	 Then, for every $t \in \R$, the truncated flow $\Phi^N(t)$ can be factorized as $(\tld{\Phi}^N(t),e^{it\p_x^2})$ on $E_N \times E_N^{\perp}$ in the sense that:
	\begin{equation*}\label{factorization of the truncated flow}
		\begin{split}
			\Phi^N(t) : \hsp&E_N \times E_N^{\perp} \lra  E_N \times E_N^{\perp}  \\
			& (w,v) \longmapsto (\tld{\Phi}^N(t)(w),e^{it\p_x^2}v)
		\end{split}
	\end{equation*}
	where $e^{it\p_x^2}$ is the propagator of the linear Schrödinger equation~\eqref{LS}, and where $\tld{\Phi}^N$ is the flow on $E_N$ of the following finite dimensional ordinary differential equation:
	\begin{equation}\label{FNLS}
		\begin{cases}
			i\p_t w + \p_x^2 w = \pi_N \left(|w|^4w \right) \\
			u|_{t=0} = w_0 \in E_N
		\end{cases}
		\tag{FNLS}
	\end{equation}
\end{prop}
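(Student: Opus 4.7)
The plan is to decompose any solution of~\eqref{truncated equation} along the orthogonal projections $\pi_N$ and $\pi_N^\perp$ and observe that the low- and high-frequency components decouple. Since $\pi_N$ and $\pi_N^\perp$ are Fourier multipliers, they commute with $\p_x^2$; moreover, the nonlinearity $\pi_N(|\pi_N u|^4 \pi_N u)$ depends only on $\pi_N u$ and takes values in $E_N$. Hence, if $u$ solves~\eqref{truncated equation}, then applying $\pi_N$ and $\pi_N^\perp$ to both sides yields, for $w := \pi_N u$ and $v := \pi_N^\perp u$,
\begin{equation*}
i\p_t w + \p_x^2 w = \pi_N(|w|^4 w), \qquad i\p_t v + \p_x^2 v = 0,
\end{equation*}
with initial data $w(0) = \pi_N u_0 \in E_N$ and $v(0) = \pi_N^\perp u_0 \in E_N^\perp$. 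The second equation forces $v(t) = e^{it\p_x^2} v_0$, while the first is precisely~\eqref{FNLS}.

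Next, I would establish the well-posedness of~\eqref{FNLS} on $E_N$. Since $E_N \cong \C^{2N+1}$ is finite-dimensional and the vector field $w \mapsto -i\p_x^2 w - i\pi_N(|w|^4 w)$ is polynomial, the Cauchy-Lipschitz theorem gives a local flow $\tld{\Phi}^N$ on $E_N$. Taking the $L^2$-pairing of~\eqref{FNLS} with $\bar{w}$ and extracting the imaginary part (using that $\bar{w} \in E_N$ so the outer $\pi_N$ can be discarded in the pairing) yields conservation of $\| w \|_{L^2}^2$. Because all norms on the finite-dimensional space $E_N$ are equivalent, this uniform $L^2$-bound controls every $\hsig$-norm of $w$ on compact time intervals and rules out blowup, producing a globally defined flow $\tld{\Phi}^N(t) : E_N \ra E_N$ for every $t \in \R$.

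To assemble the factorization, define $\Phi^N(t)(w_0 + v_0) := \tld{\Phi}^N(t) w_0 + e^{it\p_x^2} v_0$ on $\hsigt = E_N \oplus E_N^\perp$. By the computations above, this function solves~\eqref{truncated equation} with datum $u_0 = w_0 + v_0$; conversely, any other solution must satisfy the same decoupled system after projection, hence by uniqueness for each component it coincides with $\Phi^N(t) u_0$. This simultaneously yields global well-posedness of~\eqref{truncated equation} in $\hsigt$ and the desired factorization. The argument has no substantial obstacle: the only point requiring a little care is the algebraic identity $\pi_N^\perp \pi_N = 0$, combined with the fact that the nonlinearity is already a low-frequency function depending only on $\pi_N u$, which together guarantee that the two projections genuinely decouple the dynamics.
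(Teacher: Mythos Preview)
Your argument is correct and follows the standard route: project the truncated equation onto $E_N$ and $E_N^\perp$, observe that the nonlinearity $\pi_N(|\pi_N u|^4\pi_N u)$ depends only on $\pi_N u$ and lies in $E_N$, solve the decoupled systems separately (Cauchy--Lipschitz plus $L^2$-conservation on the finite-dimensional piece, the linear propagator on the high-frequency piece), and reassemble. The paper does not actually prove this proposition here; it only cites the appendix of \cite{knez24transportlowregularitygaussian}, so there is no in-paper proof to compare against, but your approach is exactly the one a reader would reconstruct and matches what that reference contains.
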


 Moreover, the truncated flow $\Phi^N$ is an approximation of the original $\Phi$ in the following sense:

\begin{prop}[Approximation of the flow by the truncated flow]\label{prop approx Phi by PhiN set version} Let $\sigma>\frac{2}{5}$ and $K$ a compact subset of $\hsigt$. Then, \\
-- If $T>0$, we have that for any $\eps > 0$, there exists $N_{\eps,K,T} \in \N$ such that for all $N\geq N_{\eps,T,K}$, and every $|t|\leq T$:
\begin{equation}\label{PhiN K included PhiK + B}
	\Phi^N(t)(K) \subset \Phi(t)(K) + \hsigball{\eps}
\end{equation}
-- If $t \in \R$, we have that for any $\eps > 0$, there exists $N_{\eps,K,t} \in \N$ such that for all $N\geq N_{\eps,K,t}$:
\begin{equation}\label{PhiK included PhiN(K+B)}
	\Phi(t)(K) \subset \Phi^N(t)(K + \hsigball{\eps})
\end{equation}
\end{prop}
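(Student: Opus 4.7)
The plan is to reduce both inclusions to a single deterministic approximation statement: for every $T > 0$ and every compact $K \subset \hsigt$,
\[
\sup_{|t| \le T}\,\sup_{u_0 \in K}\, \| \Phi^N(t) u_0 - \Phi(t) u_0 \|_{\hsig} \lra 0 \quad \text{as } N \to \infty.
\]
This rests on two ingredients from the $\hsigt$ well-posedness theory recalled in the Appendix. The first is pointwise convergence $\|\Phi^N(t) u_0 - \Phi(t) u_0\|_{\hsig} \to 0$ uniformly on $[-T, T]$ for each fixed $u_0$, obtained by writing the Duhamel difference $u - u^N$ in the Bourgain space $X^{\sigma, b}$ used by Li-Wu-Xu, bounding the source term in terms of $F(u) - F_N(u)$, and using the strong convergence $\pi_N \to \mathrm{Id}$. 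The second is a Lipschitz bound for $\Phi^N$ on $\hsig$-balls over intervals whose length only depends on the size of the datum, with a constant uniform in $N$ (since $\pi_N$ is uniformly bounded on $X^{\sigma, b}$); iterating this over $[-T, T]$ yields a joint modulus of continuity for $(t, u_0) \mapsto \Phi^N(t) u_0$ that is uniform in $N$ on any bounded subset of $\hsig$.

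Granted this uniform convergence, the first inclusion~\eqref{PhiN K included PhiK + B} follows by a standard compactness/contradiction argument. Assume it fails: there exist $\eps > 0$ and sequences $N_k \to \infty$, $t_k \in [-T, T]$, $u_0^k \in K$ with $\|\Phi^{N_k}(t_k) u_0^k - \Phi(t_k) u_0^k\|_{\hsig} \ge \eps$. After extracting subsequences, $u_0^k \to u_0^\infty$ in $\hsig$ and $t_k \to t^\infty \in [-T, T]$. I decompose
\[
\Phi^{N_k}(t_k) u_0^k - \Phi(t_k) u_0^k = \bigl[\Phi^{N_k}(t_k) u_0^k - \Phi^{N_k}(t^\infty) u_0^\infty\bigr] + \bigl[\Phi^{N_k}(t^\infty) u_0^\infty - \Phi(t^\infty) u_0^\infty\bigr] + \bigl[\Phi(t^\infty) u_0^\infty - \Phi(t_k) u_0^k\bigr];
\]
the middle bracket vanishes by pointwise convergence, the last by continuity of $\Phi$ in $(t, u_0)$, and the first by the $N$-uniform equicontinuity of the $\Phi^{N_k}$ at $(t^\infty, u_0^\infty)$, contradicting the lower bound $\eps$.

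For~\eqref{PhiK included PhiN(K+B)}, I would exploit the invertibility of the truncated flow: given $v_0 \in K$, set $u_0^N := \Phi^N(-t)\Phi(t) v_0$, so that $\Phi^N(t) u_0^N = \Phi(t) v_0$ by construction, and it suffices to show $\|u_0^N - v_0\|_{\hsig} < \eps$ uniformly in $v_0 \in K$ for $N$ large. Writing
\[
u_0^N - v_0 = \Phi^N(-t)\bigl[\Phi(t) v_0\bigr] - \Phi^N(-t)\bigl[\Phi^N(t) v_0\bigr],
\]
I invoke the uniform convergence proved in the first step (applied to $K$ at the fixed time $t$): the difference $\Phi(t) v_0 - \Phi^N(t) v_0$ tends to $0$ in $\hsig$ uniformly in $v_0 \in K$, and both arguments of $\Phi^N(-t)$ lie, for $N$ large, in the bounded set $\Phi(t)(K) + \hsigball{1}$; the $N$-uniform Lipschitz bound for $\Phi^N(-t)$ on this bounded set then delivers the required estimate.

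The main obstacle is the first step: producing Lipschitz constants and convergence rates for $\Phi^N$ that are independent of $N$. Existence and globality of $\Phi^N$ are immediate from Proposition~\ref{prop factorization of the truncated flow}, but the delicate point is that the multilinear estimates for $F$ in $X^{\sigma, b}$ underlying the Li-Wu-Xu well-posedness must remain valid for $F_N$ with constants independent of $N$. This stability is ensured by the $L^p$-boundedness of $\pi_N$ uniformly in $N$, and its verification is the content of the Appendix.
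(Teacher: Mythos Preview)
Your approach is correct and leads to the same conclusion, but it differs from the paper's in two places worth noting.

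For the uniform approximation statement itself, the paper does not go through pointwise convergence plus equicontinuity. Instead it works directly with the contraction structure: writing $\Phi_{u_0}-\Phi^N_{u_0}=\Gamma_{u_0}(\Phi_{u_0})-\Gamma^N_{u_0}(\Phi^N_{u_0})$ on a short interval, absorbing the $\Gamma^N$-Lipschitz part, and reducing to $\|\pi_N^\perp \cD(\Phi_{u_0})\|_{X}+\|\pi_N^\perp \Phi_{u_0}\|_{X}$. Uniformity over $u_0\in K$ then comes from the elementary fact that a uniformly bounded sequence of linear operators converging pointwise converges uniformly on compact sets. This avoids having to state a separate $N$-uniform Lipschitz/equicontinuity result for $\Phi^N$ as an intermediate step; the only global input is the bound~\eqref{Phi B included B} for $\Phi$ (not $\Phi^N$), and the control of $\Phi^N$ is bootstrapped from the approximation already obtained on earlier subintervals. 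Your route works, but note that the equicontinuity you invoke needs $\Phi^N(t)u_0$ to stay in an $N$-independent ball on $[-T,T]$, which is not immediate for $\Phi^N$ and is exactly what the paper obtains by this induction-in-time bootstrap. Also, once the uniform convergence $\sup_{|t|\le T}\sup_{u_0\in K}\|\Phi^N(t)u_0-\Phi(t)u_0\|_{\hsig}\to 0$ is granted, \eqref{PhiN K included PhiK + B} is immediate from $\Phi^N(t)u_0=\Phi(t)u_0+(\Phi^N(t)u_0-\Phi(t)u_0)$; your compactness/contradiction argument there is redundant.

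For \eqref{PhiK included PhiN(K+B)}, the paper's argument is shorter and cleaner than yours: since $\Phi(t)K$ is compact, apply the already-proved inclusion \eqref{PhiN K included PhiK + B} to it at time $-t$ to get $\Phi^N(-t)(\Phi(t)K)\subset \Phi(-t)\Phi(t)K+\hsigball{\eps}=K+\hsigball{\eps}$, and then apply $\Phi^N(t)$ to both sides. This bypasses entirely the need for a uniform-in-$N$ Lipschitz bound for $\Phi^N(-t)$ on a bounded set.
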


 We provide a proof of this proposition in Appendix~\ref{appendix construction of a global flow and approximation}. \\
 
Later, we will see that (for all $s\in\R$) $\mu_s$ is quasi-invariant under $\Phi^N(t)$. More precisely, we will be able to give an explicit formula for the Radon-Nikodym derivative $\frac{d(\Phi^N(t)_\# \mu_s)}{d\mu_s}$. The key ingredients for transferring the quasi-invariance from $\Phi^N(t)$ to the original flow $\Phi(t)$ is the approximation property above combined with uniform in $N\in \N$ estimates on \textit{energy functionals} which are produced by the following normal form reduction.  

\subsection{Poincaré-Dulac normal form reduction} In this paragraph, we state the result obtained through the normal form reduction performed in~\cite{knez24transportlowregularitygaussian}, which is a slight adaptation of that in~\cite{sun2023quasi}. To that end, we first define the following \textit{energy functionals}:

\begin{defn}\label{def energy functionals}
	Firstly, we define $\cM_s : C^\infty(\T)^6 \ra \R$ and $\cT_s : C^\infty(\T)^6 \ra \R$ the two following multi-linear maps:
	\begin{equation*}
		\cM_s(u_1,...,u_6):= \sum_{\linc} \Psi^{(0)}_{2s}(\vec{k})\widehat{u}_1(k_1)\cjg{\widehat{u}_2}(k_2)...\cjg{\widehat{u}_6}(k_6)
	\end{equation*}
	and,
	\begin{equation*}
		\cT_s(u_1,...,u_6):= \sum_{\linc} \Psi^{(1)}_{2s}(\vec{k}) \widehat{u}_1(k_1)\cjg{\widehat{u}_2}(k_2)...\cjg{\widehat{u}_6}(k_6)
	\end{equation*}
	where we recall that $\Psi^{(0)}_{2s}$ and $\Psi^{(1)}_{2s}$ are defined in Definition~\ref{def Big Psi}.
	Secondly, we consider:
	\begin{equation*}
		\cN_s(u_1,...,u_6) := \cT_s(F(u_1),u_2,...,u_6)
	\end{equation*}
	Finally, for $N\in \N$, we define the truncated version of these maps as~:
	\begin{align*}
		\cT_{s,N}(u_1,...,u_6) &:= \cT_s(\pi_N u_1,...,\pi_N u_6), & \cM_{s,N}(u_1,...,u_6) &:= \cM_s(\pi_N u_1,...,\pi_N u_6)
	\end{align*}
	and,
	\begin{equation*}
		\cN_{s,N}(u_1,...,u_6) := \cT_s(F_N u_1,\pi_Nu_2,...,\pi_Nu_6) 
	\end{equation*}
	where we recall that $F$ and $F_N$ are the non-linearities given in Notation~\ref{notation non-linearities}.
\end{defn}

\begin{rem}
	 Let $N\in \N$ and $\cF_{s,N} \in \{ \cM_{s,N}, \cT_{s,N}, \cN_{s,N} \}$. Then, there exists a unique
	 \begin{equation*}
		(\Psi_{2s},G) \in \{ (\Psi^{(0)}_{2s},id) \}  \cup \{\Psi^{(1)}_{2s} \} \times \{ id, F_N\}
	\end{equation*}	such that:
	\begin{equation*}
		\cF_{s,N}(u_1,...,u_6)= \sum_{\linc} \Psi_{2s}(\vec{k}) \widehat{Gu}_1(k_1)\cjg{\widehat{u}_2}(k_2)...\cjg{\widehat{u}_6}(k_6)
	\end{equation*}
	with the $u_j \in C^\infty(\T)$ satisfying $supp(\widehat{u_j})\subset [-N,N]$. This concise formulation will enable us to prove energy estimates for the three functions $\cM_{s,N}, \cT_{s,N}$, and $\cN_{s,N}$ in a single step.
\end{rem}

\begin{defn}[Energy correction and modified energy]\label{def modified energy and energy correction}
	For $N \in \N$, we define the \textit{energy correction} $R_{s,N}$ as the function :
	\begin{equation}\label{RsN}
		R_{s,N}(u):= \frac{1}{6} \Re \hsp \cT_{s,N}(u)
	\end{equation}
	Moreover, we introduce the \textit{modified energy} $E_{s,N}$ as the function:
	\begin{equation}\label{EsN}
		E_{s,N}(u) := \frac{1}{2} \norm{\pi_N u}_{H^s(\T)}^2 + R_{s,N}(u),
	\end{equation}
\end{defn}

\begin{defn}[Modified energy derivative]\label{def modified energy derivative}
	For $N \in \N$, we define the \textit{modified energy derivative} as the function~:
	\begin{equation*}
		Q_{s,N}(u):= -\frac{1}{6} \Im \hsp \cM_{s,N}(u) + \Im \hsp \cN_{s,N}(u)
	\end{equation*}
\end{defn}

\begin{prop}[Poincaré-Dulac normal form reduction]\label{prop Poincaré Dulac} For all $N\in \N$, and every $u :\T \ra \C$, we have:
	\begin{equation*}
		\frac{d}{dt}E_{s,N}(\Phi^N(t)u)|_{t=0}  = Q_{s,N}(u)
	\end{equation*}
\end{prop}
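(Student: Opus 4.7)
The plan is to differentiate the two pieces of $E_{s,N} = \frac{1}{2}\|\pi_N\cdot\|^2_{H^s} + R_{s,N}$ along the truncated flow and exhibit the exact algebraic cancellation that produces $Q_{s,N}$. Throughout, I denote $P(\vec k) := \hat{u}(k_1)\overline{\hat{u}(k_2)}\hat{u}(k_3)\overline{\hat{u}(k_4)}\hat{u}(k_5)\overline{\hat{u}(k_6)}$ and use the constraint $\linc$.

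\textbf{Step 1: the $H^s$-piece.} Using $i\p_t u = -\p_x^2 u + F_N(u)$ and the fact that the free propagator preserves the $H^s$-norm, a direct computation gives
$$ \frac{d}{dt}\tfrac{1}{2}\|\pi_N \Phi^N(t)u\|^2_{H^s}\Big|_{t=0} = \Im \sum_{|k_6|\leq N} |k_6|^{2s}\,\widehat{F_N(u)}(k_6)\,\overline{\hat u(k_6)}. $$
Expanding $\widehat{F_N(u)}(k_6)$ via the convolution constraint $\linc$ rewrites this as $\Im\sum_{\linc,|k_j|\leq N} |k_6|^{2s}P(\vec k)$. The sextilinear product $P$ is symmetric under permutations within the odd (resp.\ even) positions, and the swap $\sigma = (1\,2)(3\,4)(5\,6)$ sends $P$ to $\bar P$. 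Averaging $|k_6|^{2s}$ over the even positions, its $\sigma$-image over the odd positions, and using $\sum_{\linc}\psi_{2s}(\vec k)P$ being purely imaginary (since $\psi_{2s}\circ\sigma = -\psi_{2s}$), the symmetrization collapses to
$$ \Im\sum_{\linc}|k_6|^{2s}P(\vec k) = -\tfrac{1}{6}\Im\sum_{\linc}\psi_{2s}(\vec k)\,P(\vec k). $$
Splitting $\psi_{2s}=\mathbb{1}_{\Omgz}\psi_{2s}+\Omgk\cdot\Psi^{(1)}_{2s}$ then yields
$$ \frac{d}{dt}\tfrac{1}{2}\|\pi_N\Phi^N(t)u\|^2_{H^s}\Big|_{t=0} = -\tfrac{1}{6}\Im\,\cM_{s,N}(u)\;-\;\tfrac{1}{6}\Im\sum_{\Omgnz}\psi_{2s}(\vec k)P(\vec k). $$

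\textbf{Step 2: the correction $R_{s,N}$.} Writing $R_{s,N} = \frac{1}{6}\Re\,\cT_{s,N}(u)$ and differentiating factor by factor, one splits $\partial_t \hat u(k_j) = -ik_j^2\hat u(k_j) - i\widehat{F_N(u)}(k_j)$ into a linear and a nonlinear contribution. Summing the six linear contributions with the proper signs produces a global factor $-i\,\Omgk$ inside the multilinear form, which cancels the denominator in $\Psi^{(1)}_{2s}$; taking $\frac{1}{6}\Re(-i\,\cdot\,)$ recovers exactly $+\frac{1}{6}\Im\sum_{\Omgnz}\psi_{2s}(\vec k)P(\vec k)$, killing the second term of Step 1. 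For the nonlinear part, the contribution of each odd position $j$ equals $-i\,\cN_{s,N}(u)$ (using the symmetry of $\Psi^{(1)}_{2s}$ under permutations of odd indices), while each even-position contribution equals $+i\,\overline{\cN_{s,N}(u)}$ (via the $\sigma$-swap together with $\Psi^{(1)}_{2s}\circ\sigma = \Psi^{(1)}_{2s}$, which follows from $\psi_{2s}\circ\sigma = -\psi_{2s}$ and $\Omega\circ\sigma=-\Omega$). Summing the six contributions gives $6\,\Im\,\cN_{s,N}(u)$, whence $\frac{1}{6}\Re$ of it is $\Im\,\cN_{s,N}(u)$.

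\textbf{Conclusion and main difficulty.} Adding the two computations above, the non-resonant pieces cancel identically and one is left with
$$ \frac{d}{dt}E_{s,N}(\Phi^N(t)u)\Big|_{t=0} = -\tfrac{1}{6}\Im\,\cM_{s,N}(u) + \Im\,\cN_{s,N}(u) = Q_{s,N}(u). $$
The delicate point is purely bookkeeping: tracking signs produced by each $(-1)^{j-1}$, keeping the odd/even positions aligned with $\hat u$ versus $\overline{\hat u}$, and verifying the $\sigma$-invariance identities $\psi_{2s}\circ\sigma = -\psi_{2s}$, $\Omega\circ\sigma = -\Omega$, $\Psi^{(1)}_{2s}\circ\sigma = \Psi^{(1)}_{2s}$ so that the linear contribution from $\cT_{s,N}$ cancels the non-resonant remainder from $\|\pi_N u\|_{H^s}^2$ \emph{exactly}, without leftover boundary terms.
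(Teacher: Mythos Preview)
Your proof is correct and follows exactly the standard Poincaré--Dulac computation that the paper defers to \cite{sun2023quasi}: differentiate the $H^s$-piece, symmetrize to produce $\psi_{2s}$, split into resonant/non-resonant parts, and then verify that the linear contribution from $\p_t\cT_{s,N}$ produces the factor $-i\Omega(\vec k)$ cancelling the non-resonant remainder, while the nonlinear contribution yields $\Im\,\cN_{s,N}$ after the odd/even symmetrization via $\Psi^{(1)}_{2s}\circ\sigma=\Psi^{(1)}_{2s}$. The only cosmetic point is that the paper's $H^s$-norm carries $\langle k\rangle^{2s}$ while $\psi_{2s}$ is written with $|k|^{2s}$; this is a harmless convention mismatch in the paper itself and does not affect your argument.
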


For a proof of this proposition, we refer to the computations performed in~\cite{sun2023quasi}, Section~2.

\subsection{Exponential integrability and quantitative \texorpdfstring{$L^p$}{TEXT} estimates} Here, and throughout this paper, we fix $\sigma < s-\frac{1}{2}$ close enough to $s-\frac{1}{2}$. To recall this, we might sometimes use the notation $\sigma = s-\frac{1}{2}-$. Moreover, for $R>0$, we denote $B^{\hsig}_R$ the closed centered ball of radius  $R>0$ in $\hsigt$.

\begin{prop}\label{prop exp integrability Fs,N}
	Let $s>\frac{9}{10}$. Then, for $m>2$ large enough, we have that for every $R>0$, there exists a constant $C_{s,R}>0$, such that for every $\alpha \geq 1$, $N\in\N$, and $\cF_{s,N} \in \{ \cM_{s,N}, \cT_{s,N}, \cN_{s,N}\}$,
	\begin{equation*}
		\log \int \1_{B^{\hsig}_R}(u) e^{\alpha | \cF_{s,N}(u)|}d\mu_s   \leq C_{s,R} \hsp \alpha^m 
	\end{equation*}
	As a consequence, we have:
	\begin{equation}\label{exp integrability RsN and QsN}
		\log \int \1_{B^{\hsig}_R}(u) e^{\alpha | R_{s,N}(u)|}d\mu_s  +  \log \int \1_{B^{\hsig}_R}(u) e^{\alpha | Q_{s,N}(u)|}d\mu_s   \leq C_{s,R} \hsp \alpha^m 
	\end{equation}
\end{prop}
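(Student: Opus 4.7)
It is enough to prove the stated bound for each of the three functionals $\cF_{s,N} \in \{\cM_{s,N}, \cT_{s,N}, \cN_{s,N}\}$: the consequence for $R_{s,N} = \tfrac{1}{6}\Re\,\cT_{s,N}$ is immediate since $e^{\alpha|R_{s,N}|} \leq e^{\frac{\alpha}{6}|\cT_{s,N}|}$, while $Q_{s,N} = -\tfrac{1}{6}\Im\,\cM_{s,N} + \Im\,\cN_{s,N}$ yields $e^{\alpha|Q_{s,N}|} \leq e^{\frac{\alpha}{6}|\cM_{s,N}|}\, e^{\alpha|\cN_{s,N}|}$, which is split by Cauchy--Schwarz. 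Fix such a functional $\cF_{s,N}$. Since $\cF_{s,N}(u) = \cF_{s,N}(\pi_N u)$ and $\1_{B^{\hsig}_R}(u) \leq \1_{B^{\hsig}_R}(\pi_N u)$, applying Lemma~\ref{lem boue-dupuis} to $w \mapsto \alpha|\cF_{s,N}(w)|\,\1_{B^{\hsig}_R}(w)$ gives
\[
\log\int \1_{B^{\hsig}_R}(u)\, e^{\alpha|\cF_{s,N}(u)|}\, d\mu_s \leq \E\Bigl[\sup_{V \in H^s}\bigl\{\alpha\,|\cF_{s,N}(\pi_N\phi^\omega + \pi_N V)|\,\chi - \tfrac{1}{2}\|V\|_{H^s}^2\bigr\}\Bigr],
\]
where $\chi := \1_{B^{\hsig}_R}(\pi_N\phi^\omega + \pi_N V)$. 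Crucially, on the support of $\chi$ one has the a priori bound $\|\pi_N V\|_{\hsig} \leq R + \|\pi_N\phi^\omega\|_{\hsig}$, which will compensate for the mismatch between the regularity $\sigma = s - \tfrac{1}{2}-$ of $\phi^\omega$ and the regularity $s$ needed for the energy.

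I then expand $\cF_{s,N}(\pi_N\phi^\omega + \pi_N V)$ by multilinearity. With $d=6$ for $\cM_{s,N}, \cT_{s,N}$ and $d=10$ for $\cN_{s,N}$ (the latter because $F_N u$ is itself quintic), this produces $2^d$ terms of the form $\cF_{s,N}(w_1, \ldots, w_d)$ where each $w_j \in \{\pi_N\phi^\omega, \pi_N V\}$. Let $k$ be the number of slots occupied by $\pi_N V$. For $k=0$ I bound $|\cF_{s,N}(\pi_N \phi^\omega, \ldots, \pi_N \phi^\omega)|$ in $L^p(\Omega)$ via Lemma~\ref{lem L2 estimate with orthogonality}, isolating by hand the pairing contributions (indices $k_i = k_j$ with opposite conjugation signs) where square-root cancellation is lost. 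Combining the pointwise bounds on $\Psi_{2s}^{(0)}, \Psi_{2s}^{(1)}$ from Lemma~\ref{lemma psi estimate} with the counting estimates and the lower bound on $\Omega$ of Section~\ref{section lower bound omega and counting bounds}, this yields a dominating random variable $X_0(\omega)$ with all moments uniformly bounded in $N$. For $k \geq 1$, the deterministic multilinear estimates of Section~\ref{section Deterministic estimates} (based on Strichartz inequalities and the same pointwise bounds) give
\[
|\cF_{s,N}(w_1, \ldots, w_d)| \leq C\,\|\pi_N V\|_{H^s}^k\, Y_k(\omega),
\]
where $Y_k(\omega)$ is a polynomial in $\|\pi_N \phi^\omega\|_{\hsig}$ with uniformly finite moments.

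Multiplying by $\alpha$ and applying Young's inequality to each term with $k \geq 1$, I split $\alpha\|V\|_{H^s}^k Y_k$ into a quadratic-in-$V$ piece absorbable into $\tfrac{1}{2}\|V\|_{H^s}^2$ plus a remainder of the form $C\alpha^{p_k} Y_k^{q_k}$. When $k=1$ this yields $p_k = 2$ directly; for $k \geq 3$ I first interpolate the surplus powers $\|\pi_N V\|_{H^s}^{k-2}$ against $\|\pi_N V\|_{\hsig}^{k-2}$ (controlled by the a priori bound from $\chi$), reducing to a quadratic $H^s$ factor before invoking Young; the case $k=2$ is handled analogously using the indicator to control the lower Sobolev norm. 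After absorbing all quadratic-in-$V$ parts into the variational penalty and taking expectation, the uniform moment bounds on $X_0$ and the $Y_k$ deliver the bound $C_{s,R}\,\alpha^m$ with $m := \max_k p_k$.

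The main obstacle is the deterministic multilinear estimate in the mixed regime at the sharp threshold $s > \tfrac{9}{10}$. Since $\sigma < s - \tfrac{1}{2}$ is strictly below $s$, one cannot place all random factors in $H^s$, and the gain $|\Omega|^{-1}$ arising from the Poincar\'e--Dulac normal form (encoded in $\Psi_{2s}^{(1)}$) must exactly offset the loss $\psi_{2s} \sim k_{(1)}^{2(s-1)}$. In the degenerate regime where $\Omega$ is small, one has to use the factorization $\Omega^\downarrow = 2(k_2-k_1)(k_3-k_2)$ of Lemma~\ref{lem factorization dgn Omega} together with the refined lower bounds on $\Omega$ and the frequency-counting estimates of Section~\ref{section lower bound omega and counting bounds}, applied through a careful Littlewood--Paley splitting of the six (or ten) frequencies; the exponent $s = \tfrac{9}{10}$ is precisely where these tools saturate, matching the probabilistic regularity threshold $\sigma > \tfrac{2}{5}$ imposed by global well-posedness.
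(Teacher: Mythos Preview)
Your overall architecture—Bou\'e--Dupuis followed by multilinear expansion in $(\phi,V)$ and absorption via Young—is exactly the paper's route, and for $\cM_{s,N}$ and $\cT_{s,N}$ your sketch is essentially correct (your ``interpolate the surplus $\|V\|_{H^s}$ powers against $\|\pi_N V\|_{\hsig}$'' is equivalent, at the dyadic level, to the paper's device of \emph{stopping} the expansion once two $V$'s have appeared and leaving the remaining slots as $U$). Two points, however, separate your plan from a working proof.

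First, a structural gap for $\cN_{s,N}$. You propose to expand all ten slots, including the five inside $F_N u = \pi_N(|\pi_N u|^4 \pi_N u)$. The paper explicitly does \emph{not} do this (see Section~\ref{par strategy and remarks}): the inner convolution $\sum_{p_1-p_2+\dots+p_5=k_1}$ carries no constraint on the resonant function $\Omega$, so the counting bounds of Lemma~\ref{lem counting bound 3 integers} and Corollary~\ref{cor psi estimate and counting bound} are unavailable there, and your appeal to ``the deterministic multilinear estimates of Section~\ref{section Deterministic estimates}'' is unjustified---those lemmas treat only the six-linear form with the $\Psi_{2s}$ multiplier, not a ten-linear one. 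The paper's key device is instead to keep $G^{N_1}=P_{N_1}F_N(U)$ as an \emph{unexpanded} deterministic block and control it via Lemma~\ref{lem localized L2 estimate on |u|4u}, which gives $\|P_{N_1}|U|^4U\|_{L^2}\lesssim_R N_1^{n_s+}$ on the cutoff. This is precisely what produces the factor $N_1^{n_s}$ (with $n_s<0$ exactly for $s>\tfrac{9}{10}$) that closes the dyadic sum. Without it, neither the fully random ten-linear term nor the mixed terms with random $p_i$'s are covered by any estimate in the paper, and the author's own remark in the introduction indicates that a pure Wiener-chaos treatment of these does not reach $s>\tfrac{9}{10}$.

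Second, your sketch omits the dyadic decomposition $\cF_{s,N}=\sum_{\vec N}\cF_{s,\vec N}$ and the two-regime split $N_{(4)}\ll N_{(3)}$ versus $N_{(4)}\sim N_{(3)}$, both of which are load-bearing: the first regime is handled purely deterministically via the refined estimate~\eqref{L6 refined deterministic estimate}, while only in the second does one expand in $(\phi,V)$ at slots $2,3,4$ and invoke randomness (after a further non-pairing/pairing split to enable Lemma~\ref{lem L2 estimate with orthogonality} and Corollary~\ref{cor psi estimate and counting bound}). Summability over $\vec N$ relies on extracting a factor $N_{(1)}^{0-}$ from each block, and the verification that this exponent is indeed negative is where the threshold $s>\tfrac{9}{10}$ actually enters the argument.
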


\begin{prop}\label{prop quantitative Lp estimate}
	Let $s>\frac{9}{10}$. For $\beta \in (0,1)$ close enough to 1, we have that for every $R>0$, there exists a constant $C_{s,R}>0$ such that:
	\begin{equation}\label{quantitative Lp estimate}
		\| \1_{B^{\hsig}_R}R_{s,N} \|_{L^p(d\mu_s)} + \| \1_{B^{\hsig}_R}Q_{s,N} \|_{L^p(d\mu_s)} \leq C_{s,R} p^\beta
	\end{equation}
	for every $N \in \N$ and $p \in [1,+\infty)$.
\end{prop}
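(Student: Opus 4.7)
\textbf{Proof proposal for Proposition \ref{prop quantitative Lp estimate}.}

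The plan is to derive the polynomial $L^p$ bound from the exponential integrability of Proposition \ref{prop exp integrability Fs,N} by comparing $|X|^p$ to an exponential moment and optimizing over the parameter $\alpha$. This is a standard, essentially soft, deduction: all the analytic difficulty has already been packaged into Proposition \ref{prop exp integrability Fs,N}, whose proof via the Boué-Dupuis formula is carried out in the forthcoming Section~\ref{section Exponential integrability}.

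Fix $R>0$, $N\in\N$, and let $X$ denote either of the truncated random variables $\1_{B^{\hsig}_R}R_{s,N}$ or $\1_{B^{\hsig}_R}Q_{s,N}$, viewed as a measurable function on $(\hsigt, \mu_s)$. The two $L^p$ norms can be treated separately; the argument is identical. The trivial identity $e^{\alpha|X|} = 1 + \1_{B^{\hsig}_R}(e^{\alpha|R_{s,N}|}-1)$ (and analogously with $Q_{s,N}$), combined with~\eqref{exp integrability RsN and QsN}, yields
\begin{equation*}
\int e^{\alpha|X|}\, d\mu_s \leq 2\exp(C_{s,R}\, \alpha^m), \qquad \alpha \geq 1.
\end{equation*}
The elementary inequality $|X|^p \leq p!\, \alpha^{-p} e^{\alpha|X|}$, obtained from the $p$-th term of the Taylor series of the exponential, then gives after integration
\begin{equation*}
\|X\|_{L^p(d\mu_s)}^p \leq 2\, p!\, \alpha^{-p}\exp(C_{s,R}\, \alpha^m).
\end{equation*}

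Taking $p$-th roots and invoking Stirling's estimate $(p!)^{1/p} \leq Cp/e$ produces
\begin{equation*}
\|X\|_{L^p(d\mu_s)} \leq C\, \frac{p}{\alpha}\exp\!\Bigl(\frac{C_{s,R}\, \alpha^m}{p}\Bigr).
\end{equation*}
Optimizing by choosing $\alpha := (p/(m C_{s,R}))^{1/m}$, which is $\geq 1$ for $p$ larger than a threshold $p_0 = p_0(s,R)$, yields
\begin{equation*}
\|X\|_{L^p(d\mu_s)} \leq C'_{s,R}\, p^{1-1/m}.
\end{equation*}
Setting $\beta := 1-1/m$, and recalling that $m$ is fixed large in Proposition \ref{prop exp integrability Fs,N}, gives the claimed estimate with $\beta$ close to~$1$.

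The main (and only) obstacle is really a minor bookkeeping point: the optimizer $\alpha$ above must stay $\geq 1$ to remain within the validity range of Proposition \ref{prop exp integrability Fs,N}, which fails when $p < p_0$. For such small $p$, however, the bound is trivial, since by Jensen's inequality $\|X\|_{L^p(d\mu_s)} \leq \|X\|_{L^{p_0}(d\mu_s)}$, the latter being a finite constant (already controlled by the previous step applied at $p = p_0$), while $p^\beta \geq 1$. Combining the two regimes gives~\eqref{quantitative Lp estimate} for all $p \in [1,+\infty)$, uniformly in $N$.
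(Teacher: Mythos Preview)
Your proof is correct and follows essentially the same strategy as the paper's: deduce the $L^p$ bound from the exponential integrability of Proposition~\ref{prop exp integrability Fs,N} by optimizing in $\alpha$, arriving at $\beta = 1 - \tfrac{1}{m}$ via the choice $\alpha \sim p^{1/m}$. The only technical difference is the mechanism for passing from exponential moments to $L^p$ norms: the paper uses Cavalieri's principle together with Markov's inequality and an explicit Gamma-type integral, whereas you use the direct pointwise comparison $|X|^p \leq \Gamma(p+1)\,\alpha^{-p} e^{\alpha|X|}$ and Stirling's estimate, which is a slightly more streamlined route to the same conclusion.
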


\begin{rem}
	Here, we obtain~\eqref{quantitative Lp estimate} as a consequence of~\eqref{exp integrability RsN and QsN} (see below), and that last inequality will be established through the Boué-Dupuis formula. Analogous (to \eqref{quantitative Lp estimate}) quantitative estimates were obtained through Wiener Chaos in~\cite{sun2023quasi}.
\end{rem}

\begin{proof}[Proof of Proposition \ref{prop quantitative Lp estimate} assuming Proposition~\ref{prop exp integrability Fs,N}] Let $J_{s,N} \in \{R_{s,N}, Q_{s,N}\}$ and let us prove the estimate~\eqref{quantitative Lp estimate} for $J_{s,N}$. We denote $B_R:= B^{\hsig}_R$. By Cavalieri's principle,
	\begin{equation*}
		\norm{\1_{B_R} J_{s,N}}_{L^p(d\mu_s)}^p = p \int_0^\infty \lambda^{p-1} \mu_s(\1_{B_R} |J_{s,N}| > \lambda) d\lambda = p \int_0^\infty \lambda^{p-1} \mu_s(\1_{B_R} e^{\alpha |J_{s,N}|} > e^{\alpha \lambda}) d\lambda
	\end{equation*}	
	On the other hand, by the Markov inequality and Proposition~\ref{prop exp integrability Fs,N}, we have for all $\alpha \geq 1$,
	\begin{equation*}
		\mu_s(\1_{B_R} e^{\alpha |J_{s,N}|} > e^{\alpha \lambda}) \leq e^{- \alpha \lambda} \int \1_{B_R} e^{\alpha |J_{s,N}|} d\mu_s \leq e^{C \alpha^m}e^{- \alpha \lambda}
	\end{equation*}
	with $C = C_{s,R} > 0$ a constant depending on $s$ and $R$, and $m>2$ large enough. Plugging this into the equality above yields:
	\begin{equation*}
		\norm{\1_{B_R} J_{s,N}}_{L^p(d\mu_s)}^p \leq p e^{C \alpha^m} \int_0^\infty \lambda^{p-1} e^{-\alpha \lambda} d\lambda
	\end{equation*}
	and by performing $\lfloor p \rfloor -1$ integration by parts,
	\begin{equation*}
		\begin{split}
			\int_0^\infty \lambda^{p-1} e^{-\alpha \lambda} d\lambda &= \alpha^{1-\lfloor p \rfloor} (p-1)(p-2)...(p-\lfloor p \rfloor+1) \int_0^\infty \lambda^{p-\lfloor p \rfloor}e^{-\alpha \lambda} d\lambda \\
			& \leq \alpha^{1-\lfloor p \rfloor} p^{\lfloor p \rfloor -1} \int (1+\lambda) e^{- \lambda} d\lambda 
		\end{split}
	\end{equation*}
	so, setting $C_0 := \int ( 1+ \lambda) e^{- \lambda} d\lambda$, we deduce that:
	\begin{equation*}
		\norm{\1_{B_R} J_{s,N}}_{L^p(d\mu_s)}^p \leq C_0 e^{C \alpha^m} \alpha^{1-\lfloor p \rfloor} p^{\lfloor p \rfloor} \leq C_0 p^p e^{C \alpha^m} \alpha^{2-p}
	\end{equation*}
	This inequality is true for all $\alpha \geq 1$. Then, optimizing in $\alpha$ the (RHS), we find that for $\alpha = p^{\frac{1}{m}}$ (which is indeed greater than $1$),
	\begin{equation*}
		\norm{\1_{B_R} J_{s,N}}_{L^p(d\mu_s)}^p \leq C_0 p^p e^{Cp} p^{\frac{2}{m}-\frac{p}{m}} 
	\end{equation*}
	which implies 
	\begin{equation*}
		\norm{\1_{B_R} J_{s,N}}_{L^p(d\mu_s)} \leq C_0^{\frac{1}{p}}e^C p^{1-\frac{1}{m} }p^{\frac{2}{pm}} \leq C  p^{1-\frac{1}{m}}
	\end{equation*}
	where $C>0$ has changed but still depends only on $s$ and $R$. This completes the proof.
\end{proof}

	\section{Transport of Gaussian measures by the truncated flow}\label{section Transport of GM by the truncated flow}
	In this section, we prove that $\mu_s$ is quasi-invariant along the truncated flow, providing an explicit formula for the Radon-Nikodym derivative. Essentially, the finite-dimensional nature of \eqref{truncated equation} will allow us to make rigorous the formal computations that lead to~\eqref{formal transport of mus} in the introduction. Even though the results of this section are known, see for example the seminal paper~\cite{tzvetkov2015quasiinvariant}, we provide the proofs that lead to~\eqref{truncated trsprt of mus} and~\eqref{truncated trsprt of rhosN} for completeness.  Here, we fix $s \in \R$ and $\sigma < s-\frac{1}{2}$. For any topological space $X$, we denote $\cB(X)$ the associated Borel $\sigma$-algebra. For any $N \in \N$, recall that we denote:
	\begin{align*}
	E_N &:= \pi_N \hsigt, & E_N^{\perp} := \pi_N^{\perp}\hsigt = (\textnormal{Id} -\pi_N) \hsigt
\end{align*}
Moreover, through the homeomorphism:
\begin{equation}\label{identification hsig with E_N x E_N perp}
	\begin{split}
		E_N &\times E_N^\perp \lra \hsigt \\
		 (w&,v) \longmapsto w+v
	\end{split}
\end{equation}
we identify $\hsigt$ with the product space $E_N \times E_N^\perp$. Therefore, we can write:
\begin{equation*}
	\cB(\hsigt) = \cB(E_N \times E_N^\perp) = \cB(E_N) \otimes \cB(E_N^\perp)
\end{equation*}
where the last equality follows from the fact that both $E_N$ and $E_N^\perp$ are separable topological spaces. \\

Furthermore, we consider on $E_N$ the Lebesgue measure, denoted $\cL^{2N+1}$, associated to the orthonormal basis $(e^{inx})_{|n| \leq N}$. More precisely, we define $\cL^{2N+1}$ as the pushforward measure of the Lebesgue measure on $\C^{2N+1}$ through the map:
\begin{equation}\label{identification C^2N+1 with E_N}
	\begin{split}
		&\C^{2N+1} \lra E_N \\
		(u_{-N},&...,u_N) \longmapsto \sum_{|n|\leq N} u_n e^{inx} 
	\end{split}
\end{equation} 
Finally, we recall that the Gaussian measure $\mu_s$ is obtained as the law of the random variable:
\begin{equation*}
	\begin{split}
	\phi : (\Omega,&\cA,\P) \lra (\hsig , \cB(\hsig)) \\
	       & \omega \longmapsto \sum_{n \in \Z} \frac{g_n(\omega)}{\langle n\rangle^s} e^{inx}
	 \end{split}
\end{equation*}
where $(g_n)_{n \in \Z}$ is a sequence of independent standard complex Gaussian random variables on the probability space $(\Omega,\cF,\P)$, and where the sum converges in $L^2(\Omega, \hsig)$.
\begin{lem}\label{lem mus = musN x musN perp}
	Let $N \in \N$. We can decompose the Gaussian measure $\mu_s$ as:
	\begin{equation}\label{mus = musN x musN perp}
		\mu_s = \mu_{s,N} \otimes \mu_{s,N}^\perp
	\end{equation}
	where
	\begin{align*}
		\mu_{s,N} &= (\pi_N)_\# \mu_s, & & \textnormal{and}&  \mu_{s,N}^\perp &= (\pi_N^\perp)_\# \mu_s 
	\end{align*}
	Moreover, we have:
	\begin{equation}\label{formula for musN}
		\mu_{s,N} = \frac{1}{Z_N} e^{-\frac{1}{2}\norm{\pi_N u}_{H^s}^2} d\cL^{2N+1}
	\end{equation}
	where $Z_N$ is the normalizing constant given by $Z_N = \prod_{|n|\leq N} \frac{2\pi}{\langle n \rangle^{2s}}$.
\end{lem}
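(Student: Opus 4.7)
The plan is to read off both statements directly from the Gaussian series $\phi^\omega = \sum_{n\in\Z} \frac{g_n(\omega)}{\langle n \rangle^s} e^{inx}$ defining $\mu_s$, by pushing forward through the two factorizations at hand.

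For the decomposition $\mu_s = \mu_{s,N} \otimes \mu_{s,N}^\perp$, I would first split the series as $\phi^\omega = \pi_N \phi^\omega + \pi_N^\perp \phi^\omega$, observing that $\pi_N \phi^\omega = \sum_{|n|\leq N} \frac{g_n}{\langle n \rangle^s} e^{inx}$ is $\sigma(g_n : |n| \leq N)$-measurable with values in $E_N$, while $\pi_N^\perp \phi^\omega$ is $\sigma(g_n : |n| > N)$-measurable with values in $E_N^\perp$. Independence of the family $(g_n)_{n\in\Z}$ forces these two sub-$\sigma$-algebras to be independent, so the $E_N \times E_N^\perp$-valued random variable $(\pi_N \phi^\omega,\, \pi_N^\perp \phi^\omega)$ has product law $\mu_{s,N} \otimes \mu_{s,N}^\perp$. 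Transporting this back through the homeomorphism~\eqref{identification hsig with E_N x E_N perp} (which is in particular a Borel isomorphism) yields~\eqref{mus = musN x musN perp}.

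For~\eqref{formula for musN}, I would move to coordinates via~\eqref{identification C^2N+1 with E_N}: the pushforward of $\pi_N\phi^\omega$ to $\C^{2N+1}$ is the vector $\bigl(\frac{g_n}{\langle n \rangle^s}\bigr)_{|n| \leq N}$ with independent components. With the convention that each $g_n$ has density $\frac{1}{2\pi}e^{-|z|^2/2}$ on $\C$, the component $\frac{g_n}{\langle n \rangle^s}$ has density $\frac{\langle n \rangle^{2s}}{2\pi}\, e^{-\frac{1}{2}\langle n \rangle^{2s} |u_n|^2}$ with respect to the Lebesgue measure on $\C$; multiplying these independent marginals and using that $\|\pi_N u\|_{H^s}^2 = \sum_{|n|\leq N} \langle n \rangle^{2s} |u_n|^2$ in these coordinates produces exactly $\frac{1}{Z_N} e^{-\frac{1}{2}\|\pi_N u\|_{H^s}^2} d\cL^{2N+1}$ with $Z_N = \prod_{|n|\leq N} \frac{2\pi}{\langle n \rangle^{2s}}$.

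There is no substantive obstacle here; both assertions are classical consequences of the cylindrical nature of $\mu_s$ viewed through the finite-dimensional projection $\pi_N$. The only points that require attention are fixing the convention for ``standard complex Gaussian'' so that the normalizing constant $Z_N$ comes out precisely as claimed, and checking that the Fourier normalization underlying $\hsigt$ is consistent with~\eqref{identification C^2N+1 with E_N}, so that the quadratic exponent in the density is exactly $\tfrac{1}{2}\|\pi_N u\|_{H^s}^2$ and not a rescaled version.
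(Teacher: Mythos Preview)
Your proposal is correct and follows essentially the same approach as the paper: both arguments use the independence of the families $(g_n)_{|n|\le N}$ and $(g_n)_{|n|>N}$ to obtain the product decomposition, and both compute the density on $E_N$ by passing to coordinates in $\C^{2N+1}$ and reading off the scaled Gaussian densities of the components $g_n/\langle n\rangle^s$. The only cosmetic difference is that the paper verifies~\eqref{formula for musN} by integrating against a generic nonnegative test function, whereas you multiply the one-dimensional marginal densities directly; these are equivalent.
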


\begin{proof}
	First, we prove~\eqref{mus = musN x musN perp}. From the uniqueness of the product measure, it is sufficient to prove that for every $A \times B \in E_N \times E_N^\perp$, we have:
	\begin{equation*}
		\mu_s(A \times B) = \mu_{s,N}(A) \mu_{s,N}^\perp(B)
	\end{equation*} 
	On the other hand, from the independence of the $\sigma$-algebra generated by the Gaussian $(g_n)_{|n|\leq N}$ with the $\sigma$-algebra generated by the Gaussian $(g_n)_{|n| > N}$, we have
	\begin{equation*}
		\mu_s(A \times B) = \P( \phi \in A \times B) = \P( \{\pi_N \phi \in A \} \cap \{\pi_N^\perp \phi \in B \} ) = \P(\pi_N\phi \in A) \P(\pi_N^\perp \phi \in B)
	\end{equation*}
	Then, from the definition of a pushforward measure:
	\begin{equation*}
		\P(\pi_N \phi \in A) = \P(\phi \in \pi_N^{-1}(A)) = \mu_s(\pi_N^{-1}(A)) = (\pi_N)_\# \mu_s(A) = \mu_{s,N}(A)
	\end{equation*}  
	and similarly, $\P(\pi_N^\perp \phi \in B) = (\pi_N^\perp)_\# \mu_s(B) = \mu_{s,N}^\perp(B)$. This concludes the proof of~\eqref{mus = musN x musN perp}. \\
	
	We now proceed to the proof of~\eqref{formula for musN}. We denote $dz$ and $d\vec{z}$ the Lebesgue measure on $\C$ and $\C^{2N+1}$ respectively. We recall that the law of the $g_n$ is $\frac{1}{2\pi} e^{-\frac{1}{2}|z|^2}dz$. Let $f$ be a non-negative measurable function on $E_N$. Then, from the independence of the $g_n$, and through the identification~\eqref{identification C^2N+1 with E_N}, we have:
	\begin{multline*}
		\int_{E_N} f(u) d\mu_{s,N} = \int_\Omega f(\sum_{|n|\leq N} \frac{g_n}{\langle n\rangle^s} e^{inx}) d\P(\omega) = \int_{\C^{2N+1}} f(\frac{z_{-N}}{\langle -N \rangle^{s}},...,\frac{z_N}{\langle N \rangle^{s}}) d\P_{(g_{-N},...,g_N)} \\
		=\int_{\C^{2N+1}} f(\frac{z_{-N}}{\langle -N \rangle^{s}},...,\frac{z_N}{\langle N \rangle^{s}}) \prod_{|n| \leq N} \frac{e^{-\frac{1}{2}|z_n|^2}}{2\pi} dz_{-N} ... dz_N \\ = \frac{1}{Z_N}\int_{\C^{2N+1}} f(\vec{z}) e^{-\frac{1}{2} \sum_{|n| \leq N} \langle n \rangle^{2s} |z_n|^2}  d\vec{z} 
		= \frac{1}{Z_N}\int_{E_N} f(u) e^{-\frac{1}{2} \norm{\pi_N u}_{H^s}^2} d\cL^{2N+1}
	\end{multline*}
	This equality, satisfied by all non-negative measurable function $f$ on $E_N$, implies the desired formula~\eqref{formula for musN}.
\end{proof}

\begin{lem}\label{lem pushforward of a density measure} Let $(X,\cA)$ be a measurable space and $f: X \ra [0,+\infty]$ a non-negative measurable function. Let $Y$ a set and $\chi : X \ra Y$ a bijection. Then,
	\begin{equation*}
		\chi_\# (f d\mu) = (f \circ \chi^{-1}) \chi_\# d\mu
	\end{equation*}
	where $Y$ is equipped with the pushforward $\sigma$-algebra.
\end{lem}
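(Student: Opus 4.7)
The plan is to unfold both sides against an arbitrary measurable test set $B \subset Y$ and invoke the standard change-of-variables formula for pushforwards. Concretely, equip $Y$ with the pushforward $\sigma$-algebra $\cB := \{B \subset Y : \chi^{-1}(B) \in \cA \}$, so that $\chi$ becomes automatically measurable (and in fact bimeasurable by construction, which is the point where we will need the bijectivity hypothesis in order to make sense of $f \circ \chi^{-1}$ as a measurable function on $(Y,\cB)$).

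First I would recall, and if needed quickly re-derive by the standard ``simple functions / monotone convergence'' argument, the elementary transfer formula
\begin{equation*}
  \int_Y g \, d(\chi_\# \mu) = \int_X g \circ \chi \, d\mu
\end{equation*}
valid for every non-negative $\cB$-measurable $g : Y \to [0,+\infty]$. The bijectivity ensures that $f \circ \chi^{-1}$ is $\cB$-measurable (preimages of Borel sets of $[0,+\infty]$ under $f \circ \chi^{-1}$ are exactly the $\chi$-images of preimages under $f$, which lie in $\cB$ by definition).

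Then, for any $B \in \cB$, I would compute the two sides separately. On one hand, directly from the definition of pushforward applied to the measure $f \, d\mu$,
\begin{equation*}
  \chi_\#(f \, d\mu)(B) = \int_{\chi^{-1}(B)} f \, d\mu = \int_X (\1_B \circ \chi) \, f \, d\mu.
\end{equation*}
On the other hand, applying the transfer formula above to $g = \1_B \cdot (f \circ \chi^{-1})$ and using $f \circ \chi^{-1} \circ \chi = f$,
\begin{equation*}
  \int_B (f \circ \chi^{-1}) \, d(\chi_\# \mu) = \int_X (\1_B \circ \chi) \, f \, d\mu.
\end{equation*}
The two right-hand sides coincide, so the identity $\chi_\#(f \, d\mu) = (f \circ \chi^{-1}) \, \chi_\# d\mu$ holds on every $B \in \cB$, which is exactly the claim.

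There is no real obstacle here; the only point to watch is that the statement is meaningful only once $Y$ is given the pushforward $\sigma$-algebra and $f \circ \chi^{-1}$ is recognized as $\cB$-measurable, both of which follow immediately from the bijectivity of $\chi$.
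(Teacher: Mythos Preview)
Your proof is correct and follows essentially the same route as the paper: both arguments reduce to the change-of-variables identity $\int_Y g\, d(\chi_\#\mu)=\int_X g\circ\chi\, d\mu$ and then check that the two measures agree. The only cosmetic difference is that you test against indicator functions $\1_B$ while the paper tests against arbitrary non-negative measurable $\psi$; your version is in fact the more direct one, and you additionally spell out why $f\circ\chi^{-1}$ is $\cB$-measurable, which the paper leaves implicit.
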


\begin{proof}
	Let $\psi$ a non-negative measurable function on $Y$. Then,
	\begin{multline*}
		\int_Y \psi(v) [(f \circ \chi^{-1}) \chi_\# d\mu](v) = \int_Y \psi(v) f(\chi^{-1}(v)) [\chi_\#d\mu](v) = \int_X \psi(\chi(u)) f(u) d\mu(u) \\ = \int_Y \psi(v)[\chi_\#(fd\mu)](v)
	\end{multline*}
	This indeed means that $\chi_\# (f d\mu) = (f \circ \chi^{-1}) \chi_\# d\mu$.
\end{proof}

\begin{prop}[Transport of Gaussian measures by the truncated flow]\label{prop trsp truncated measures}
	Let $s>0$, $N \in \N$, and $t\in \R$. Then,
	\begin{equation}\label{truncated trsprt of mus}
		\Phi^N(t)_\# \mu_s = \exp(-(\frac{1}{2}\norm{\pi_N \Phi^N(-t)u}_{H^s}^2-\frac{1}{2}\norm{\pi_N u}_{H^s}^2)) d\mu_s
	\end{equation}
	Moreover, if we set
	\begin{equation}\label{weighted gausssian measures}
		\rho_{s,N} := e^{-R_{s,N}(u)} d\mu_s,
	\end{equation}
	called the weighted Gaussian measure, we have:
	\begin{equation}\label{truncated trsprt of rhosN}
		\begin{split}
			\Phi^N(t)_\# \rho_{s,N} &= \exp(-(E_{s,N}(\Phi^N(-t)u) - E_{s,N}(u)))d\rho_{s,N} \\
			& = \exp\big( - \int_0^{-t} Q_{s,N}(\Phi^N(\tau)u) d\tau \big)d\rho_{s,N}
		\end{split}
	\end{equation}
\end{prop}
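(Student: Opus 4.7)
The plan is to exploit the factorization $\Phi^N(t) = (\tilde{\Phi}^N(t), e^{it\p_x^2})$ from Proposition~\ref{prop factorization of the truncated flow} to reduce the transport of $\mu_s$ to a finite-dimensional Liouville argument on $E_N$, combined with the invariance of $\mu_{s,N}^\perp$ under the linear Schrödinger propagator. Using the product decomposition from Lemma~\ref{lem mus = musN x musN perp}, I would write
\begin{equation*}
\Phi^N(t)_\# \mu_s = \Phi^N(t)_\#(\mu_{s,N} \otimes \mu_{s,N}^\perp) = \tld{\Phi}^N(t)_\# \mu_{s,N} \otimes (e^{it\p_x^2})_\# \mu_{s,N}^\perp.
\end{equation*}
Since $e^{it\p_x^2}$ preserves the $H^s$-norm of each Fourier mode independently, one easily sees that $(e^{it\p_x^2})_\# \mu_{s,N}^\perp = \mu_{s,N}^\perp$ (by the same kind of computation as in the proof of Lemma~\ref{lem mus = musN x musN perp}, using that a standard complex Gaussian is invariant under unitary rotation). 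So the nontrivial content lies in computing $\tld{\Phi}^N(t)_\# \mu_{s,N}$.

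For this, I would use the explicit density formula $\mu_{s,N} = \frac{1}{Z_N} e^{-\frac{1}{2}\norm{\pi_N u}_{H^s}^2} d\cL^{2N+1}$ together with Lemma~\ref{lem pushforward of a density measure} to obtain
\begin{equation*}
\tld{\Phi}^N(t)_\# \mu_{s,N} = \frac{1}{Z_N} e^{-\frac{1}{2}\norm{\pi_N \tld{\Phi}^N(-t)u}_{H^s}^2}\, \tld{\Phi}^N(t)_\# d\cL^{2N+1}.
\end{equation*}
The key point is then that $\tld{\Phi}^N$ is the flow of the finite-dimensional Hamiltonian ODE~\eqref{FNLS} on the symplectic space $E_N$ (equipped with the symplectic form corresponding to the orthonormal basis $(e^{inx})_{|n|\leq N}$), hence by Liouville's theorem $\tld{\Phi}^N(t)_\# \cL^{2N+1} = \cL^{2N+1}$. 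Factoring back $\mu_{s,N}$ on the right, this yields $\tld{\Phi}^N(t)_\# \mu_{s,N} = e^{-\frac{1}{2}(\norm{\pi_N \tld{\Phi}^N(-t)u}_{H^s}^2 - \norm{\pi_N u}_{H^s}^2)} d\mu_{s,N}$. Combining with the $E_N^\perp$ component and observing that under the identification~\eqref{identification hsig with E_N x E_N perp} one has $\pi_N \Phi^N(-t)u = \tld{\Phi}^N(-t)(\pi_N u)$ gives~\eqref{truncated trsprt of mus}.

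For the second part, I would simply combine Lemma~\ref{lem pushforward of a density measure} with the first part:
\begin{equation*}
\Phi^N(t)_\# \rho_{s,N} = \Phi^N(t)_\#(e^{-R_{s,N}(u)}d\mu_s) = e^{-R_{s,N}(\Phi^N(-t)u)} \Phi^N(t)_\# d\mu_s,
\end{equation*}
and plug in~\eqref{truncated trsprt of mus}. Using the definition $E_{s,N}(u) = \frac{1}{2}\norm{\pi_N u}_{H^s}^2 + R_{s,N}(u)$, the exponent reorganizes as $-(E_{s,N}(\Phi^N(-t)u) - E_{s,N}(u)) - R_{s,N}(u)$, which gives the first equality of~\eqref{truncated trsprt of rhosN}. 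The second equality follows by writing, via the fundamental theorem of calculus and the group property $\Phi^N(\tau) \circ \Phi^N(-\tau) = \text{Id}$,
\begin{equation*}
E_{s,N}(\Phi^N(-t)u) - E_{s,N}(u) = \int_0^{-t} \frac{d}{d\tau}E_{s,N}(\Phi^N(\tau)u)\, d\tau = \int_0^{-t} Q_{s,N}(\Phi^N(\tau)u)\, d\tau,
\end{equation*}
where the last equality uses Proposition~\ref{prop Poincaré Dulac} applied at the point $\Phi^N(\tau)u$. The main conceptual step is the Liouville invariance of $\cL^{2N+1}$ under $\tld{\Phi}^N$; once that is identified, everything else is bookkeeping with pushforwards and the definitions of $E_{s,N}$ and $Q_{s,N}$.
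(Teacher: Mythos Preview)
Your proposal is correct and follows essentially the same approach as the paper: factorize $\Phi^N(t)$ via Proposition~\ref{prop factorization of the truncated flow}, use Liouville's theorem for $\tld{\Phi}^N(t)$ on $E_N$ together with the rotational invariance of complex Gaussians for the $E_N^\perp$ component, and then deduce~\eqref{truncated trsprt of rhosN} from~\eqref{truncated trsprt of mus} via Lemma~\ref{lem pushforward of a density measure}, the definition of $E_{s,N}$, and Proposition~\ref{prop Poincaré Dulac}. The only cosmetic difference is that the paper writes out explicitly the Fubini--Tonelli step recombining the density on $E_N$ with $\mu_{s,N}^\perp$ into a density against $\mu_s$, which you implicitly assume.
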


\begin{proof}
	From the identification~\eqref{identification hsig with E_N x E_N perp}, the factorization of the truncated flow in Proposition~\ref{prop factorization of the truncated flow}, and Lemma~\ref{lem mus = musN x musN perp}, we can write:
	\begin{equation}\label{PhiN push on EN x EN perp}
		\Phi^N(t)_\# \mu_s = (\tld{\Phi}^N(t),e^{it\p_x^2})_\# (\mu_{s,N} \otimes \mu_{s,N}^\perp) = \tld{\Phi}^N(t)_\# \mu_{s,N} \otimes (e^{it\p_x^2})_\# \mu_{s,N}^\perp
	\end{equation}
	On the one hand, by Lemma~\ref{lem pushforward of a density measure}, 
	\begin{equation*}
		 \tld{\Phi}^N(t)_\# \mu_{s,N} = \tld{\Phi}^N(t)_\#(\frac{1}{Z_N}e^{-\frac{1}{2}\norm{\pi_N u}_{H^s}^2}d\cL^{2N+1}) =  \frac{1}{Z_N}e^{-\frac{1}{2}\norm{\pi_N \tld{\Phi}^N(-t)u}_{H^s}^2}d(\tld{\Phi}^N(t)_\#\cL^{2N+1})
	\end{equation*}
	Since the equation~\eqref{truncated equation} satisfied by $\tld{\Phi}^N(t)$ is Hamiltonian (see for example the appendix of~\cite{knez24transportlowregularitygaussian}), we have, by Liouville's theorem, $\tld{\Phi}^N(t)_\#\cL^{2N+1} = \cL^{2N+1}$. Moreover, we have
		$\tld{\Phi}^N(-t)=\pi_N\tld{\Phi}^N(-t)=\pi_N \Phi^N(-t)$.
	Thus, continuing the equality above, we obtain:
	\begin{equation}\label{tld Phi pushforward musN}
		\begin{split}
			\tld{\Phi}^N(t)_\# \mu_{s,N} &= e^{-(\frac{1}{2}\norm{\pi_N \Phi^N(-t)u}_{H^s}^2-\frac{1}{2}\norm{\pi_N u}_{H^s}^2)} \frac{e^{-\frac{1}{2}\norm{\pi_N u}_{H^s}^2}}{Z_N}d\cL^{2N+1} \\ 
			&= e^{-(\frac{1}{2}\norm{\pi_N \Phi^N(-t)u}_{H^s}^2-\frac{1}{2}\norm{\pi_N u}_{H^s}^2)} d\mu_{s,N}
		\end{split}
	\end{equation}
	On the other hand, since $e^{it\p_x^2}$ acts continuously on $L^2(\Omega,\hsig)$, we have:
	\begin{equation*}
		e^{it\p_x^2}\big(\sum_{|n|>N}\frac{g_n}{\langle n \rangle^s}e^{inx}\big) = e^{it\p_x^2}\big(\lim_{M \ra \infty}\sum_{M\geq |n|>N}\frac{g_n}{\langle n \rangle^s}e^{inx}\big) = \sum_{|n|> N} \frac{e^{-itn^2}g_n}{\langle n \rangle^s}e^{inx}
	\end{equation*}
	where the limit takes place in the space $L^2(\Omega,\hsig)$. Consequently, since $(e^{-itn^2}g_n)$ is still a sequence of independent standard complex Gaussian, we have:
	\begin{equation}\label{S(t) pushforward musN perp}
		(e^{it\p_x^2})_\# \mu_{s,N}^\perp = (e^{it\p_x^2})_\# \big(\sum_{|n|>N}\frac{g_n}{\langle n \rangle^s}e^{inx}\big)_\# \P = \big(\sum_{|n|>N}\frac{e^{-itn^2}g_n}{\langle n \rangle^s}e^{inx}\big)_\# \P = \mu_{s,N}^\perp
	\end{equation}
	Now, combining~\eqref{PhiN push on EN x EN perp},~\eqref{tld Phi pushforward musN}, and~\eqref{S(t) pushforward musN perp}, leads to:
	\begin{equation*}
		\Phi^N(t)_\# \mu_s = [e^{-(\frac{1}{2}\norm{\pi_N \Phi^N(-t)u}_{H^s}^2-\frac{1}{2}\norm{\pi_N u}_{H^s}^2)} d\mu_{s,N}] \otimes d\mu_{s,N}^\perp
	\end{equation*}
	and by Fubini-Tonelli, this writes as
	\begin{equation*}
		\Phi^N(t)_\# \mu_s = e^{-(\frac{1}{2}\norm{\pi_N \Phi^N(-t)u}_{H^s}^2-\frac{1}{2}\norm{\pi_N u}_{H^s}^2)} [d\mu_{s,N} \otimes d\mu_{s,N}^\perp] = e^{-(\frac{1}{2}\norm{\pi_N \Phi^N(-t)u}_{H^s}^2-\frac{1}{2}\norm{\pi_N u}_{H^s}^2)} d\mu_s
	\end{equation*}
	Hence, we have established~\eqref{truncated trsprt of mus}. As a consequence, let us now proceed to prove~\eqref{truncated trsprt of rhosN}. Using Lemma~\ref{lem pushforward of a density measure} again, then~\eqref{truncated trsprt of mus}, and then the definition of the modified energy~\eqref{EsN}, yields:
	\begin{equation*}
		\begin{split}
			\Phi^N(t)_\# \rho_{s,N} = e^{-R_{s,N}(\Phi^N(-t)u)}\Phi^N(t)_\# \mu_s & = e^{-(E_{s,N}(\Phi^N(-t)u) - E_{s,N}(u))}e^{-R_{s,N}(u)}d\mu_s \\
			& = e^{-(E_{s,N}(\Phi^N(-t)u) - E_{s,N}(u))} d\rho_{s,N}
		\end{split}
	\end{equation*}
	Finally, using the fundamental theorem of calculus and the identity from Proposition~\ref{prop Poincaré Dulac}, we have the rewriting:
	\begin{equation*}
		E_{s,N}(\Phi^N(-t)u) - E_{s,N}(u) = \int_0^{-t} \frac{d}{d\tau} E_{s,N}(\Phi^N(\tau)u) d\tau = \int_0^{-t} Q_{s,N}(\Phi^N(\tau)u) d\tau
	\end{equation*}
	where we have used the additivity of $\Phi^N(t)$ in order to write:
	\begin{center}
		$\frac{d}{d\tau} E_{s,N}(\Phi^N(\tau)u) = \frac{d}{dt}E_{s,N}(\Phi^N(t+\tau)u)|_{t=0} = \frac{d}{dt}E_{s,N}(\Phi^N(t)(\Phi^N(\tau)u))|_{t=0} = Q_{s,N}(\Phi^N(\tau)u)$
	\end{center}
	This completes the proof of Proposition~\ref{prop trsp truncated measures}.
\end{proof}

	\section{Proof of the quasi-invariance}\label{section proof of the quasi-inv}This section is dedicated to the proof of Theorem~\ref{thm quasi-inv}. We recall that we fix the parameter $\sigma<s-\frac{1}{2}$ close to $s-\frac{1}{2}$, and see the Gaussian measures $\mu_s$ as probability measures on $(\hsigt,\cB(\hsigt))$. For transferring the quasi-invariance for the truncated flow to the original flow, we combine the approximation properties of $\Phi(t)$ by $\Phi^N(t)$ with the uniform (in $N \in \N$) energy estimates from Propositions~\ref{prop exp integrability Fs,N} and~\ref{prop quantitative Lp estimate}. Furthermore, we work intermediately with the weighted Gaussian measures defined in~\eqref{weighted gausssian measures}.

\begin{lem}\label{lem quantitative ineq quasi-inv WGM}
	Let $s>\frac{9}{10}$, $R>0$, and $T>0$. There exists a constant $C_{s,R,T}>0$, such that for every compact set $K \subset B^{\hsig}_R$, there exists an integer $N_{K,T}$ such that:
	\begin{equation}\label{quantitative ineq quasi-inv WGM}
		\rho_{s,N}(\Phi^N(t)K) \leq C_{s,R,T} \hsp \rho_{s,N}(K)^{\frac{1}{2}}
	\end{equation} 
	for every $N \geq N_{K,T}$, and every $|t| \leq T$.
\end{lem}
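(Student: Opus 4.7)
The plan is to set $y_N(t):=\rho_{s,N}(\Phi^N(t)K)$ and combine a H\"older--ODE argument in the spirit of \cite{tzvetkov2015quasiinvariant} with the quantitative $L^p$ bounds of Proposition~\ref{prop quantitative Lp estimate}. The first step is a dynamical confinement: since $K\subset B^{H^{\sigma}}_R$ is compact and the global flow $\Phi(\tau)$ maps bounded sets of $\hsigt$ into bounded sets on $[-T,T]$ (by the GWP recalled in Section~\ref{section Introduction} for $\sigma>\frac{2}{5}$), there exists $R'=R'(R,T)$ with $\bigcup_{|\tau|\leq T}\Phi(\tau)K\subset B^{H^{\sigma}}_{R'-1}$. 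The approximation property~\eqref{PhiN K included PhiK + B} with $\eps=1$ then produces $N_{K,T}\in\N$ such that $\Phi^N(\tau)K\subset B^{H^{\sigma}}_{R'}$ for all $|\tau|\leq T$ and $N\geq N_{K,T}$. This step is what removes the dependence on the particular compact $K$ in favor of a ball depending only on $R,T$.

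Next, the transport identity~\eqref{truncated trsprt of rhosN} gives $y_N(t)=\int_K F_{s,N,-t}\,d\rho_{s,N}$, and differentiating in $t$ together with the change of variable $w=\Phi^N(t)v$ (the density $F_{s,N,t}$ produced by the change of variable exactly cancels the factor $F_{s,N,-t}$ coming from $\partial_t F_{s,N,-t}=-Q_{s,N}(\Phi^N(t)v)F_{s,N,-t}$) yields the clean identity
\[
y_N'(t)=-\int_{\Phi^N(t)K}Q_{s,N}(w)\,d\rho_{s,N}(w).
\]
For any $q\geq 1$, H\"older's inequality combined with $\Phi^N(t)K\subset B^{H^{\sigma}}_{R'}$ and Proposition~\ref{prop quantitative Lp estimate} (passing from $\mu_s$ to $\rho_{s,N}=e^{-R_{s,N}}\mu_s$ via Cauchy--Schwarz, using the exponential integrability of $R_{s,N}$ from Proposition~\ref{prop exp integrability Fs,N}) gives a uniform-in-$N$ bound of the form $|y_N'(t)|\leq C_{s,R'}\,q^{\beta}\,y_N(t)^{1-1/q}$ for some $\beta<1$. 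Integrating over $[0,|t|]$ produces the key ODE estimate
\[
y_N(t)^{1/q}\leq y_N(0)^{1/q}+C_{s,R',T}\,q^{\beta-1},
\]
valid for every $q\geq 1$, every $|t|\leq T$, and every $N\geq N_{K,T}$.

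Finally, to extract the exponent $\tfrac{1}{2}$, I would split into two regimes. If $y_N(0)\geq \delta_0$ for a threshold $\delta_0=\delta_0(R,T)$ chosen below, the trivial bound $y_N(t)\leq \rho_{s,N}(B^{H^{\sigma}}_{R'})\leq M(R,T)$ (finite uniformly in $N$ by Proposition~\ref{prop exp integrability Fs,N}) gives $y_N(t)\leq (M/\sqrt{\delta_0})\,y_N(0)^{1/2}$. If $y_N(0)<\delta_0$, I would choose $q=|\log y_N(0)|$, so that $y_N(0)^{1/q}=e^{-1}$; using $(1+x)^q\leq e^{qx}$ in the ODE estimate yields
\[
y_N(t)\leq y_N(0)\cdot\exp\bigl(C_{s,R',T}\,e\,q^{\beta}\bigr)=y_N(0)^{\,1-C_{s,R',T}\,e\,|\log y_N(0)|^{\beta-1}},
\]
and since $\beta<1$ the exponent exceeds $\tfrac{1}{2}$ as soon as $y_N(0)\leq \delta_0:=\exp\bigl(-(2 C_{s,R',T}e)^{1/(1-\beta)}\bigr)$. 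Combining both regimes yields the desired inequality with $C_{s,R,T}:=\max(M/\sqrt{\delta_0},\,1)$.

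The main obstacle is precisely the \emph{subpolynomial} character $\beta<1$ of the $L^q$ bound of $Q_{s,N}$ in Proposition~\ref{prop quantitative Lp estimate}: without it, the term $C q^{\beta-1}T$ would not vanish as $q\to\infty$ and the optimization above would fail to reach the exponent $\tfrac{1}{2}$. This subpolynomial growth is also what allows to pass qualitatively to $y_N(0)=0$, giving the quasi-invariance of $\rho_{s,N}$. Its derivation is the technical heart of the paper, relying on the exponential moment bounds of Proposition~\ref{prop exp integrability Fs,N} established through the Bou\'e--Dupuis formula.
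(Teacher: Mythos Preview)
Your proof is correct and follows essentially the same approach as the paper: the same confinement step via the approximation property, the same derivative identity $y_N'(t)=-\int_{\Phi^N(t)K}Q_{s,N}\,d\rho_{s,N}$, the same H\"older argument combined with the $L^p$ bound on $Q_{s,N}$ (transferred from $\mu_s$ to $\rho_{s,N}$ via Cauchy--Schwarz), and the same integrated inequality $y_N(t)^{1/q}\leq y_N(0)^{1/q}+Cq^{\beta-1}$. The only cosmetic difference is in the final optimization: the paper first writes $F(t)\leq F(0)\exp(Cp^{\beta}F(0)^{-1/p})$, applies the Young-type split $Cp^{\beta}\leq\alpha p+C_{\alpha}$, and then chooses $p=\log(1/F(0))$ and $\alpha=e^{-1}/2$ to land exactly on $F(0)^{1/2}$; you instead compute directly that with $q=|\log y_N(0)|$ the resulting exponent $1-Ce|\log y_N(0)|^{\beta-1}$ exceeds $\tfrac12$ once $y_N(0)$ is below an explicit threshold. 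Both arguments are equivalent and exploit the crucial fact that $\beta<1$.
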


\begin{proof} First of all, thanks to~\eqref{PhiN K included PhiK + B} and~\eqref{Phi B included B}, there exists $\Lambda=\Lambda_{R,T}>0$ and $N_{K,T} \in \N$ such that for every $N \geq N_{K,T}$, and every $|t| \leq T$ :
	\begin{equation*}
		\Phi_N(t)(K) \subset \Phi(t)(K) + B^{\hsig}_1 \subset B^{\hsig}_{\Lambda}
	\end{equation*}
	Secondly, from~\eqref{truncated trsprt of rhosN}, we have:
	\begin{equation*}
		\rho_{s,N}(\Phi^N(t)K) = \Phi^N(-t)_\# \rho_{s,N}(K) = \int_K \exp\big( - \int_0^{t} Q_{s,N}(\Phi^N(\tau)u) d\tau \big)d\rho_{s,N}
	\end{equation*}
	Consequently,
	\begin{equation*}
		\frac{d}{dt}\rho_{s,N}(\Phi^N(t)K) = - \int_K Q_{s,N}(\Phi^N(t)u) d(\Phi^N(-t)_\# \rho_{s,N}) = -\int_{\Phi^N(t)K}Q_{s,N}(u) d\rho_{s,N}
	\end{equation*}
	and it follows that for every $p \in [1,\infty)$:
	\begin{equation*}
		|\frac{d}{dt}\rho_{s,N}(\Phi^N(t)K)| \leq \| \1_{B^{\hsig}_{\Lambda}} Q_{s,N}\|_{L^p(d\rho_{s,N})} \rho_{s,N}(\Phi^N(t)K)^{1 - \frac{1}{p}}
	\end{equation*}
	Furthermore, the energy estimates \eqref{exp integrability RsN and QsN} and~\eqref{quantitative Lp estimate} imply by the Cauchy-Schwarz inequality that:
	\begin{equation*}
		\| \1_{B^{\hsig}_{\Lambda}} Q_{s,N} \|_{L^p(d\rho_{s,N})} \leq C_{s,R,T} \| \1_{B^{\hsig}_{\Lambda}} Q_{s,N} \|_{L^{2p}(d\mu_s)}  \leq C_{s,R,T} p^\beta
	\end{equation*} 
	(where the constant $C_{s,R,T}$ changes from an inequality to another); and plugging this in the inequality above yields:
	\begin{equation*}
		|\frac{d}{dt}\rho_{s,N}(\Phi^N(t)K)| \leq C_{s,R,T}  p^\beta \rho_{s,N}(\Phi^N(t)K)^{1 - \frac{1}{p}}
	\end{equation*}
	In other words, the function $F(t) := \rho_{s,N}(\Phi^N(t)K)$ satisfies the differential inequality:
	\begin{equation}\label{differential ineq}
		|F'(t)| \leq C_{s,R,T}  p^\beta F(t)^{1-\frac{1}{p}}
	\end{equation}
	and integrating this leads to:
	\begin{equation}\label{integration of the differential ineq}
		F(t) \leq (F(0)^{\frac{1}{p}} + C_{s,R,T}p^{\beta-1})^p
	\end{equation} 
	Our goal is to prove that $F(t)\leq C_{s,R,T} F(0)^{\frac{1}{2}}$; it is true when $F(0)=0$ thanks to~\eqref{truncated trsprt of rhosN}, so we assume that $F(0) > 0$. Then, we can rewrite~\eqref{integration of the differential ineq} as:
	\begin{equation*}
		F(t) \leq F(0)(1+F(0)^{-\frac{1}{p}} C_{s,R,T} p^{\beta-1})^p = F(0)\exp(p \log(1+F(0)^{-\frac{1}{p}} C_{s,R,T} p^{\beta-1}))
	\end{equation*}
	and using the inequalities:
	\begin{align*}
		\log(1+x)& \leq x & &\textnormal{and} & C_{s,R,T} p^\beta \leq \alpha p + C_{s,R,T,\alpha} \hspace{0.3cm} \textnormal{(for any fixed $\alpha>0$, and every $p\geq 1$)}
	\end{align*} we obtain: 
	\begin{equation}\label{ineq on F(t)}
		F(t) \leq F(0)\exp(C_{s,R,T}p^\beta F(0)^{-\frac{1}{p}}) \leq F(0)\exp(\alpha p F(0)^{-\frac{1}{p}} + C_{s,R,T,\alpha} F(0)^{-\frac{1}{p}}) 
	\end{equation}
	Now, if $\log(\frac{1}{F(0)})<1$, that is $F(0)>e^{-1}$, we simply take $p=1$ and $\alpha=1$, so that:
	\begin{equation*}
		F(t) \leq C_{s,R,T}F(0)
	\end{equation*}
	and since from~\eqref{exp integrability RsN and QsN}, we have $F(0) \leq C_{s,R}$, we can write (with again $C_{s,R,T}$ changing):
	\begin{equation*}
		F(t) \leq C_{s,R,T}F(0)^{\frac{1}{2}}F(0)^{\frac{1}{2}} \leq C_{s,R,T}F(0)^{\frac{1}{2}}
	\end{equation*}
	which is the desired inequality. Next, if $\log(\frac{1}{F(0)}) \geq 1$, then we choose $p = \log(\frac{1}{F(0)})$ and $\alpha = e^{-1}/2$, so that $\alpha p F(0)^{-\frac{1}{p}} = -\frac{1}{2} \log F(0)$; and thus we get from~\eqref{ineq on F(t)} that:
	\begin{equation*}
		F(t) \leq F(0) \exp(-\frac{1}{2}\log F(0)) \exp(C_{s,R,T}) \leq C_{s,R,T} F(0)^{\frac{1}{2}}
	\end{equation*}
 	which is the desired inequality. The proof is now complete.
\end{proof}

\begin{proof}[Proof of Theorem \ref{thm quasi-inv}] Let $T>0$ and $|t|\leq T$. We prove that $\Phi_\#(t)\mu_s$ is absolutely continuous with respect to $\mu_s$. Since $T>0$ is arbitrary, it will indeed imply Theorem~\ref{thm quasi-inv}.  \\
	
	We introduce $\sigma' \in (\sigma, s-\frac{1}{2})$. Since $H^{\sigma'}(\T)$ is of $\mu_s$-full measure, it suffices to prove that for any $A \in \cB(H^{\sigma'}(\T))$, we have:
	\begin{equation*}
		\mu_s(A) = 0 \implies \mu_s(\Phi(t) A) =0
	\end{equation*}
	Let us then fix $A \in \cB(H^{\sigma'}(\T))$ such that $\mu_s(A)=0$. First, we write:
	\begin{equation*}
		\mu_s(\Phi(t)A) = \mu_s(\bigcup_{R>0} \Phi(t)(A \cap B^{H^{\sigma'}}_R))= \lim_{R \ra \infty} \mu_s(\Phi(t)(A \cap B^{H^{\sigma'}}_R))
	\end{equation*}
	so that, denoting $A_R := A \cap B^{H^{\sigma'}}_R \subset B^{H^{\sigma'}}_R$, it is enough to prove that $\mu_s(\Phi(t)A_R)=0$. Moreover, by inner regularity of $\mu_s$, we have:
	\begin{equation*}
		\mu_s(\Phi(t)A_R) := \sup \{ \mu_s(K) \hsp : \hsp K \subset \Phi(t)A_R, \hsp K \hsp \textnormal{compact set in $H^{\sigma'}$}\}
	\end{equation*}
	Let us then fix $K \subset \Phi(t)A_R$ a compact set in $H^{\sigma'}$ and prove that $\mu_s(K)=0$; it will imply that $\mu_s(\Phi(t) A_R)=0$, thereby completing the proof. We consider the set:
	\begin{equation*}
		\tld{K} := \Phi(-t) K = \Phi(t)^{-1}(K) 
	\end{equation*}
	which is a compact set in $H^{\sigma'}$ (by continuity of the flow), and satisfies $\tld{K} \subset A_R \subset B^{H^{\sigma'}}_R$. Thus, we can apply~\eqref{PhiK included PhiN(K+B)}; for $\eps>0$, we consider $N_{\eps}$ such that for every $N \geq N_{\eps}$:
	\begin{equation*}
		\Phi(t) \tld{K} \subset \Phi^N(t)(\tld{K} + B^{H^{\sigma'}}_\eps)
	\end{equation*}
	Since $\sigma'>\sigma$, the ball $B^{H^{\sigma'}}_\eps$ is compact in $\hsig$; therefore, $\tld{K}+ B^{H^{\sigma'}}_\eps$ is compact in $\hsig$, and included (for $\eps$ small enough) in the ball $B_{2R}^{H^{\sigma'}} \subset B_{2R}^{H^{\sigma}}$. Thanks to~\eqref{PhiN K included PhiK + B} and~\eqref{Phi B included B}, we deduce that  there exists a constant $\Lambda=\Lambda_{R,T}>0$ such that for $N \geq N_{\eps}$ (with possibly $N_\eps$ even larger):
	\begin{equation*}
			\Phi(t) \tld{K} \subset \Phi^N(t)(\tld{K} + B^{H^{\sigma'}}_\eps) \subset B^{H^{\sigma}}_{\Lambda}
	\end{equation*}
	Hence, from this,~\eqref{exp integrability RsN and QsN},~\eqref{quantitative ineq quasi-inv WGM}, and the Cauchy-Schwarz inequality, we are now able to write, with $N$ large enough:
	\begin{multline*}
		\mu_s(K) = \mu_s(\Phi(t)\tld{K}) \leq \mu_s(\Phi^N(t)(\tld{K} + B^{H^{\sigma'}}_\eps)) \leq C_{s,R,T} \rho_{s,N}(\Phi^N(t)(\tld{K} + B^{H^{\sigma'}}_\eps))^{\frac{1}{2}} \\
		\leq C_{s,R,T} \rho_{s,N}(\tld{K} + B^{H^{\sigma'}}_\eps)^{\frac{1}{4}} \leq  C_{s,R,T} \mu_s(\tld{K} + B^{H^{\sigma'}}_\eps)^{\frac{1}{8}}
	\end{multline*}	
	with $C_{s,R,T}$ varying from one inequality to another. Passing to the limit $\eps \ra 0$ leads to (since $\tld{K}$ is closed in $H^{\sigma'}$):
	\begin{equation*}
		\mu_s(K) \leq C_{s,R,T} \mu_s\big(\bigcap_{\eps>0}\tld{K} + B^{H^{\sigma'}}_\eps\big)^{\frac{1}{8}} = C_{s,R,T} \mu_s(\tld{K})^{\frac{1}{8}}
	\end{equation*}
	This concludes the proof because $\mu_s(\tld{K}) \leq \mu_s(A) = 0$.
\end{proof}

The remaining part of the paper is dedicated to the proof of the Exponential integrability properties in Proposition~\ref{prop exp integrability Fs,N}; and also, in Appendix~\ref{appendix construction of a global flow and approximation}, to the construction of a global flow and the approximation properties of $\Phi(t)$ by $\Phi^N(t)$.
	
	\section{Lower bound on the resonant function and counting bounds}\label{section lower bound omega and counting bounds}
	This section and the next one are intended to prepare the analysis that we will perform to prove Proposition~\ref{prop exp integrability Fs,N} in the central Section~\ref{section Exponential integrability}. In the present section, we establish a lower bound on the resonant function $\Omega$ (see Definition~\ref{def psi and omega}) and counting bounds. 

\subsubsection{A lower bound on the resonant function}

\begin{lem}\label{lem lower bound Omega when three high and three low freq}
	There exists a constant $c_0>0$ such that for every $(k_1,...,k_6) \in \Z^6 \setminus \{0\}$ such that:
	\begin{align}\label{conditions linc and k4 ll k3}
		k_1 -k_2 +k_3 -k_4 +k_5 -k_6 & = 0 & \textnormal{and}& & |k_{(4)}| & \leq \frac{1}{10} |k_{(3)}| 
	\end{align} 
	we have: 
	\begin{equation}\label{lower bound Omg three high three low}
		|\Omgk| \geq c_0 |k_{(1)}||k_{(3)}|
	\end{equation}
	Moreover, the analogous statement holds replacing $\Omega$ by its degenerated version $\Omega^\downarrow$.
\end{lem}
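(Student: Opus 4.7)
The plan is to classify the frequencies according to size and to use the linear constraint to extract a cancellation whenever the naive signs in $\Omega(\vec k)$ fail to give a direct lower bound. Relabel the three largest frequencies as $a,b,c$ with $|a|=|k_{(1)}|\geq|b|=|k_{(2)}|\geq|c|=|k_{(3)}|$, and let $\alpha,\beta,\gamma\in\{+1,-1\}$ be their respective signs in the alternating sum $L:=k_1-k_2+\cdots-k_6$. The remaining three frequencies each have absolute value at most $|c|/10$, hence contribute a term $\ell$ to $L$ with $|\ell|\leq 3|c|/10$ and a term $E$ to $\Omega(\vec k)$ with $|E|\leq 3|c|^2/100$. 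The constraint $L=0$ and the definition of $\Omega$ then read
\begin{align*}
\alpha a+\beta b+\gamma c+\ell &= 0, & \Omega(\vec k) &= \alpha a^2+\beta b^2+\gamma c^2+E.
\end{align*}

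The argument splits on the sign pattern $(\alpha,\beta,\gamma)$. If all three agree, then $|\alpha a^2+\beta b^2+\gamma c^2|=a^2+b^2+c^2\geq a^2\geq|a||c|$, and the error $|E|$ is negligible; the linear constraint plays no role. Otherwise exactly one sign differs. I will treat the three sub-cases by using $L=0$ to express the isolated large frequency in terms of the other two (plus a controlled error of order $|c|$), substitute into the relevant quadratic combination, and identify an explicit dominant cross term. For instance, when $\beta$ is the odd sign out, solving for $b=a+c+O(|c|)$ and expanding gives $a^2-b^2+c^2=-2ac+O(|a||c|/10+|c|^2)$, so that $|\Omega(\vec k)|\gtrsim|a||c|$ with an explicit constant. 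The case with $\alpha$ isolated is analogous, using $a=b+c+O(|c|)$ and noting that this in turn forces $|b|\gtrsim|a|$ so that $|bc|\gtrsim|a||c|$.

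The main obstacle is the sub-case where $\gamma$ is the odd sign out, since the candidate main term $a^2+b^2-c^2$ could a priori be as small as $c^2\ll|a||c|$ when $|a|\approx|b|$. Here the constraint is indispensable: it reads $\alpha(a+b)=-\gamma c-\ell$, which forces $|a+b|\leq 13|c|/10$. In particular $a$ and $b$ must have opposite signs with nearly equal magnitudes. Expanding
\[
a^2+b^2-c^2=(a+b)^2-2ab-c^2=-2ab+O(|c|^2),
\]
together with $|ab|\geq|a||c|$ (from $|b|\geq|c|$) and $|c|^2\leq|ab|$, produces $|\Omega(\vec k)|\geq c_0|a||c|$ with an explicit constant; this is the mechanism by which the resonant function fails to vanish in the regime where $k_{(1)}\sim k_{(2)}$ are much larger than $k_{(3)}$.

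For the degenerated statement, the same strategy applies to $\vec k\in\Z^4$ with the constraint $k_1-k_2+k_3-k_4=0$; in fact the bounds are sharper because only one small frequency contributes to $\ell$ and $E$ (so $|\ell|\leq|c|/10$ and $|E|\leq|c|^2/100$), and the three-sign case analysis is identical. Alternatively, one can invoke Lemma~\ref{lem factorization dgn Omega} to write $\Omega^\downarrow(\vec k)=2(k_2-k_1)(k_3-k_2)$ and check, by a short case analysis on which of the four positions contains the small frequency, that each factor has absolute value $\gtrsim|k_{(1)}|$ and $\gtrsim|k_{(3)}|$ respectively.
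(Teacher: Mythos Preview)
Your argument is correct and close in spirit to the paper's, but the bookkeeping is organized differently. The paper splits only on whether $k_{(1)}$ and $k_{(2)}$ carry the same or opposite signature in the alternating sum: in the same-signature case one has $|\Omega|\geq k_{(1)}^2+k_{(2)}^2-k_{(3)}^2-3k_{(4)}^2\geq\tfrac12 k_{(1)}^2$ directly; in the opposite-signature case the paper first shows $k_{(1)}$ and $k_{(2)}$ must have the same integer sign, then uses the factorization $|k_{(1)}^2-k_{(2)}^2|=|k_{(1)}-k_{(2)}|\,(|k_{(1)}|+|k_{(2)}|)$ together with the constraint-derived lower bound $|k_{(1)}-k_{(2)}|\geq|k_{(3)}|-3|k_{(4)}|\geq\tfrac{7}{10}|k_{(3)}|$. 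Your substitution method (eliminate one large frequency via $L=0$, isolate the cross term) achieves the same thing and is perfectly valid; the paper's factorization is just a bit more compact.

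One point of commentary is off, though it does not affect correctness. You call the sub-case ``$\gamma$ odd'' (i.e.\ $\alpha=\beta$) the main obstacle, arguing that $a^2+b^2-c^2$ ``could a priori be as small as $c^2$''. In fact since $|b|\geq|c|$ one has $a^2+b^2-c^2\geq a^2\geq|a||c|$ immediately, so this case is trivial and needs no appeal to the constraint; it is exactly the paper's easy case. The genuinely non-trivial cases are your ``$\alpha$ odd'' and ``$\beta$ odd'' (where $a^2-b^2$ can be small), and these are precisely where the paper spends its effort. Your substitution argument handles them correctly; just note that the error bound you quote, $O(|a||c|/10+|c|^2)$, undercounts the $2a\ell$ term, which is actually of size $\leq 0.6|a||c|$ --- still safely below the main term $2|a||c|$, so the conclusion stands.
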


\begin{rem}\label{rem Omgk = 0 implies N4 sim N3} 
	As a consequence, note that if $\Omgk = 0$ and $\linc$ and $k_{(3)} \neq 0$, then we have $|k_{(3)}| < 10 |k_{(4)}|$.
\end{rem}
\begin{proof}
	Once again, we only do the proof of Lemma~\ref{lem lower bound Omega when three high and three low freq} for $\Omega$, as the proof for $\Omega^\downarrow$ is identical. First, we assume that $k_{(1)}$ and $k_{(2)}$ have different signatures. Indeed, if we assume otherwise, then the proof of~\eqref{lower bound Omg three high three low} is easier, writing:
	\begin{equation*}
		|\Omgk| \geq k_{(1)}^2 + k_{(2)}^2 - k_{(3)}^2 - 3 k_{(4)}^2 \geq k_{(1)}^2 + k_{(2)}^2 - (1 + \frac{3}{100})k_{(3)}^2 \geq \frac{1}{2}k_{(1)}^2
	\end{equation*}
	Secondly, we observe that it is necessary that $k_{(1)}$ and $k_{(2)}$ have the same sign. If by contradiction we assume that $k_{(1)}$ and $k_{(2)}$ have opposite signs, then $|k_{(1)}- k_{(2)}| = |k_{(1)}|~+~| k_{(2)}| $, and we would deduce that 
	\begin{equation*}
		\linc \implies 0 \geq |k_{(1)}- k_{(2)}| - |k_{(3)}| - 3 |k_{(4)}| \geq |k_{(1)}| + |k_{(2)}| - (1+\frac{3}{10}) |k_{(3)}|
	\end{equation*} 
	and therefore we would obtain $0 \geq \frac{1}{2} |k_{(1)}|$, which is absurd beacause $|k_{(1)}| > 0$ since by assumption $\vec{k}\neq0$. To sum up, we are left with the case where $k_{(1)}$ and $k_{(2)}$ have different signatures and same sign. In this situation, we have:
	\begin{equation*}
		\begin{split}
			|\Omgk| \geq |k_{(1)}^2-k_{(2)}^2| -k_{(3)}^2 -3k_{(4)}^2 &\geq  |k_{(1)}-k_{(2)}||k_{(1)}+k_{(2)}| -(1 + \frac{3}{100})k_{(3)}^2 \\
			& = |k_{(1)}-k_{(2)}|(|k_{(1)}|+|k_{(2)}|) -(1 + \frac{3}{100})k_{(3)}^2
		\end{split}
	\end{equation*}
	Moreover, from the constraint $\linc$, we have $ |k_{(1)}-k_{(2)}| \geq |k_{(3)}|-3|k_{(4)}|$, which implies by assumption that $|k_{(1)}-k_{(2)}| \geq \frac{7}{10}|k_{(3)}|$. Plugging this in the inequaltiy above leads to
	\begin{equation*}
		\begin{split}
			|\Omgk| \geq \frac{7}{10}|k_{(3)}| (|k_{(1)}|+|k_{(2)}|) -(1 + \frac{3}{100})k_{(3)}^2 &\geq \frac{1}{10}|k_{(3)}| |k_{(1)}| + \frac{13}{10}k_{(3)}^2 -(1 + \frac{3}{100})k_{(3)}^2 \\
			&\geq \frac{1}{10}|k_{(3)}| |k_{(1)}|
		\end{split}
	\end{equation*}
	This completes the proof of Lemma~\ref{lem lower bound Omega when three high and three low freq}.
\end{proof}

\subsubsection{Counting bounds}  

\begin{lem}\label{lem counting bound}
There exists a constant $C>0$ such that for every dyadic integers $N_1,...,N_m$, every $\eps_1,...,\eps_m \in \{-1,+1\}$, and every $l \in \Z$,
	\begin{equation}\label{elementary counting bound}
		\sum_{k_1,...,k_m \in \Z} \1_{\eps_1 k_1 +\eps_2 k_2 +...+\eps_m k_m = l} \cdot \left( \prod_{j=1}^{m} \1_{|k_j| \sim N_j} \right) \leq C N_{(2)}N_{(3)}...N_{(m)}
	\end{equation}
\end{lem}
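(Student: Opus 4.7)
The plan is the standard \emph{solve for the largest variable} argument. Up to relabeling, assume $N_1 \geq N_2 \geq \cdots \geq N_m$, so that $N_j = N_{(j)}$ for each $j$. The linear constraint $\eps_1 k_1 + \cdots + \eps_m k_m = l$ can be rewritten as
\begin{equation*}
    k_1 = \eps_1\bigl(l - \eps_2 k_2 - \cdots - \eps_m k_m\bigr),
\end{equation*}
which uniquely determines $k_1$ once $(k_2,\ldots,k_m)$ are fixed. Hence the left-hand side of \eqref{elementary counting bound} is bounded by the number of tuples $(k_2,\ldots,k_m) \in \Z^{m-1}$ satisfying $|k_j| \sim N_j$ for $j=2,\ldots,m$ (we simply drop the compatibility constraint $|k_1| \sim N_1$, which can only decrease the count).

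For each $j \in \{2,\ldots,m\}$, the cardinality of $\{k \in \Z : |k| \sim N_j\}$ is $O(N_j)$ (bounded by $C$ if $N_j \lesssim 1$, and by $CN_j$ otherwise, so in either case by $C\langle N_j \rangle \lesssim C N_j$ up to adjusting constants in the dyadic regime $N_j \geq 1$). Multiplying these bounds yields
\begin{equation*}
    \sum_{k_1,\ldots,k_m \in \Z} \1_{\eps_1 k_1 + \cdots + \eps_m k_m = l} \prod_{j=1}^{m}\1_{|k_j|\sim N_j} \;\leq\; \prod_{j=2}^{m} C N_j \;=\; C^{m-1} N_{(2)} N_{(3)} \cdots N_{(m)},
\end{equation*}
which is the desired bound. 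The only mildly delicate point is the verification that the reduction to the unordered statement is legitimate: this comes from the invariance of the constraint under permutation of the indices, so one is always free to choose the index corresponding to $N_{(1)}$ as the one solved for. I do not foresee any real obstacle; this is a short and purely combinatorial verification.
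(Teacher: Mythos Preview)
Your proof is correct. The paper actually states this lemma without proof (it is considered elementary and well known), so there is nothing to compare against; your ``solve for the largest variable'' argument is exactly the standard one. The only cosmetic point is that your final constant $C^{m-1}$ depends on $m$, whereas the lemma is phrased with a single constant $C$; but since the lemma is only ever applied in the paper with a fixed small $m$ (namely $m=5$ or $m=6$), this is harmless.
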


\begin{lem}\label{lem counting bound 3 integers}
	For any $\eps >0$ there exists $C>0$ such that for any $N_1,N_2,N_3$ positive integers and $l,q \in \Z$,
	\begin{equation}\label{counting boung 3 integers with different signatures}
		\begin{split}
			\# \{ (k_1,k_2,k_3)  \in \Z^3 : \hspace{0.1cm}  k_1-&k_2+k_3=l,\hspace{0.1cm} k_1^2-k_2^2+k_3^2=q, \\
			& k_1,k_3 \neq k_2, \hspace{0.1cm} |k_1|\leq N_1, |k_2|\leq N_2, \hspace{0.1cm} |k_3|\leq N_3\} \leq C N_{(2)}^{\eps}
		\end{split}
	\end{equation}
	and,
		\begin{equation}\label{counting bound 3 integers with same signature}
		\begin{split}
			\# \{ (k_1,k_2,k_3)  \in \Z^3 : \hspace{0.1cm}  k_1+&k_2+k_3=l,\hspace{0.1cm} k_1^2+k_2^2+k_3^2=q, \\
			& \hspace{0.1cm} |k_1|\leq N_1, |k_2|\leq N_2, \hspace{0.1cm} |k_3|\leq N_3\} \leq C N_{(2)}^{\eps}
		\end{split}
	\end{equation}
	where $N_{(1)}\geq N_{(2)} \geq N_{(3)}$ is a non-increasing rearrangement of $N_1,N_2,N_3$.
\end{lem}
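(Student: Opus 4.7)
For the first inequality~\eqref{counting boung 3 integers with different signatures}, my plan is to reduce the counting problem to a divisor-counting problem via the substitution $a := k_1 - k_2$ and $b := k_3 - k_2$. Using the linear constraint $k_1 - k_2 + k_3 = l$, the triple can be rewritten as $k_1 = l - b$, $k_2 = l - a - b$, $k_3 = l - a$, and an elementary algebraic identity
\begin{equation*}
	l^2 - q = (k_1 - k_2 + k_3)^2 - (k_1^2 - k_2^2 + k_3^2) = 2(k_1 - k_2)(k_3 - k_2) = 2 a b
\end{equation*}
turns the quadratic constraint into $a b = (l^2 - q)/2 =: D$. The exclusions $k_1 \neq k_2$ and $k_3 \neq k_2$ become $a, b \neq 0$, and in particular $D \neq 0$. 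Thus the counted triples biject with nonzero divisor pairs $(a, b)$ of $D$ satisfying the box constraints $|l - a| \leq N_3$, $|l - b| \leq N_1$, and $|l - a - b| \leq N_2$; by the classical divisor bound $\tau(n) = O_\eps(n^\eps)$, the number of such pairs is $O_\eps(|D|^\eps)$.

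For the second inequality~\eqref{counting bound 3 integers with same signature}, the equations are fully symmetric in $(k_1, k_2, k_3)$, so by relabeling I may assume $|k_1| \leq N_{(3)}$. Given $k_1$, the pair $(k_2, k_3)$ satisfies $k_2 + k_3 = l - k_1$ and $k_2^2 + k_3^2 = q - k_1^2$, hence is determined up to order as the two roots of
\begin{equation*}
	t^2 - (l - k_1) t + \tfrac{1}{2}\bigl((l - k_1)^2 - (q - k_1^2)\bigr) = 0,
\end{equation*}
yielding at most $2$ triples per admissible $k_1$. Counting the admissible $k_1$ (those for which the discriminant is a non-negative perfect square) reduces, after completing the square, to counting representations of $2(3q - l^2)$ in the form $u^2 + 3 v^2$ with $u = 3 k_1 - l$; this representation count is also $O_\eps((\cdot)^\eps)$ by the standard ideal-counting argument in $\Z[\omega]$ with $\omega = e^{2 i \pi / 3}$.

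The central technical obstacle in both cases is to promote the naive exponent $N_{(1)}^\eps$---immediately obtained from the crude estimates $|D|, |3q - l^2| \lesssim N_{(1)}^2$---to the sharper $N_{(2)}^\eps$ required by the statement. My plan is to exploit the short-interval constraints on $(a, b)$ (respectively $(u, v)$) imposed by the box conditions, and to use the refined product bound $|D| \leq (N_1 + N_2)(N_2 + N_3) \lesssim N_{(1)} N_{(2)}$, which is valid in every ordering of $(N_1, N_2, N_3)$ except when $N_2 = N_{(1)}$; in this residual case both $N_1$ and $N_3$ are $\leq N_{(2)}$, so that $(a, b)$ is constrained to a window of size $O(N_{(2)})$ around $l$, and a direct short-interval divisor count delivers the desired exponent after a final renaming of $\eps$.
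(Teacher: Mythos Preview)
Your reduction is correct and matches the paper's: for \eqref{counting boung 3 integers with different signatures} the substitution $a=k_1-k_2$, $b=k_3-k_2$ is exactly the paper's factorization $2(k_1-l)(k_3-l)=l^2-q$ in disguise (indeed $k_1-l=-b$, $k_3-l=-a$), and for \eqref{counting bound 3 integers with same signature} your completion of the square to $u^2+3v^2=2(3q-l^2)$ is the representation count in $\Z[\omega]$ that the paper also invokes. Both reductions immediately give the bound $C_\eps N_{(1)}^\eps$, and the paper explicitly remarks that this weaker bound already suffices for all its applications.

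The gap is in your promotion from $N_{(1)}^\eps$ to $N_{(2)}^\eps$. The refined product bound $|D|\lesssim N_{(1)}N_{(2)}$ only yields $\tau(D)\lesssim_\eps (N_{(1)}N_{(2)})^\eps$, which is \emph{not} $O(N_{(2)}^\eps)$ when $N_{(1)}$ is unbounded relative to $N_{(2)}$, so in the non-residual case this bound alone is insufficient. In the residual case $N_2=N_{(1)}$ you appeal to a ``direct short-interval divisor count'', but there is no such general tool: the number of divisors of $D$ in an interval of length $L$ is not $O_\eps(L^\eps)$ (take $D=\mathrm{lcm}(1,\dots,L)$ and the interval $[1,L]$), so merely observing that $a,b$ lie in a window of width $O(N_{(2)})$ around $l$ does not by itself deliver the exponent.

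What the paper does instead is a clean dichotomy (which can in fact be recast in your variables): if $N_{(1)}\le 2N_{(2)}^{10}$ then the crude divisor bound with exponent $\eps/10$ already gives $N_{(2)}^\eps$; if $N_{(1)}>2N_{(2)}^{10}$ the paper applies the mean value theorem to the quadratic constraint to show that the largest coordinate is confined to a real interval of length strictly less than $1$, forcing at most two integer values and hence $O(1)$ solutions. This interval-length trick (attributed to \cite{burq2005bilinear}) is the missing ingredient in your outline; once you insert it, your framework and the paper's coincide.
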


\begin{rem}
--We indicate that similar counting bounds can be found in \cite{deng2021optimal}, Lemma 3.4. \\ 
-- In this paper, the weaker bound $C N_{(1)}^\eps$ (instead of $C N_{(2)}^\eps$) will suffice. We stated Lemma~\ref{lem counting bound 3 integers} with the bound $C N_{(2)}^\eps$ as it may prove useful in other contexts. \\
-- The constraint $k_1,k_3 \neq k_2$ in \eqref{counting boung 3 integers with different signatures} is crucial. Indeed, without this constraint, the bound in \eqref{counting boung 3 integers with different signatures} does not hold anymore. This is due to the fact that if we remove this constraint, then for any $N_1,N_2,N_3$ such that $N_1=N_2$, we can take $l=N_3$ and $q=N_3^2$ so that the inclusion
		\begin{equation*}
			\begin{split}
				\{  (k_1,k_1, N_3) : \hsp k_1 \in \Z, \hsp |k_1|\leq N_1 \} & \subset \{ (k_1,k_2,k_3) \in \Z^3 : \hspace{0.1cm}  k_1-k_2+k_3=N_3, \\
				& \hspace{0.1cm} k_1^2-k_2^2+k_3^2=N_3^2,\hsp |k_1|\leq N_1, |k_2|\leq N_2, \hspace{0.1cm} |k_3|\leq N_3\}
			\end{split}
		\end{equation*}
		would imply that 
		\begin{equation*}
			\begin{split}
				\# \{ (k_1,k_2,k_3)  \in \Z^3 : \hspace{0.1cm}  k_1-k_2+k_3&=N_3,\hspace{0.1cm}  k_1^2-k_2^2+k_3^2=N_3^2,\\
				& |k_1|\leq N_1, |k_2|\leq N_2, \hspace{0.1cm} |k_3|\leq N_3\} \geq 2 N_1,
			\end{split}
		\end{equation*}
		and this lower bound is incompatible with the upper bound in~\eqref{counting boung 3 integers with different signatures}.
\end{rem}

\begin{proof}[Proof of Lemma \ref{lem counting bound 3 integers}]
	Fix $\eps > 0$. \\
	\textbullet Firstly, we establish the bound \eqref{counting boung 3 integers with different signatures}. Set 
	\begin{equation*}
		\begin{split}
			\cS := \{ (k_1,k_2,k_3) & \in \Z^3 : \hspace{0.1cm}  k_1-k_2+k_3=l,\hspace{0.1cm} k_1^2-k_2^2+k_3^2=q, \\
			& k_1,k_3 \neq k_2, \hspace{0.1cm} |k_1|\leq N_1, |k_2|\leq N_2, \hspace{0.1cm} |k_3|\leq N_3\}
		\end{split}
	\end{equation*}
	
	Fix $(k_1,k_2,k_3) \in \cS$. Replacing $k_2$ by $k_1+k_3-l$ in the equation $k_1^2-k_2^2+k_3^2=q$ we get that 
	\begin{equation*}
		k_1^2 - (k_1+k_3-l)^2 + k_3^2 = q
	\end{equation*}
	On the other hand,
	\begin{equation*}
		\begin{split}
			k_1^2 - (k_1+k_3-l)^2 + k_3^2 &= -2(k_1k_3-k_1l-k_3l) - l^2 \\
			& = -2(k_1-l)(k_3-l) + l^2
		\end{split}
	\end{equation*}
	Putting together these two equations we obtain 
	\begin{equation}\label{counting bound, substituting k2 in the equation}
		2(k_1-l)(k_3-l) = l^2 - q
	\end{equation}
	We deduce that $2(k_1-l)$ and $k_3-l$ are two divisors of $l^2-q$. Denote $\cD(l^2-q)$ the set of divisors of $l^2-q$. We just have shown that the map :
	\begin{equation*}
		\begin{split}
			\varphi : & \hspace{0.6cm}\cS \longrightarrow \cD(l^2-q) \times \cD(l^2-q) \\
			& (k_1,k_2,k_3) \longmapsto \left(2(k_1 -l), k_3-l \right)
		\end{split}
	\end{equation*}
	is well defined. It is also injective since if $(k_1,k_2,k_3) \in \cS$ and $(n_1,n_2,n_3)\in \cS$ are such that $2(k_1-l)=2(n_1-l)$ and $k_3-l=n_3-l$ then $k_1=n_1$ and $k_3=n_3$, and using now the linear constraint $k_1-k_2+k_3=l=n_1-n_2+n_3$ we also have $k_2=n_2$. Now the injectivity of $\varphi$ implies that 
	\begin{equation*}
		\# \cS \leq \left( \#\cD(l^2-q)\right)^2
	\end{equation*}
	
	By equation \eqref{counting bound, substituting k2 in the equation} we see that we cannot have $l^2-q=0$. Indeed, if that happens, then we would have $k_1 = l$ or $k_3=l$, but then the linear constraint $k_1-k_2+k_3 = l$ would imply $k_3=k_2$ or $k_1=k_2$, which is excluded in the definition of $\cS$. Thus, since $l^2-q \neq 0$, the standard \textit{divisor bound} says there exists $C>0$ only depending on $\eps$ such that
	\begin{equation*}
		\# \cD(l^2-q) \leq C|l^2-q|^{\frac{\eps}{20}}
	\end{equation*}
	
	Moreover we have $|l^2-q| \leq l^2+|q| \leq 9N_{(1)}^2$, where we recall that $N_{(1)}\geq N_{(2)} \geq N_{(3)}$ is a non-increasing rearrangement of $N_1,N_2,N_3$. Thus we have 
	\begin{equation}\label{estimate number of divisors of q-l2}
		\# \cS \leq  \# \left(\cD(l^2-q)\right)^2 \leq C N_{(1)}^{\frac{\eps}{10}}
	\end{equation}
	where the constant $C$ changes but still depends only on $\eps$. \\
	We now consider two cases :\\
	
	-- If $N_{(1)} \leq 2 N_{(2)}^{10}$, then from \eqref{estimate number of divisors of q-l2} we get that
	\begin{equation*}
		\# \cS \leq C N_{(2)}^{\eps} 
	\end{equation*}
	which is the desired bound. \\
	
	-- If $N_{(1)} > 2N_{(2)}^{10}$, we adapt a trick that has already been used in \cite{burq2005bilinear} (Lemma 3.2). Without loss of generality, we assume that $N_1 = N_{(1)}$ (since the analysis of cases $N_2=N_{(1)}$ and $N_3=N_{(1)}$ are similar). We split the solution set $\cS$ as 
	\begin{equation*}
		\cS = \left( \cS \cap \{ |k_1| \leq 2N_{(2)}^{10}\} \right) \sqcup \left( \cS \cap \{ |k_1| > 2N_{(2)}^{10}\} \right) 
	\end{equation*}
	For the set $\cS \cap \{ |k_1| \leq 2N_{(2)}^{10}\}$, we apply the bound previously obtained; for the other set, $ \cS \cap \{ |k_1| > 2N_{(2)}^{10}\} \leq C N_{(2)}^{\eps}$, that we denote $\widetilde{\cS}$, we prove in what follows that $\# \widetilde{\cS} \leq 2$. \\
	Fix $(k_1,k_2,k_3) \in \widetilde{\cS}$. The equation 
	\begin{equation*}
		k_1^2 = q + k_2^2 - k_3^2
	\end{equation*}
	implies that 
	\begin{equation*}
		|k_1| \in [\sqrt{q-N_3^2},\sqrt{q+N_2^2}]
	\end{equation*}
	We observe that the length of this interval is less than 1. Indeed, by mean value inequality,
	\begin{equation*}
		\begin{split}
			\sqrt{q + N_2^2} - \sqrt{q-N_3^2} & \leq \frac{1}{2\sqrt{q-N_3^2}}(N_2^2+N_3^2) \\
			& \leq \frac{N_{(2)}^2}{\sqrt{q-N_3^2}}
		\end{split}
	\end{equation*}
	Since $q = k_1^2-k_2^2+k_3^2$, we have $q \geq 4N_{(2)}^{20}-N_{(2)}^2$ so that $q-N_3^2 \geq 4N_{(2)}^{20}-2N_{(2)}^2 \geq 2 N_{(2)}^{20}$. Thus we obtain 
	\begin{equation*}
		\sqrt{q + N_2^2} - \sqrt{q-N_3^2} \leq \frac{N_{(2)}^2}{\sqrt{q-N_3^2}} \leq \frac{N_{(2)}^2}{\sqrt{2}N_{(2)}^{10}} \leq \frac{1}{\sqrt{2}} < 1
	\end{equation*}
	As there is at most one integer in the interval $[\sqrt{q-N_3^2},\sqrt{q+N_2^2}]$, we have at most two choices for $k_1$. \\
	We finally remark that when $k_1$ is fixed then there is at most one choice for $k_2$ and $k_3$. In fact, if we fix $k_1$ and take any $k_2$ and $k_3$ such that $(k_1,k_2,k_3) \in \widetilde{\cS}$, then by using for example equation \eqref{counting bound, substituting k2 in the equation} we get $k_3 = l + \frac{l^2-q}{2(k_1-l)}$ and $k_2 = k_1+k_3-l = k_1 + \frac{l^2-q}{2(k_1-l)}$. \\
	
	In conclusion, we have shown that 
	\begin{equation*}
		\# \widetilde{\cS} \leq 2
	\end{equation*}
	so \eqref{counting boung 3 integers with different signatures} is proven. \\
	\textbullet The proof of \eqref{counting bound 3 integers with same signature} follows from the same strategy, considering again the two cases $N_{(1)} \lesssim N_{(2)}^{10}$ and $N_{(1)} \gtrsim N_{(2)}^{10}$. We point out that the slight difference is that we are lead to use the divisor bound in Euclidean ring $\Z[e^{2i\pi/3}]$. We also refer to \cite{book_tzirakis},  Lemma~2.11. 
\end{proof}

As a Corollary of Lemma~\ref{lem counting bound 3 integers} and Lemma~\ref{lemma psi estimate}, we have:

\begin{cor}\label{cor psi estimate and counting bound} Let $s > \frac{3}{4}$ and $\Psi_{2s} \in \{\Psi^{(0)}_{2s},\Psi^{(1)}_{2s} \}$. Let $\eps >0$ and $N_1,...,N_6$ dyadic integers. For $\vec{k}=(k_1,...k_6) \in \Z^6$, we denote:
	\begin{align*}
		C_0(\vec{k}) &:= \1_{\linc} \hsp \1_{k_3 \neq k_2,k_4} \hsp \hsp \big( \prod_{j=1}^6 \1_{|k_j| \sim N_j} \big), & 	C_1(\vec{k}) := \1_{\linc} \hsp \big( \prod_{j=1}^6 \1_{|k_j| \sim N_j} \big)
	\end{align*}
Then,
	\begin{equation}\label{psi estimate + counting bound different signatures}
		\sum_{k_1,...,k_6} C_0(\vec{k}) \Psi_{2s}(\vec{k})^2 \hsp   \leq C_{s,\eps}N_{(1)}^\epsilon [N_{(1)}^{2s-\frac{3}{2}}N_{(3)}^{\frac{3}{2}}(N_1 N_5 N_6)^\frac{1}{2}]^2
	\end{equation}
	and,
	\begin{equation}\label{psi estimate + counting bound same signature}
		\sum_{k_1,...,k_6} C_1(\vec{k}) \Psi_{2s}(\vec{k})^2 \hsp   \leq C_{s,\eps}N_{(1)}^\epsilon [N_{(1)}^{2s-\frac{3}{2}}N_{(3)}^{\frac{3}{2}}(N_{2} N_{4} N_{6} \wedge N_1 N_3 N_5)^\frac{1}{2}]^2
	\end{equation}
	where $C_\eps>0$ is a constant only depending in $\eps$, and where the symbol $\wedge$ denotes the $\min$.
\end{cor}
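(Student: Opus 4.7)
My plan is to combine the pointwise bound on $\psi_{2s}$ from Lemma~\ref{lemma psi estimate} with the counting bounds of Lemma~\ref{lem counting bound} and Lemma~\ref{lem counting bound 3 integers}. First, squaring~\eqref{psi estimate} would give
$$|\psi_{2s}(\vec{k})|^2 \leq C_s \bigl(N_{(1)}^{4(s-1)} \Omega(\vec{k})^2 + N_{(1)}^{4(s-1)} N_{(3)}^4 + N_{(3)}^{4s}\bigr),$$
from which I deduce $|\Psi^{(0)}_{2s}(\vec{k})|^2 \leq C_s (N_{(1)}^{4(s-1)} N_{(3)}^4 + N_{(3)}^{4s}) \leq C_s N_{(1)}^{4s-3} N_{(3)}^3$ (using $N_{(3)} \leq N_{(1)}$ together with $s > 3/4$), and
$$|\Psi^{(1)}_{2s}(\vec{k})|^2 \leq C_s \Bigl( N_{(1)}^{4(s-1)} + \frac{N_{(1)}^{4(s-1)} N_{(3)}^4 + N_{(3)}^{4s}}{\Omega(\vec{k})^2} \Bigr).$$
The argument then splits naturally into the cases $\Psi_{2s} = \Psi^{(0)}_{2s}$ and $\Psi_{2s} = \Psi^{(1)}_{2s}$.

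For~\eqref{psi estimate + counting bound different signatures}, the non-pairing condition $k_3 \neq k_2, k_4$ built into $C_0$ lets me invoke Lemma~\ref{lem counting bound 3 integers}\eqref{counting boung 3 integers with different signatures}. Fixing $(k_1, k_5, k_6)$ yields $N_1 N_5 N_6$ choices, and then the linear relation $-k_2+k_3-k_4 = l$ and, when imposed, the quadratic relation $-k_2^2+k_3^2-k_4^2 = q$ on $(k_2, k_3, k_4)$ fit the hypotheses of~\eqref{counting boung 3 integers with different signatures} after a sign-flip, giving at most $C_\eps N_{(1)}^\eps$ solutions per value of $q$. For the $\Psi^{(0)}_{2s}$ contribution (where $\Omega = 0$ supplies the quadratic constraint automatically), this combines with the pointwise bound $C_s N_{(1)}^{4s-3} N_{(3)}^3$ to give the target. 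For the $\Psi^{(1)}_{2s}$ contribution, I would treat the two $1/\Omega^2$ pieces by decomposing $\sum_{\vec{k}} = \sum_{\mu \neq 0} \sum_{\Omega(\vec{k}) = \mu}$ and invoking the per-$\mu$ count along with $\sum_{\mu \neq 0} \mu^{-2} \leq C$; this produces a contribution $C_{s,\eps} N_{(1)}^\eps N_1 N_5 N_6 (N_{(1)}^{4(s-1)} N_{(3)}^4 + N_{(3)}^{4s})$, dominated by the target since $N_{(3)} \leq N_{(1)}$ and $s > 3/4$.

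The main obstacle is the constant piece $N_{(1)}^{4(s-1)}$ in the bound on $|\Psi^{(1)}_{2s}|^2$: summing it via the above $\mu$-decomposition over $|\mu| \leq 6 N_{(1)}^2$ would produce an unacceptable factor $N_{(1)}^2$, leading to $N_{(1)}^{4s-2}$ in place of $N_{(1)}^{4s-3} N_{(3)}^3$. I would bypass this by bounding the constant piece via Lemma~\ref{lem counting bound} directly, using only the linear constraint, which gives $\sum_{\vec{k}} C_0(\vec{k}) \leq C N_{(2)} N_{(3)} N_{(4)} N_{(5)} N_{(6)}$ and hence a total contribution of $C_s N_{(1)}^{4(s-1)} N_{(2)} N_{(3)} N_{(4)} N_{(5)} N_{(6)}$. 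The elementary observation that $N_1 N_5 N_6 \geq N_{(4)} N_{(5)} N_{(6)}$ (any three of six dyadic integers have product at least that of the three smallest), together with $N_{(2)} \leq N_{(1)}$ and $N_{(3)} \geq 1$, shows this is dominated by $N_{(1)}^{4s-3} N_{(3)}^3 N_1 N_5 N_6$. The proof of~\eqref{psi estimate + counting bound same signature} is entirely analogous, with~\eqref{counting boung 3 integers with different signatures} replaced by~\eqref{counting bound 3 integers with same signature} (which carries no non-pairing hypothesis): I would fix either $(k_2, k_4, k_6)$ or $(k_1, k_3, k_5)$, count the three remaining same-signature frequencies, and take the minimum of the two resulting bounds to produce the $(N_2 N_4 N_6) \wedge (N_1 N_3 N_5)$ factor; since both $N_2 N_4 N_6$ and $N_1 N_3 N_5$ still majorize $N_{(4)} N_{(5)} N_{(6)}$, the same direct-counting trick handles the constant piece.
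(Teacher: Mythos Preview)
Your proposal is correct and follows essentially the same approach as the paper: both split into the cases $\Psi_{2s}=\Psi^{(0)}_{2s}$ and $\Psi_{2s}=\Psi^{(1)}_{2s}$, use Lemma~\ref{lemma psi estimate} for the pointwise bound, handle the constant piece $N_{(1)}^{4(s-1)}$ via the elementary count of Lemma~\ref{lem counting bound}, and treat the $\Omega^{-2}$ piece by the level-set decomposition $\sum_{\kappa\neq 0}\kappa^{-2}$ combined with Lemma~\ref{lem counting bound 3 integers}. The only cosmetic difference is that the paper records the intermediate bound $(N_{(1)}^{2(s-1)}\vee N_{(3)}^{2(s-1)})N_{(3)}^2\le N_{(1)}^{2s-3/2}N_{(3)}^{3/2}$ (its display~\eqref{crude estimate in N_1 N_3}) before squaring, whereas you square first and then simplify to $N_{(1)}^{4s-3}N_{(3)}^3$; these are equivalent.
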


\begin{rem}
	Note that the contsraint term $C_1(\vec{k})$ does not contain any condition of non-pairing. The bound in~\eqref{psi estimate + counting bound same signature} relies on the application of~\eqref{counting bound 3 integers with same signature}, which explains why it involves integers with the same signature.
\end{rem}

\begin{proof} The proof of~\eqref{psi estimate + counting bound different signatures} and~\eqref{psi estimate + counting bound same signature} is very similar. The difference is that we use either the counting bound~\eqref{counting boung 3 integers with different signatures} or~\eqref{counting bound 3 integers with same signature}. Thus, we only provide the proof of~\eqref{psi estimate + counting bound different signatures}. Now, we need to consider two cases.\\
	
-- \underline{Case 1:} When $\Psi_{2s} = \Psi_{2s}^{(0)}$. In this case, we start by applying~\eqref{psi estimate when Omg=0}:
\begin{equation*}
		\sum_{k_1,...,k_6 } C_0(\vec{k}) \Psi_{2s}^{(0)}(\vec{k})^2 \lesssim_s [(N_{(1)}^{2(s-1)} \vee N_{(3)}^{2(s-1)}) N_{(3)}^2]^2 \sum_{\substack{k_1,...,k_6 \\ \Omgz}} C_0(\vec{k})
\end{equation*}
Then, we use the counting bound~\eqref{counting boung 3 integers with different signatures} as follows:
\begin{multline}\label{sum C(k) Omgz}
	\sum_{\substack{k_1,...,k_6 \\ \Omgz}} C_0(\vec{k}) =  \sum_{\substack{k_1,k_5,k_6 \\ \forall i=1,5,6:\hsp |k_i|\sim N_i}}  \sum_{\substack{k_2,k_3,k_4 \\ \forall j=2,3,4:\hsp |k_j|\sim N_j}}  \1_{k_2-k_3+k_4=k_1+k_5-k_6} \hsp \1_{k_3 \neq k_2,k_4} \hsp \1_{k_2^2 - k_3^2 +k_4^2= k_1^2 + k_5^2 - k_6^2} \\
	\lesssim_\eps N_{(1)}^\eps \sum_{\substack{k_1,k_5,k_6 \\ \forall i=4,5,6:\hsp |k_i|\sim N_i}} 1 \hsp \lesssim_\eps N_{(1)}^\eps N_1 N_5 N_6 
\end{multline}
Moreover, since $s>\frac{3}{4}$, we have\footnote{This inequality is a bit crude; however, it will be acceptable later for the energy estimates. The reason we use it is that it simplifies the bound of Corollary~\ref{cor psi estimate and counting bound}. }:
\begin{equation}\label{crude estimate in N_1 N_3}
	(N_{(1)}^{2(s-1)} \vee N_{(3)}^{2(s-1)}) N_{(3)}^2 = (N_{(1)}^{2(s-1)} N_{(3)}^{\frac{1}{2}} \vee N_{(3)}^{2s - \frac{3}{2}}) N_{(3)}^{\frac{3}{2}} \leq (N_{(1)}^{2s - \frac{3}{2}} \vee N_{(3)}^{2s - \frac{3}{2}}) N_{(3)}^{\frac{3}{2}} = N_{(1)}^{2s - \frac{3}{2}}N_{(3)}^{\frac{3}{2}}
\end{equation}
	Therefore, combining the three inequalities above, we obtain:
	\begin{equation*}
			\sum_{k_1,...,k_6 } C_0(\vec{k}) \Psi_{2s}^{(0)}(\vec{k})^2 \lesssim_{s,\eps} N_{(1)}^\eps [N_{(1)}^{2s - \frac{3}{2}}N_{(3)}^{\frac{3}{2}} (N_1N_5N_6)^{\frac{1}{2}} ]^2
	\end{equation*}
--	\underline{Case 2:} When $\Psi_{2s} = \Psi_{2s}^{(1)}$.
In this case, we start by applying~\eqref{psi/Omg estimate}, and then we use~\eqref{crude estimate in N_1 N_3}:
 \begin{multline*}
 		\sum_{k_1,...,k_6} C_0(\vec{k}) \Psi_{2s}^{(1)}(\vec{k})^2  \lesssim_s [N_{(1)}^{2(s-1)}]^2 \sum_{k_1,...,k_6} C_0(\vec{k}) + [(N_{(1)}^{2(s-1)} \vee N_{(3)}^{2(s-1)}) N_{(3)}^2]^2 \sum_{\substack{k_1,...,k_6 \\ \Omgnz}} \frac{C_0(\vec{k})}{\Omgk^2} \\
 		\lesssim_s [N_{(1)}^{2(s-1)}]^2 \sum_{k_1,...,k_6} C_0(\vec{k}) + [N_{(1)}^{2s - \frac{3}{2}}N_{(3)}^{\frac{3}{2}}]^2 \sum_{\substack{k_1,...,k_6 \\ \Omgnz}} \frac{C_0(\vec{k})}{\Omgk^2}
 \end{multline*}
 On the one hand, by Lemma~\ref{lem counting bound},
 \begin{equation*}
 	\sum_{k_1,...,k_6} C_0(\vec{k}) \lesssim N_{(1)} N_{(3)} N_1 N_5 N_6
 \end{equation*}
 On the other hand, we write:
 \begin{equation*}
 	 \sum_{\substack{k_1,...,k_6 \\ \Omgnz}} \frac{C(\vec{k})}{\Omgk^2} =  \sum_{\kappa \in \Z \setminus \{ 0\}} \frac{1}{\kappa^2} \sum_{\substack{k_1,...,k_6 \\ \Omgk = \kappa }} C_0(\vec{k}) 
 \end{equation*}
 Thus, similarly to~\eqref{sum C(k) Omgz}, we deduce that:
 \begin{equation*}
 	 \sum_{\substack{k_1,...,k_6 \\ \Omgnz}} \frac{C(\vec{k})}{\Omgk^2} \lesssim_\eps \sum_{\kappa \in \Z \setminus \{ 0\}} \frac{1}{\kappa^2} N_{(1)}^\eps N_1 N_5 N_6  \lesssim_\eps N_{(1)}^\eps N_1 N_5 N_6 
 \end{equation*}
 Combining the inequalities above leads to:
 \begin{multline*}
 \sum_{k_1,...,k_6}C_0(\vec{k}) \Psi_{2s}^{(1)}(\vec{k})^2  \lesssim_{s,\eps} N_{(1)}^\eps [N_{(1)}^{2s - \frac{3}{2}}N_{(3)}^{\frac{1}{2}} (N_1 N_5 N_6)^{\frac{1}{2}}]^2 +  N_{(1)}^\eps [N_{(1)}^{2s-\frac{3}{2}} N_{(3)}^{\frac{3}{2}}(N_1 N_5 N_6)^{\frac{1}{2}}]^2 \\
 	\lesssim_{s,\eps}  N_{(1)}^\eps [N_{(1)}^{2s-\frac{3}{2}} N_{(3)}^{\frac{3}{2}}(N_1 N_5 N_6)^{\frac{1}{2}}]^2
 \end{multline*}
 Hence, the proof of Corollary~\ref{cor psi estimate and counting bound} is complete.
\end{proof}

	\section{Deterministic estimates}\label{section Deterministic estimates}Let us recall that we need to prove the exponential integrability properties from Proposition~\ref{prop exp integrability Fs,N} for the proof of quasi-invariance (Theorem~\ref{thm quasi-inv}) to be complete. The proof of this proposition will combine deterministic and probabilistic approaches. In this section, we build our deterministic toolbox for the proof of Proposition~\ref{prop exp integrability Fs,N}, the probabilistic one having already been elaborated in Section~\ref{section Preliminaries}. We start by an $L^2(\T)$-estimate for the non-linearity $F(U) = |U|^4U$ localized in frequency. The restriction $s>\frac{9}{10}$ for the quasi-invariance in Theorem~\ref{thm quasi-inv} comes partly from this estimate.

\begin{lem}\label{lem localized L2 estimate on |u|4u}
	Let $s>\frac{9}{10}$, $\sigma=s-\frac{1}{2}-$, and $R>0$. Let also:
	\begin{center}
		$G \in \{id, F\}$, where $ F : U \mapsto |U|^4U$
	\end{center}
	Then, for every $\eps>0$ small enough, there exists a constant $C_{\eps,R}>0$, such that for every dyadic integer $N_1$, and every $U \in C^\infty(\T)$, such that $\hsignorm{U} \leq R$, we have:
	\begin{equation}\label{localized L2 estimate on |u|4u}
		\norm{P_{N_1}G(U)}_{L^2(\T)} \leq C_{\eps,R} N_1^{n_s +\eps} 
	\end{equation} 
	where $n_s$ is the strictly negative number given by:
	\begin{equation}\label{ns}
		n_s =	\begin{cases}
			\frac{1}{2}-s \hspace{0.3cm} \textnormal{for $s>1$} \\
			\frac{9}{2}-5s \hspace{0.3cm} \textnormal{for $s\leq1$}
		\end{cases}
	\end{equation}
\end{lem}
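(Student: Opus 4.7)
The case $G = \mathrm{id}$ is immediate: by Plancherel,
\begin{equation*}
\|P_{N_1} U\|_{L^2} \leq N_1^{-\sigma}\|U\|_{H^\sigma} \leq R\, N_1^{-\sigma},
\end{equation*}
and since $\sigma$ may be taken as close as we wish to $s - \tfrac{1}{2}$, one checks that $-\sigma \leq n_s + \eps$ in both ranges of~\eqref{ns} (the inequality $\tfrac{1}{2}-s \leq \tfrac{9}{2}-5s$ in the $s \leq 1$ branch is equivalent to $s \leq 1$). The main content is thus the case $G = F$, which is what I describe below.

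For the quintic nonlinearity, my plan is to perform a Littlewood-Paley decomposition $U = \sum_{M \in 2^\N} P_M U$ and expand
\begin{equation*}
|U|^4 U = \sum_{M_1,\dots,M_5} P_{M_1} U \cdot \overline{P_{M_2} U} \cdot P_{M_3} U \cdot \overline{P_{M_4} U} \cdot P_{M_5} U.
\end{equation*}
For $P_{N_1}$ applied to such a product to be non-zero, the largest of the $M_j$'s, denoted $M_{(1)}$, must satisfy $M_{(1)} \gtrsim N_1$. I would then bound each quintilinear piece by placing the $M_{(1)}$-factor in $L^2$ and the remaining four in $L^\infty$ via H\"older's inequality, followed by the one-dimensional Bernstein bound $\|P_M U\|_{L^\infty} \lesssim M^{1/2}\|P_M U\|_{L^2}$ together with the trivial Sobolev estimate $\|P_M U\|_{L^2} \leq R\, M^{-\sigma}$. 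This produces
\begin{equation*}
\|P_{M_1} U \cdots P_{M_5} U\|_{L^2} \lesssim R^5\, M_{(1)}^{-\sigma} \prod_{j=2}^5 M_{(j)}^{\frac{1}{2}-\sigma}.
\end{equation*}

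The two regimes of~\eqref{ns} will then emerge from one and the same summation, split by the sign of $\tfrac{1}{2}-\sigma$. If $s > 1$ (so $\sigma > \tfrac{1}{2}$), each of the four inner sums $\sum_M M^{1/2-\sigma}$ converges, and the only factor left is $\sum_{M_{(1)} \gtrsim N_1} M_{(1)}^{-\sigma} \lesssim N_1^{-\sigma}$, matching $n_s = \tfrac{1}{2}-s$. If $s \leq 1$ (so $\sigma < \tfrac{1}{2}$ strictly, since $\sigma < s-\tfrac{1}{2}$), each inner sum is dominated by the top scale, $\sum_{M \leq M_{(1)}} M^{1/2-\sigma} \lesssim M_{(1)}^{1/2-\sigma}$, yielding the aggregated bound $R^5 M_{(1)}^{2-5\sigma}$. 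The final summation $\sum_{M_{(1)} \gtrsim N_1} M_{(1)}^{2-5\sigma} \lesssim N_1^{2-5\sigma}$ converges precisely thanks to the hypothesis $s > \tfrac{9}{10}$, which translates to $\sigma > \tfrac{2}{5}$, i.e.\ $2 - 5\sigma < 0$; this matches $n_s = \tfrac{9}{2}-5s$ up to an $\eps$-loss.

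The only real obstacle — and indeed the point of the lemma — is the sharpness of the summability condition $\sigma > \tfrac{2}{5}$ in the $s \leq 1$ regime: this is exactly the global well-posedness threshold of~\cite{Li_Wu_Xu_global}, and it is what forces the restriction $s > \tfrac{9}{10}$ in Theorem~\ref{thm quasi-inv}. Everything else is routine Littlewood-Paley bookkeeping.
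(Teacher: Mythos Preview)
Your proof is correct and reaches the same per-piece bound as the paper, namely
\[
M_{(1)}^{-\sigma}\prod_{j=2}^{5}M_{(j)}^{\frac12-\sigma},
\]
but arrives there by a different route. You work in physical space: H\"older places the top-frequency factor in $L^2$ and the four remaining factors in $L^\infty$, and Bernstein converts each $L^\infty$ norm into $M_{(j)}^{1/2}\|P_{M_{(j)}}U\|_{L^2}$. The paper instead stays on the Fourier side: after expanding the square of the $L^2$ norm it applies Cauchy--Schwarz in the inner frequency variables and invokes the elementary counting bound (Lemma~\ref{lem counting bound}) to obtain the factor $P_{(2)}\cdots P_{(5)}$, which is exactly the Fourier-side avatar of your four Bernstein losses. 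The subsequent dyadic summation, with the case split at $\sigma=\tfrac12$, is then identical in both arguments. Your approach is slightly more elementary in that it avoids any explicit counting lemma; the paper's version, on the other hand, is written so as to parallel the later multilinear estimates of Section~\ref{section Deterministic estimates}, where the Fourier-side organization is needed anyway.
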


\begin{rem}
	Note that we always have $\norm{P_{N_1}U}_{L^2} \lesssim N_1^{\frac{1}{2} -s +} \hsignorm{U}$. Besides, we have $n_s \geq \frac{1}{2}-s$. Then, \eqref{localized L2 estimate on |u|4u} is interesting only when $G(U) = |U|^4U$. The reason we stated Lemma~\ref{lem localized L2 estimate on |u|4u} for $G\in\{id,F\}$ is solely for consistency with the proof of Lemma~\ref{lem exp integrability Rs}.
\end{rem}

\begin{proof} We only need to consider the case $G(U)=|U|^4U$.\\
	By Parseval's theorem, and since the Fourier transform converts the product into convolution, we have:
	\begin{equation*}
		\norm{P_{N_1}(|U|^4U)}_{L^2(\T)}^2 = \sum_{|k_1|\sim N_1} \big| \sum_{\lincba} U_{p_1}\cjg{U_{p_2}}...U_{p_5}\big|^2
	\end{equation*}
	Then, we decompose dyadically the sum in the $p_j$'s, and deduce that:
	\begin{equation*}
		\norm{P_{N_1}(|U|^4U)}_{L^2(\T)}^2 = \sum_{|k_1|\sim N_1} \big| \sum_{P_1,...,P_5}\sum_{\lincba} U^{P_1}_{p_1}\cjg{U_{p_2}^{P_2}}...U^{P_5}_{p_5}\big|^2
	\end{equation*}
	where we adopt the notation $U_{p_j}^{P_j} = \1_{|p_j| \sim P_j} U_{p_j}$. Next, expanding the square leads to:
	\begin{multline}\label{2nd gen expanding the square}
		\norm{P_{N_1}(|U|^4U)}_{L^2(\T)}^2 \\ \leq   \sum_{\substack{P_1,...,P_5 \\ Q_1,...Q_5}} \sum_{|k_1|\sim N_1} \big( \sum_{\lincba} |U^{P_1}_{p_1}U_{p_2}^{P_2}...U^{P_5}_{p_5}|\big)\big( \sum_{q_1-q_2+...+q_5=k_1} |U^{Q_1}_{q_1}U_{q_2}^{Q_2}...U^{Q_5}_{q_5}|\big) 
	\end{multline}
	Let us now invoke $P_{(1)} \geq ... \geq P_{(5)}$ a non-increasing rearrangement of $P_1,...,P_5$; and similarly we invoke $Q_{(1)} \geq ... \geq Q_{(5)}$ a non-increasing rearrangement of $Q_1,...,Q_5$. Note that the constraints in the sums imply that the non-zero contributions occur only when $P_{(1)} \gtrsim N_1$ and $Q_{(1)} \gtrsim N_1$. In what follows, we prove that for $\eps>0$ small enough we have:
	\begin{equation}\label{CS 2nd gen}
		\big[ \sum_{|k_1|\sim N_1} \big( \sum_{\lincba} |U^{P_1}_{p_1}U_{p_2}^{P_2}...U^{P_5}_{p_5}|\big)^2 \big] ^{\frac{1}{2}}\lesssim N_{1}^{n_s+\eps} P_{(1)}^{-\frac{\eps}{2}}\hsignorm{U}^5
	\end{equation}
	Note that the same statement will also hold for the $Q_j$'s. To prove~\eqref{CS 2nd gen}, we use the Cauchy-Schwarz inequality, and then Lemma~\ref{lem counting bound}, as follows:
	\begin{multline*}
		\sum_{|k_1|\sim N_1} \big( \sum_{\lincba} |U^{P_1}_{p_1}U_{p_2}^{P_2}...U^{P_5}_{p_5}|\big)^2 \\
		\leq \sum_{|k_1|\sim N_1} \big( \sum_{\lincba} \prod_{j=1}^5 \1_{|p_j|\sim P_j}\big) \big( \sum_{\lincba} |U^{P_1}_{p_1}U_{p_2}^{P_2}...U^{P_5}_{p_5}|^2 \big) \\
		\lesssim P_{(2)}...P_{(5)}  \sum_{|k_1|\sim N_1}  \sum_{\lincba} |U^{P_1}_{p_1}U_{p_2}^{P_2}...U^{P_5}_{p_5}|^2 \\ \lesssim  P_{(2)}...P_{(5)} \prod_{j=1}^5 \norm{U^{P_j}}_{L^2(\T)}^2 \lesssim [P_{(1)}^{\frac{1}{2}-s+\delta}(P_{(2)}...P_{(5)})^{1-s}\hsignorm{U}^5]^2  \lesssim [P_{(1)}^{n_s + \delta}\hsignorm{U}^5]^2 
	\end{multline*}
	where $\delta>0$ is a parameter arbitrary small, which goes to 0 as $\sigma$ goes to $s-\frac{1}{2}$; and where we recall that $n_s$ is the strictly negative number given in~\eqref{ns}. Now, let $\eps>0$ small enough such that $n_s + \eps <0$. Then, let us choose $\delta<\frac{\eps}{2}$. Since $P_{(1)} \gtrsim N_1$, we have: 
	\begin{equation*}
		P_{(1)}^{n_s+\delta} \lesssim N_1^{n_s+ \frac{\eps}{2}+\delta} P_{(1)}^{-\frac{\eps}{2}} \lesssim N_1^{n_s + \eps}P_{(1)}^{-\frac{\eps}{2}}
	\end{equation*} 
	Using this in the inequality above leads to~\eqref{CS 2nd gen}. Let us now continue the estimate~\eqref{2nd gen expanding the square}. Using the Cauchy-Schwarz inequality in the $k_1$ summation, and then~\eqref{CS 2nd gen}, yields:
	\begin{equation*}
		\norm{P_{N_1}(|U|^4U)}_{L^2(\T)}^2 \lesssim [N_1^{n_s + \eps} \hsignorm{U}^5 ]^2 \sum_{\substack{P_1,...,P_5 \\ Q_1,...Q_5}} P_{(1)}^{-\frac{\eps}{2}} Q_{(1)}^{-\frac{\eps}{2}} \lesssim_\eps [N_1^{n_s + \eps} \hsignorm{U}^5 ]^2
	\end{equation*}
	which completes the proof of Lemma~\ref{lem localized L2 estimate on |u|4u} 
\end{proof}

The deterministic estimates that will follow are based on the following Strichartz estimates (for the propagator of linear Schrödinger equation), due to Bourgain in~\cite{Bourgain1993}.

\begin{thm}[Strichartz estimates]\label{Strichartz estimate} Let $\eps>0$. Then, for any function $g : \T \ra \C$, we have:
	\begin{equation*}
		\norm{e^{it\p_x^2}g}_{L^6(\T_t \times \T_x)} \leq C_{\eps} \norm{g}_{H^{\eps}(\T)}
	\end{equation*}
	and,
	\begin{equation*}
		\norm{e^{it\p_x^2}g}_{L^4(\T_t \times \T_x)} \leq C \norm{g}_{L^2(\T)}
	\end{equation*}
where $C>0$ is a universal constant, and $C_\eps >0$ is a constant only depending on $\eps$.
\end{thm}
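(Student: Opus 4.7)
\textbf{Proof proposal for Theorem~\ref{Strichartz estimate}.}

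The plan is to treat the two estimates separately, passing through Plancherel on $\T^2_{t,x}$ and the number-theoretic counting bounds of Section~\ref{section lower bound omega and counting bounds}. First I handle the $L^4$ estimate, which is elementary. Writing $f(t,x) = e^{it\p_x^2}g = \sum_k \widehat{g}(k) e^{i(kx+tk^2)}$, Plancherel on $\T_t \times \T_x$ gives
\begin{equation*}
    \|f\|_{L^4_{t,x}}^4 = \|f \bar{f}\|_{L^2_{t,x}}^2 = \sum_{n,m \in \Z} \Big| \sum_{\substack{k_1 - k_2 = n \\ k_1^2 - k_2^2 = m}} \widehat{g}(k_1) \cjg{\widehat{g}(k_2)}\Big|^2.
\end{equation*}
For $n \neq 0$, the system $k_1 - k_2 = n$, $(k_1-k_2)(k_1+k_2) = m$ determines $(k_1,k_2)$ uniquely, and for $n = 0$ it forces $k_1 = k_2$ and $m = 0$. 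Hence each pair $(n,m)$ has at most one admissible $(k_1,k_2)$, and the double sum telescopes to $\sum_{k_1,k_2} |\widehat{g}(k_1)|^2 |\widehat{g}(k_2)|^2 = \|g\|_{L^2}^4$. This yields the second inequality.

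For the $L^6$ estimate, I first prove the frequency-localized version: if $\widehat{g}$ is supported in $\{|k| \leq N\}$, then
\begin{equation*}
    \|e^{it\p_x^2}g\|_{L^6_{t,x}} \leq C_\eps N^\eps \|g\|_{L^2}.
\end{equation*}
Again by Plancherel applied to $f \bar f f$,
\begin{equation*}
    \|f\|_{L^6_{t,x}}^6 = \sum_{n,m} \Big| \sum_{\substack{k_1 - k_2 + k_3 = n \\ k_1^2 - k_2^2 + k_3^2 = m \\ |k_i| \leq N}} \widehat{g}(k_1) \cjg{\widehat{g}(k_2)} \widehat{g}(k_3) \Big|^2.
\end{equation*}
I split the inner sum into the non-pairing contribution $k_1, k_3 \neq k_2$ and the pairing contribution. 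For the non-pairing part, Cauchy-Schwarz yields
\begin{equation*}
    \Big| \sum \ldots \Big|^2 \leq D(n,m,N) \sum_{\substack{k_1 - k_2 + k_3 = n \\ k_1^2 - k_2^2 + k_3^2 = m \\ k_1,k_3 \neq k_2, \hsp |k_i|\leq N}} |\widehat{g}(k_1)|^2 |\widehat{g}(k_2)|^2 |\widehat{g}(k_3)|^2,
\end{equation*}
and Lemma~\ref{lem counting bound 3 integers} gives $D(n,m,N) \leq C_\eps N^\eps$. Summing freely over $(n,m)$ and then over $(k_1,k_2,k_3)$ produces $C_\eps N^\eps \|g\|_{L^2}^6$. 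For the pairing contribution where $k_1 = k_2$ (the case $k_3 = k_2$ is symmetric), the constraints collapse to $k_3 = n$ and $k_3^2 = m$, leaving $\sum_{k_1} |\widehat{g}(k_1)|^2 \widehat{g}(k_3)$; summing the square of this over $(n,m)$ controls it by $\|g\|_{L^2}^4 \|g\|_{l^2}^2 = \|g\|_{L^2}^6$, so this term is even better.

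Finally, I upgrade the localized bound to the statement by Littlewood-Paley: writing $g = \sum_{N \text{ dyadic}} P_N g$ and applying the frequency-localized estimate to each block,
\begin{equation*}
    \|e^{it\p_x^2}g\|_{L^6_{t,x}} \leq \sum_N C_{\eps/2} N^{\eps/2} \|P_N g\|_{L^2} \leq C_\eps \Big(\sum_N N^{-\eps}\Big)^{1/2} \Big(\sum_N N^{2\eps} \|P_N g\|_{L^2}^2\Big)^{1/2} \lesssim_\eps \|g\|_{H^\eps},
\end{equation*}
by Cauchy-Schwarz in $N$, where I have absorbed a power of $N^{\eps/2}$ into the Sobolev norm. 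The main obstacle is already packaged in Lemma~\ref{lem counting bound 3 integers}: the non-trivial content is the divisor bound, used there to count integer solutions of the joint linear/quadratic constraint; once that is in hand, the rest is a straightforward Plancherel-plus-Cauchy-Schwarz argument.
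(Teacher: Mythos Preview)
The paper does not prove Theorem~\ref{Strichartz estimate}; it simply attributes the result to Bourgain~\cite{Bourgain1993} and uses it as a black box. Your proposal therefore supplies an argument the paper omits, and your route (Plancherel on $\T_t\times\T_x$ followed by the divisor-bound counting of Lemma~\ref{lem counting bound 3 integers}) is essentially Bourgain's original one, so it is well suited here and there is no circularity: Lemma~\ref{lem counting bound 3 integers} is proved in the paper directly from the divisor bound, independently of Strichartz.

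There is one genuine slip in your $L^4$ argument. You write that ``each pair $(n,m)$ has at most one admissible $(k_1,k_2)$'', but this fails at $(n,m)=(0,0)$, where every diagonal pair $(k,k)$ solves the constraints. Consequently the sum does not reduce to $\sum_{k_1,k_2}|\widehat g(k_1)|^2|\widehat g(k_2)|^2$; the $(0,0)$ contribution is $\big(\sum_k|\widehat g(k)|^2\big)^2=\|g\|_{L^2}^4$, and the off-diagonal part contributes $\sum_{k_1\neq k_2}|\widehat g(k_1)|^2|\widehat g(k_2)|^2\leq\|g\|_{L^2}^4$. The conclusion $\|f\|_{L^4}^4\leq 2\|g\|_{L^2}^4$ still holds, so the estimate survives with $C=2^{1/4}$; just correct the sentence. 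The $L^6$ argument is fine: the non-pairing part is controlled by Lemma~\ref{lem counting bound 3 integers}, the pairing part collapses as you say, and the Littlewood--Paley summation at the end is carried out correctly.
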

From these, we can prove: 

\begin{lem}\label{Strichartz's trick}  Let $( f_{k_1}^{(1)} )_{k_1 \in \Z}$,...,$( f_{k_6}^{(6)} )_{k_6 \in \Z}$ any sequences  of complex numbers that satisfy $f^{(j)}_{k_j}=\1_{|k_j| \sim N_j} f^{(j)}_{k_j} $, where $N_1,...,N_6$ are dyadic integers. Then, \\
	
-- for every $\eps>0$, there exists a constant $C_{\eps} > 0$, only depending on $\eps$, such that for any $\kappa \in \Z$ :
		\begin{equation}\label{L6 Strichartz estimate}
			\sum_{\substack{\linc \\ \Omega(\vec{k})= \kappa}} \prod_{j=1}^6 |f_{k_j}^{(j)}| \leq C_{\eps} N_{(1)}^{\eps} \prod_{j=1}^6 \norm{f^{(j)}}_{l^2}
		\end{equation}
-- there exists a universal constant $C>0$, such that for any $\kappa \in \Z$ :
		\begin{equation}\label{L4 Strichartz estimate}
			\sum_{\substack{k_1-k_2+k_3-k_4 = 0 \\ \Omega^{\downarrow}(\vec{k})= \kappa}} \prod_{j=1}^4 |f_{k_j}^{(j)}| \leq C \prod_{j=1}^4 \norm{f^{(j)}}_{l^2}
		\end{equation}
\end{lem}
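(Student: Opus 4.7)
My plan is to recognize both sums as evaluations of Fourier coefficients of space-time products of free Schrödinger solutions, then apply Hölder with the Strichartz estimates from Theorem~\ref{Strichartz estimate}.

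For the first inequality, I would introduce initial data $g_j$ on $\T$ defined by $\widehat{g_j}(k) := |f^{(j)}_k|$ (so that $g_j$ has frequency support in $\{|k|\sim N_j\}$) and consider the free Schrödinger evolutions $U_j(t,x) := e^{it\partial_x^2} g_j(x) = \sum_{k_j} |f^{(j)}_{k_j}|\, e^{i(k_j x - k_j^2 t)}$. Forming the product $\Pi := U_1 \overline{U_2} U_3 \overline{U_4} U_5 \overline{U_6}$ on $\T_t \times \T_x$ and expanding, the factor $e^{i((k_1-k_2+\cdots-k_6)x - \Omega(\vec k)\, t)}$ appears. Hence the space-time Fourier coefficient of $\Pi$ at frequency $(-\kappa,0)$ is exactly
\begin{equation*}
  \widehat{\Pi}(-\kappa,0) \;=\; \sum_{\substack{\linc \\ \Omega(\vec k)=\kappa}} \prod_{j=1}^{6}|f^{(j)}_{k_j}|,
\end{equation*}
which is a nonnegative quantity bounded by $\|\Pi\|_{L^1(\T\times\T)}$ up to a universal constant.

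Next I would apply Hölder's inequality in space-time to get $\|\Pi\|_{L^1(\T\times\T)} \leq \prod_{j=1}^6 \|U_j\|_{L^6(\T\times\T)}$, and then the periodic $L^6$-Strichartz estimate of Theorem~\ref{Strichartz estimate} together with the frequency localization of $g_j$:
\begin{equation*}
  \|U_j\|_{L^6(\T\times\T)} \leq C_{\delta}\|g_j\|_{H^\delta(\T)} \leq C_{\delta}\, N_j^{\delta}\,\|f^{(j)}\|_{\ell^2}.
\end{equation*}
Multiplying these six bounds and using $N_j\leq N_{(1)}$ yields $\prod_j N_j^{\delta}\leq N_{(1)}^{6\delta}$, so choosing $\delta:=\varepsilon/6$ gives \eqref{L6 Strichartz estimate}.

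For the second inequality, the exact same scheme works with four factors instead of six, using the $L^4$-Strichartz estimate, which has \emph{no} derivative loss. Defining $U_j$ as before for $j=1,\ldots,4$, the space-time Fourier coefficient at $(-\kappa,0)$ of $U_1\overline{U_2}U_3\overline{U_4}$ is precisely the left-hand side of \eqref{L4 Strichartz estimate}, and Hölder together with $\|U_j\|_{L^4}\lesssim \|g_j\|_{L^2}=\|f^{(j)}\|_{\ell^2}$ delivers the bound with a universal constant. No step here is a genuine obstacle: the only point requiring care is the bookkeeping of complex conjugations so that the phases $e^{ik_jx-ik_j^2 t}$ combine into $e^{i(k_1-k_2+\cdots-k_6)x}\,e^{-i\Omega(\vec k)t}$, which forces the Fourier coefficient at $(-\kappa,0)$ to coincide with the desired constrained sum.
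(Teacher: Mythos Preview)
Your argument is correct and is precisely the standard derivation that the paper has in mind: the paper does not give its own proof here but simply refers to \cite{knez24transportlowregularitygaussian} for the implication Theorem~\ref{Strichartz estimate} $\Rightarrow$ Lemma~\ref{Strichartz's trick}, and that implication is exactly the Fourier-coefficient/H\"older/Strichartz argument you wrote out. The bookkeeping (conjugations, the identification of the constrained sum with the space-time Fourier coefficient at $(-\kappa,0)$, and the choice $\delta=\eps/6$) is all handled correctly.
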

We refer to \cite{knez24transportlowregularitygaussian} for a proof of Theorem~\ref{Strichartz estimate} $\implies$ Lemma~\ref{Strichartz's trick}. The next two lemmas will serve as reference estimates for handling deterministic terms in the Proof of Proposition~\ref{prop exp integrability Fs,N}. They are meant to be combined with Lemma~\ref{lem localized L2 estimate on |u|4u}.
\begin{lem}[Deterministic dyadic estimates]\label{lem deterministic estimate via Strichartz's trick} Let $s >0$. Let $\Psi_{2s} \in \{ \Psi_{2s}^{(0)}, \Psi_{2s}^{(1)}\}$ and $\Psi^\downarrow_s \in \{ \Psi_{2s}^{(0),\downarrow}, \Psi_{2s}^{(1),\downarrow}\}$.
 Let $( f_{k_1}^{(1)} )_{k_1 \in \Z}$,...,$( f_{k_6}^{(6)} )_{k_6 \in \Z}$ be sequences  of complex numbers that satisfy $f^{(j)}_{k_j}~=~\1_{|k_j| \sim N_j} f^{(j)}_{k_j} $, where $N_1,...,N_6$ are dyadic integers. Then,
	 for every $\eps > 0$, there exists a constant $C_{\eps,s} > 0$, only depending on $\eps$ and $s$, such that
		\begin{equation}\label{L6 deterministic estimate}
			\sum_{\linc } | \Psi_{2s}(\vec{k}) | \prod_{j=1}^6 | f_{k_j}^{(j)} | \leq C_{\eps,s}N_{(1)}^\eps (N_{(1)}^{2(s-1)} \vee N_{(3)}^{2(s-1)}) N_{(3)}^{2} \prod_{j=1}^6 \norm{f^{(j)}}_{l^2}
		\end{equation}
		and similarly for $N_1,..,N_4$ dyadic integers,
		\begin{equation}\label{L4 deterministic estimate}
				\sum_{k_1-k_2+k_3-k_4=0} |\Psi_{2s}^\downarrow(\vec{k})| \prod_{j=1}^4 | f_{k_j}^{(j)} | \leq  C_{s} (N_{(1)}^{2(s-1)} \vee N_{(3)}^{2(s-1)}) N_{(3)}^{2} \prod_{j=1}^4\norm{f^{(j)}}_{l^2}
		\end{equation}
	where $C_s>0$ is a constant only depending on $s$.
\end{lem}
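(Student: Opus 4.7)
The plan is to combine the pointwise bounds on $|\Psi_{2s}(\vec k)|$ furnished by Lemma~\ref{lemma psi estimate} with the Strichartz-type sums of Lemma~\ref{Strichartz's trick}, supplemented by an elementary Hölder--Bernstein estimate to control the residual sum $\sum_{\linc}\prod_{j=1}^{6}|f^{(j)}_{k_j}|$ that carries no $\Omega$-constraint.

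For~\eqref{L6 deterministic estimate} I would treat the two cases $\Psi_{2s}=\Psi^{(0)}_{2s}$ and $\Psi_{2s}=\Psi^{(1)}_{2s}$ separately. When $\Psi_{2s}=\Psi^{(0)}_{2s}$, the pointwise bound~\eqref{psi estimate when Omg=0} gives $|\Psi^{(0)}_{2s}(\vec k)|\lesssim_s (N_{(1)}^{2(s-1)}\vee N_{(3)}^{2(s-1)})N_{(3)}^{2}\mathbf{1}_{\Omega(\vec k)=0}$, and the desired estimate follows from~\eqref{L6 Strichartz estimate} applied with $\kappa=0$. When $\Psi_{2s}=\Psi^{(1)}_{2s}$, I split~\eqref{psi/Omg estimate} into the two summands $N_{(1)}^{2(s-1)}\mathbf{1}_{\Omega\neq 0}$ and $(N_{(1)}^{2(s-1)}\vee N_{(3)}^{2(s-1)})N_{(3)}^2/|\Omega(\vec k)|$. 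For the second, I decompose the sum according to $\Omega(\vec k)=\kappa\in\Z\setminus\{0\}$, apply~\eqref{L6 Strichartz estimate} uniformly in $\kappa$, and bound $\sum_{0<|\kappa|\lesssim N_{(1)}^{2}}|\kappa|^{-1}\lesssim \log N_{(1)}\lesssim_{\eps}N_{(1)}^{\eps}$, absorbing the logarithmic loss into the $N_{(1)}^{\eps}$ of the target bound.

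The first summand reduces the task to the standalone inequality $\sum_{\linc}\prod_{j=1}^{6}|f^{(j)}_{k_j}|\lesssim N_{(3)}^{2}\prod_{j=1}^{6}\|f^{(j)}\|_{l^2}$, which I would prove by introducing $h_j(x):=\sum_{k}|f^{(j)}_{k}|e^{ikx}$, rewriting the sum via Parseval as a constant multiple of $\int_{\T}h_1\bar h_2 h_3\bar h_4 h_5\bar h_6\,dx$, and applying Hölder with exponents $(2,2,\infty,\infty,\infty,\infty)$. Placing the two highest-frequency factors in $L^2$ (where $\|h_j\|_{L^2}\sim\|f^{(j)}\|_{l^2}$ by Parseval) and invoking Bernstein's inequality $\|h_j\|_{L^\infty}\lesssim N_j^{1/2}\|f^{(j)}\|_{l^2}$ for the remaining four yields the factor $(N_{(3)}N_{(4)}N_{(5)}N_{(6)})^{1/2}\leq N_{(3)}^{2}$, as required.

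Estimate~\eqref{L4 deterministic estimate} would follow from the same three-step scheme, now using the degenerate analogs of the pointwise bounds from Lemma~\ref{lemma psi estimate}, with~\eqref{L4 Strichartz estimate} in place of~\eqref{L6 Strichartz estimate}, and with Hölder applied to four functions (two in $L^2$ and two in $L^\infty$) producing the factor $(N_{(3)}N_{(4)})^{1/2}\leq N_{(3)}^{2}$. The main obstacle I anticipate throughout is the harmonic sum $\sum_{\kappa}1/|\kappa|$ appearing in the analysis of the $\Psi^{(1)}$ piece: in~\eqref{L6 deterministic estimate} the resulting logarithmic loss is comfortably absorbed into $N_{(1)}^{\eps}$, while for~\eqref{L4 deterministic estimate} a little extra care is needed, for instance by splitting off the subcase $N_{(4)}\leq N_{(3)}/10$ in which Lemma~\ref{lem lower bound Omega when three high and three low freq} provides the sharp lower bound $|\Omega^{\downarrow}(\vec k)|\gtrsim N_{(1)}N_{(3)}$ and removes the need to sum over $\kappa$.
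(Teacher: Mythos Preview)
Your argument for \eqref{L6 deterministic estimate} is correct and essentially identical to the paper's. The only cosmetic difference is how you bound the constraint-free sum $\sum_{\linc}\prod_j|f^{(j)}_{k_j}|$: you pass to physical space via Parseval and apply H\"older/Bernstein, whereas the paper stays on the Fourier side, applies Cauchy--Schwarz in the $k_{(1)}$-summation, and uses $\|f^{(j)}\|_{l^1}\le N_j^{1/2}\|f^{(j)}\|_{l^2}$ for the four lowest-frequency factors. Both routes produce the same factor $(N_{(3)}N_{(4)}N_{(5)}N_{(6)})^{1/2}\le N_{(3)}^2$.

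For \eqref{L4 deterministic estimate} you are more careful than the paper (which simply declares the proof ``identical''), and you are right that the harmonic sum $\sum_\kappa|\kappa|^{-1}$ cannot be absorbed since the target carries no $N_{(1)}^\eps$. However, your fix is incomplete: splitting off $N_{(4)}\le N_{(3)}/10$ and invoking Lemma~\ref{lem lower bound Omega when three high and three low freq} handles that subcase, but you say nothing about the complementary regime $N_{(4)}\sim N_{(3)}$, where the logarithmic loss persists if one only uses~\eqref{psi/Omg estimate}. Two further observations close the gap. First, when $N_{(4)}\sim N_{(3)}$ but $N_{(3)}\ll N_{(1)}$, the factorization $\Omega^\downarrow(\vec k)=-2(k_1-k_2)(k_3-k_2)=2(k_1-k_2)(k_1-k_4)$ forces one factor to have size $\gtrsim N_{(1)}$, hence $|\Omega^\downarrow|\gtrsim N_{(1)}$ and no $\kappa$-sum is needed. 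Second, when all four $N_j$ are comparable to $N$, the bound~\eqref{psi/Omg estimate} is too crude; writing $\psi^\downarrow_{2s}$ as a second-order difference (using $k_1-k_2=k_4-k_3$) yields the sharper pointwise estimate $|\psi^\downarrow_{2s}(\vec k)|\lesssim_s N^{2(s-1)}|k_1-k_2||k_3-k_2|\sim N^{2(s-1)}|\Omega^\downarrow(\vec k)|$, so that $|\Psi^{(1),\downarrow}_{2s}|\lesssim_s N^{2(s-1)}$ directly and again no $\kappa$-sum arises. Alternatively, since in this paper \eqref{L4 deterministic estimate} is only invoked in the pairing analysis where an extra factor $N_{(1)}^{1-2s+}$ or $N_{(3)}^{1-2s+}$ is available, an $N_{(1)}^\eps$ loss would in fact be harmless.
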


In the situation where $N_{(4)} \ll N_{(3)}$, we have the following refined statement:
\begin{lem}[Refined deterministic dyadic estimates]\label{lem refined deterministic estimate via Strichartz's trick} 	Let $s >0$. Let $\Psi_{2s} \in \{\Psi_{2s}^{(0)},\Psi_{2s}^{(1)} \}$ and $\Psi^\downarrow_s \in \{ \Psi_{2s}^{(0),\downarrow}, \Psi_{2s}^{(1),\downarrow}\}$. Let $N_1,...,N_6$ be dyadic integers such that  $N_{(4)} \ll N_{(3)}$. Let $( f_{k_1}^{(1)} )_{k_1 \in \Z}$,...,$( f_{k_6}^{(6)} )_{k_6 \in \Z}$ sequences such that $f^{(j)}$is localized at the frequency $N_j$. There exists a constant $C_s >0$, only depending on $s$, such that	\begin{equation}\label{L6 refined deterministic estimate}
		\sum_{\linc} |\Psi_{2s}(\vec{k})| \prod_{j=1}^6 | f_{k_j}^{(j)} | \leq C_{s} (N_{(1)}^{2(s-1)} \vee N_{(3)}^{2(s-1)}) (N_{(3)}N_{(4)}N_{(5)}N_{(6)})^{\frac{1}{2}} \prod_{j=1}^6 \norm{f^{(j)}}_{l^2}
\end{equation} 
	Similarly, if  $N_1,...,N_4$ are dyadic integers such that  $N_{(4)} \ll N_{(3)}$, then
		\begin{equation}\label{L4 refined deterministic estimate}
		\sum_{k_1-k_2+k_3-k_4=0} |\Psi_{2s}^{\downarrow}(\vec{k})| \prod_{j=1}^4 | f_{k_j}^{(j)} | \leq  C_{s} (N_{(1)}^{2(s-1)} \vee N_{(3)}^{2(s-1)})(N_{(3)} N_{(4)})^{\frac{1}{2}} \prod_{j=1}^4\norm{f^{(j)}}_{l^2}
	\end{equation}
\end{lem}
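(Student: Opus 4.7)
The statement is the same as Lemma~\ref{lem deterministic estimate via Strichartz's trick} except that, thanks to the gap condition $N_{(4)}\ll N_{(3)}$, the factor $N_{(3)}^2$ is to be replaced by the smaller $(N_{(3)}N_{(4)}N_{(5)}N_{(6)})^{1/2}$ in~\eqref{L6 refined deterministic estimate} (and analogously in the degenerate case). The key observation is that the gap condition forces a strong lower bound on the resonant function via Lemma~\ref{lem lower bound Omega when three high and three low freq}, so we can bypass Lemma~\ref{Strichartz's trick} entirely and work by pure Cauchy--Schwarz on the linear constraint $\linc$.

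I will split on $\Psi_{2s}\in\{\Psi^{(0)}_{2s},\Psi^{(1)}_{2s}\}$. For $\Psi^{(0)}_{2s}$ the sum is identically zero on its support: indeed $\Omgz$ together with $\linc$ forces, by Remark~\ref{rem Omgk = 0 implies N4 sim N3}, either $k_{(3)}=0$ or $|k_{(3)}|<10|k_{(4)}|$, and interpreting $N_{(4)}\ll N_{(3)}$ with enough room (e.g.\ $N_{(3)}>20\,N_{(4)}$) rules out both alternatives. For $\Psi^{(1)}_{2s}$, Lemma~\ref{lem lower bound Omega when three high and three low freq} provides $|\Omgk|\gtrsim N_{(1)}N_{(3)}$, and plugging this into the bound~\eqref{psi/Omg estimate} yields the pointwise estimate
\begin{equation*}
|\Psi^{(1)}_{2s}(\vec{k})|\lesssim_s N_{(1)}^{2(s-1)}+(N_{(1)}^{2(s-1)}\vee N_{(3)}^{2(s-1)})\cdot\frac{N_{(3)}}{N_{(1)}}\lesssim_s N_{(1)}^{2(s-1)}\vee N_{(3)}^{2(s-1)},
\end{equation*}
which is precisely the prefactor in~\eqref{L6 refined deterministic estimate}. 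It then remains to show that $\sum_{\linc}\prod_{j=1}^6|f^{(j)}_{k_j}|\leq (N_{(3)}N_{(4)}N_{(5)}N_{(6)})^{1/2}\prod_j\|f^{(j)}\|_{l^2}$. I freeze the four lowest-ranked frequencies $k_{(3)},\dots,k_{(6)}$: the constraint $\linc$ becomes a single linear equation in $(k_{(1)},k_{(2)})$, leaving exactly one free parameter. Cauchy--Schwarz on that parameter costs $\|f^{(j_{(1)})}\|_{l^2}\|f^{(j_{(2)})}\|_{l^2}$, and Cauchy--Schwarz on each of the four remaining sums produces the factor $N_{(j)}^{1/2}\|f^{(\cdot)}\|_{l^2}$ since each $f^{(j)}$ is supported in $|k|\sim N_{(j)}$.

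The degenerated estimate~\eqref{L4 refined deterministic estimate} will follow by exactly the same strategy, using the factorization of Lemma~\ref{lem factorization dgn Omega} and the $\Omega^\downarrow$-analog of Lemma~\ref{lem lower bound Omega when three high and three low freq}. For $\Psi^{(0),\downarrow}_{2s}$, the relation $\Omega^\downarrow(\vec{k})=2(k_2-k_1)(k_3-k_2)=0$ combined with $k_1-k_2+k_3-k_4=0$ forces either $k_1=k_2,\,k_3=k_4$ or $k_2=k_3,\,k_1=k_4$; in both configurations $\psi_{2s}^{\downarrow}(\vec{k})=0$, so the sum vanishes. For $\Psi^{(1),\downarrow}_{2s}$ the same pointwise bound $|\Psi^{(1),\downarrow}_{2s}(\vec{k})|\lesssim_s N_{(1)}^{2(s-1)}\vee N_{(3)}^{2(s-1)}$ is produced; freezing $k_{(3)},k_{(4)}$ leaves a single free parameter in $(k_{(1)},k_{(2)})$, and two rounds of Cauchy--Schwarz yield the desired factor $(N_{(3)}N_{(4)})^{1/2}\prod_j\|f^{(j)}\|_{l^2}$. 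The only genuine content is the lower bound of Lemma~\ref{lem lower bound Omega when three high and three low freq}; I do not expect any serious obstacle beyond bookkeeping and a careful interpretation of the dyadic gap $N_{(4)}\ll N_{(3)}$ in Case~1.
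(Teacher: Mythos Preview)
Your proposal is correct and follows essentially the same route as the paper: both proofs hinge on the lower bound $|\Omgk|\gtrsim N_{(1)}N_{(3)}$ from Lemma~\ref{lem lower bound Omega when three high and three low freq}, the pointwise bound~\eqref{psi/Omg estimate} on $\Psi^{(1)}_{2s}$, and the Cauchy--Schwarz argument on the linear constraint (the paper's display~\eqref{determinstic estimate without Omg}). The only difference is organizational: the paper first splits into the two pieces $\mathbf{I}$ and $\mathbf{II}$ coming from~\eqref{psi/Omg estimate} and then feeds the $\Omega$ lower bound into $\mathbf{II}$, whereas you insert the $\Omega$ lower bound directly into~\eqref{psi/Omg estimate} to obtain the clean pointwise bound $|\Psi^{(1)}_{2s}(\vec{k})|\lesssim_s N_{(1)}^{2(s-1)}\vee N_{(3)}^{2(s-1)}$ before summing---this is marginally tidier but amounts to the same computation.
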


\begin{rem}
	The estimates \eqref{L6 refined deterministic estimate} and \eqref{L4 refined deterministic estimate} are relevant only when $\Psi_{2s} = \Psi_{2s}^{(1)}$ and $\Psi^\downarrow_{2s} = \Psi_{2s}^{(1),\downarrow}$, because when $\Psi_{2s} = \Psi_{2s}^{(0)}$ or $\Psi^\downarrow_{2s} = \Psi_{2s}^{(0),\downarrow}$, then the contributions in the sums in~\eqref{L6 refined deterministic estimate} and~\eqref{L4 refined deterministic estimate} are zero, since we cannot have $|k_{(4)}| \ll |k_{(3)}|$ thanks to Remark~\ref{rem Omgk = 0 implies N4 sim N3}. We stated these estimates for  $\Psi_{2s} \in \{\Psi_{2s}^{(0)},\Psi_{2s}^{(1)} \}$ and $\Psi^\downarrow_s \in \{ \Psi_{2s}^{(0),\downarrow}, \Psi_{2s}^{(1),\downarrow}\}$ only for consistency with the proof of Lemma~\ref{lem exp integrability Rs}.
\end{rem}

\begin{proof}[Proof of Lemma \ref{lem deterministic estimate via Strichartz's trick} and Lemma \ref{lem refined deterministic estimate via Strichartz's trick}] Here, we only provide the proof of~\eqref{L6 deterministic estimate} and~\eqref{L6 refined deterministic estimate}; the proof of~\eqref{L4 deterministic estimate} and~\eqref{L4 refined deterministic estimate} is identical when using the lemmas~\ref{lemma psi estimate},~\ref{lem lower bound Omega when three high and three low freq} and~\ref{Strichartz's trick} in their degenerated version. \\
	
The arrangement of the frequencies does not play a role here, so we assume for more readability that $N_j=N_{(j)}$, for $j=1,...,6$.\\

\textbullet First we prove~\eqref{L6 deterministic estimate}. We need to consider two cases: \\
-- \textit{Case 1:} When $\Psi_{2s}(\vec{k}) = \Psi^{(0)}_{2s}(\vec{k})$. This case is the easiest; indeed, we obtain~\eqref{L6 deterministic estimate} simply by applying the estimate on $\Psi^{(0)}_{2s}$ in~\eqref{psi estimate when Omg=0}, and then the $L^6$-Strichartz's estimate~\eqref{L6 Strichartz estimate}. \\
 
\noindent-- \textit{Case 2:} When $\Psi_{2s}(\vec{k}) = \Psi^{(1)}_{2s}(\vec{k})$. We start by applying Lemma~\ref{lemma psi estimate} :
	\begin{equation}\label{application psi/Omg estimate}
			\sum_{\linc} |\Psi_{2s}^{(1)}(\vec{k})| \prod_{j=1}^6 | f_{k_j}^{(j)} |  \leq C_s \big(  N_{1}^{2(s-1)} \hsp \textbf{I} +  (N_{1}^{2(s-1)} \vee N_{3}^{2(s-1)}) N_{3}^2 \hsp \textbf{II} \big)
	\end{equation}
	where we denote 
	\begin{align*}
		\textbf{I} &:=\sum_{\substack{\linc \\ \Omgnz}} \prod_{j=1}^6 | f_{k_j}^{(j)} | & &\textnormal{and} &	\textbf{II} &:= \sum_{\substack{\linc \\ \Omgnz}} \frac{1}{|\Omega(\vec{k})|}\prod_{j=1}^6 | f_{k_j}^{(j)} |
	\end{align*}
	Now, we estimate separately \textbf{I} and \textbf{II}. \\
	
	--\underline{Estimate for \textbf{I} :} To treat this term, we remove the constraint $\Omgnz$ and we use the Cauchy-Schwarz inequality in the $k_{1}$ summation as follows:
	\begin{multline}\label{determinstic estimate without Omg}
		\textbf{I} \leq \sum_{k_3,k_4,k_5,k_6} |f^{(3)}_{k_3}f^{(4)}_{k_4}f^{(5)}_{k_5}f^{(6)}_{k_6}| \sum_{k_1} |f^{(1)}_{k_1}||f^{(2)}_{k_1+k_3...-k_6}| \leq \prod_{j=3}^6 \norm{f^{(j)}}_{l^1} \norm{f^{(1)}}_{l^2} \norm{f^{(2)}}_{l^2} \\ \lesssim  \left( N_{3} N_{4}N_{5} N_{6} \right)^{\frac{1}{2}} \prod_{j=1}^6 \norm{f^{(j)}}_{l^2}
	\end{multline}
	and $N_1^{2(s-1)} \left( N_{3} N_{4}N_{5} N_{6} \right)^{\frac{1}{2}} \leq N_1^{2(s-1)} N_3^2$, so $N_1^{2(s-1)}\textbf{I}$ is indeed bounded by the (RHS) of~\eqref{L6 deterministic estimate}.\\
		--\underline{Estimate for \textbf{II} :} Firstly, we observe that  $|\Omega(\vec{k})|\leq 6 N_{1}^2$, and then we apply Lemma~\ref{Strichartz's trick}:
		\begin{multline*}
			\textbf{II} \leq \sum_{ 6N_{1}^2 \geq |\kappa| \geq 1} \frac{1}{|\kappa|} \sum_{\substack{\linc \\ \Omgeqkp}} \prod_{j=1}^6 |f_{k_j}^{(j)}| \leq C_\eps N_1^{\frac{\eps}{2}} \prod_{j=1}^6 \norm{f^{(j)}}_{l^2} \sum_{ 6N_{1}^2 \geq |\kappa| \geq 1} \frac{1}{|\kappa|} \\
			\leq C_\eps N_1^{\frac{\eps}{2}} \log(N_1) \prod_{j=1}^6 \norm{f^{(j)}}_{l^2} \leq 	C_\eps N_1^{\eps}  \prod_{j=1}^6 \norm{f^{(j)}}_{l^2}
		\end{multline*}
		where the constant $C_\eps$ changes but still depends only on $\eps$. \\ 
		Finally, coming back to~\eqref{application psi/Omg estimate}, and with the estimate of \textbf{I} above, the estimate~\eqref{L6 deterministic estimate} is proven. \\
		
		\textbullet Now, we assume $N_{(4)} \ll N_{(3)}$, that is, here, $N_4 \ll N_3$, and we proceed to the proof of~\eqref{L6 refined deterministic estimate}. Once again we start from the inequality~\eqref{application psi/Omg estimate}. We still estimate \textbf{I} using~\eqref{determinstic estimate without Omg}. However, to estimate \textbf{II}, we use the lower bound on $\Omgk$ from Lemma~\ref{lem lower bound Omega when three high and three low freq} (note that we are indeed in a position to apply this lemma) to obtain: 
		\begin{multline*}
			\textbf{II} \leq C (N_1 N_3)^{-1} \textbf{I} \\ \leq C (N_1 N_3)^{-1} \left( N_{3} N_{4}N_{5} N_{6} \right)^{\frac{1}{2}} \prod_{j=1}^6 \norm{f^{(j)}}_{l^2} \leq C N_3^{-2} \left( N_{3} N_{4}N_{5} N_{6} \right)^{\frac{1}{2}} \prod_{j=1}^6 \norm{f^{(j)}}_{l^2}
		\end{multline*}
		where in the second inequality we have used \eqref{determinstic estimate without Omg} again. Hence, combining~\eqref{application psi/Omg estimate} with the inequality above and~\eqref{determinstic estimate without Omg}, we deduce that:
		\begin{multline*}
			\sum_{\linc}|\Psi^{(1)}_{2s}(\vec{k})| \prod_{j=1}^6 | f_{k_j}^{(j)} |  \\
			\leq C_s \big(  N_{1}^{2(s-1)}\left( N_{3} N_{4}N_{5} N_{6} \right)^{\frac{1}{2}} +  (N_{1}^{2(s-1)} \vee N_{3}^{2(s-1)})  \left( N_{3} N_{4}N_{5} N_{6} \right)^{\frac{1}{2}}\big) \prod_{j=1}^6 \norm{f^{(j)}}_{l^2} \\
			\leq C_s (N_{1}^{2(s-1)} \vee N_{3}^{2(s-1)})  \left( N_{3} N_{4}N_{5} N_{6} \right)^{\frac{1}{2}} \prod_{j=1}^6\norm{f^{(j)}}_{l^2}
		\end{multline*}
		This completes the proof of Lemma~\ref{lem deterministic estimate via Strichartz's trick}.
\end{proof}

	\section{Exponential integrability of the energy functionals}\label{section Exponential integrability}
	This section is central. It is dedicated to the proof of  Proposition~\ref{prop exp integrability Fs,N}, using the properties prepared in Sections~\ref{section lower bound omega and counting bounds} and~\ref{section Deterministic estimates}. We organize this section in three paragraphs. In the first one, we expand the energy functionals into dyadic blocks. We then concentrate on these blocks, providing a key estimate that we will prove implies the desired proposition. The key estimate is proven in the third paragraph; and in preparation for its proof, we review a strategy in the second paragraph.
 
\subsection{Dyadic estimate and proof of the exponential integrability} Recall that $\cM_{s,N},\cT_{s,N}$, and $\cN_{s,N}$ have been defined in Definition~\ref{def energy functionals}.

\begin{defn}[Dyadic block]\label{def dyadic block} Let $N \in \N$ and $\cF_{s,N} \in \{ \cM_{s,N},\cT_{s,N},\cN_{s,N}\}$, and consider the unique $(\Psi_{2s},G) \in \{\Psi^{(0)}_{2s},\Psi^{(1)}_{2s}\} \times \{id , F_N \}$ such that:
		\begin{equation*}
		\cF_{s,N}(u_1,...,u_6)= \sum_{\linc} \Psi_{2s}(\vec{k}) \widehat{Gu}_1(k_1)\cjg{\widehat{u}_2}(k_2)...\cjg{\widehat{u}_6}(k_6)
	\end{equation*}
	for all $u_j \in C^\infty(\T)$ satisfying $supp(\widehat{u_j})\subset [-N,N]$.
	Then, for every dyadic vectors $\vec{N}=(N_1,...,N_6)\in (2^\N)^6$, we define the dyadic block for $\cF_{s,N}$ as:
	\begin{equation}\label{Fs dyadic block}
	\cF_{s,\vec{N}}(u_1,...,u_6) := \sum_{\substack{\linc \\ \forall j, \hsp |k_j|\sim N_j}} \Psi_{2s}(\vec{k}) \widehat{Gu}_1(k_1)\cjg{\widehat{u}_2}(k_2)...\cjg{\widehat{u}_6}(k_6)
\end{equation}
Consequently, we have the dyadic decomposition:
	\begin{equation}\label{dyadic decomposition in the rem}
 	\cF_{s,N}= \sum_{\vec{N}} 	\cF_{s,\vec{N}}
 \end{equation}
 where the summation is performed over the dyadic vectors $\vec{N}=(N_1,...,N_6) \in (2^\N)^6$.
\end{defn}

\begin{notn}\label{notn fromula dyadic block}Let $N \in \N$ and $\cF_{s,N} \in \{\cM_{s,N},\cT_{s,N}, \cN_{s,N} \}$. For any function $W : \T \ra \C$, we use the notations:
	\begin{align*}
		\forall k \in \Z, \hsp W_k &:= \widehat{W}(k), & W^L &:= P_L \circ \pi_N (W)
	\end{align*}
   where $L$ is dyadic and $P_L$ is the corresponding dyadic frequency projector. Then, for any function $U:\T \ra \C$ with $supp(\widehat{U}) \subset [-N,N]$, we have:
	\begin{equation*}
		\cF_{s,\vec{N}}(U) = \sum_{\linc} \Psi_{2s}(\vec{k}) G^{N_1}_{k_1} \cjg{U^{N_2}_{k_2}}U^{N_3}_{k_3}\cjg{U^{N_4}_{k_4}}U^{N_5}_{k_5}\cjg{U^{N_6}_{k_6}}
	\end{equation*}
	with the abuse of notation $G = G(U)$, that we use for commodity.
\end{notn}

\begin{lem}\label{lem exp integrability Rs}
	Let $s>\frac{9}{10}$, $\sigma= s - \frac{1}{2}-$, $N \in \N$ and $\vec{N}=(N_1,...,N_6) \in (2^\N)^6$. Let $\cF_{s,N} \in \{\cM_{s,N},\cT_{s,N}, \cN_{s,N} \}$, and invoke $\cF_{s,\vec{N}}$ the corresponding dyadic block. Then,  for $\theta \in (0,1)$ small enough, we have that for every $R>0$, there exists $C_{s,R}>0$, only depending on $s$ and $R$, such that for every $V \in H^s(\T)$~:
	\begin{multline}\label{estimate Fs dyadic block}
		\big|\cF_{s,\vec{N}}(\phi +V) \big| \1_{B_R}(\pi_N \phi + \pi_N V) \\ \leq C_{s,R} \big( N_{(1)}^{0-} + N_{(1)}^{0-}\norm{V}_{H^s}^{2-\theta}(1+\hsignorm{\phi})^{1+\theta} + X_{\vec{N}} \norm{V}_{H^s} + Y_{\vec{N}} \big)
	\end{multline} 
	where $X_{\vec{N}}$ and $Y_{\vec{N}}$ are two non-negative random variables that satisfy~:
    \begin{align}\label{rv dyadic estimate Rs}
        \E[X_{\vec{N}}^2] & \lesssim_s N_{(1)}^{0-}, & \E[Y_{\vec{N}}] & \lesssim_s N_{(1)}^{0-}
    \end{align}
\end{lem}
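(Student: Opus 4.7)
My plan is to expand $\cF_{s,\vec N}(\phi+V)$ multilinearly along $U=\phi+V$ and to bound each resulting sub-term by one of the four quantities on the right-hand side of \eqref{estimate Fs dyadic block}, organising the bounds by the number $v$ of $V$-factors each sub-term contains.

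\textbf{Multilinear expansion.} Substituting $U=\phi+V$ into each of the six outer slots of $\cF_{s,\vec N}$ — and, when $G=F_N$, further into each of the five arguments of $F_N(U)$ — produces a finite sum of sub-terms labelled by tuples in $\{\phi,V\}$. Let $v$ be the total number of $V$-labels in a fixed sub-term.

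\textbf{Case $v=0$ (purely Gaussian term).} The sub-term is a multilinear expression
\[
\sum\Psi_{2s}(\vec k)\,\mathbf{1}_{|k_j|\sim N_j}\,\prod_{j}\frac{g_{k_j}^{\iota_j}}{\langle k_j\rangle^{s}},
\]
which I split into the non-paired contribution and finitely many paired ones. On the non-paired region, Lemma~\ref{lem L2 estimate with orthogonality} gives an $L^{2}(\Omega)$-bound by the square root of $\sum|\Psi_{2s}(\vec k)|^{2}\prod\langle k_j\rangle^{-2s}$; inserting the estimate of Corollary~\ref{cor psi estimate and counting bound} together with the $\prod N_j^{-2s}$ weights produces an $N_{(1)}^{0-}$ decay as soon as $s>\tfrac{9}{10}$. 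Paired contributions are handled by substituting $k_i=k_j$, which lowers the effective dimension of the sum, and — in the $G=F_N$ case — by invoking Lemma~\ref{lem localized L2 estimate on |u|4u} on the $F_N$-slot. I define $Y_{\vec N}$ to be the sum of the absolute values of all these pieces, so that $\E[Y_{\vec N}]\lesssim_s N_{(1)}^{0-}$.

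\textbf{Case $v=1$.} Fix the slot $j_0$ containing $V$ and rewrite the sub-term as an inner product $\langle V^{N_{j_0}},Z^{(j_0)}\rangle_{l^{2}(|k_{j_0}|\sim N_{j_0})}$, where $Z^{(j_0)}$ is a random sequence built from the remaining five $\phi$-factors and the weight $\Psi_{2s}$. Cauchy-Schwarz together with $\|V^{N_{j_0}}\|_{l^{2}}\leq N_{j_0}^{-s}\|V\|_{H^{s}}$ yields
\[
|\text{sub-term}|\leq N_{j_0}^{-s}\,\|Z^{(j_0)}\|_{l^{2}}\cdot\|V\|_{H^{s}}.
\]
The $L^{2}(\Omega)$-estimate on $\|Z^{(j_0)}\|_{l^{2}}$ is performed exactly as in the previous case (square-root cancellation plus Corollary~\ref{cor psi estimate and counting bound}), and the sum over $j_0$ and over sub-terms of this type is absorbed into a single random variable $X_{\vec N}$ satisfying $\E[X_{\vec N}^{2}]\lesssim_s N_{(1)}^{0-}$.

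\textbf{Case $v\geq 2$.} Apply Lemma~\ref{lem deterministic estimate via Strichartz's trick}, and the refined Lemma~\ref{lem refined deterministic estimate via Strichartz's trick} whenever $N_{(4)}\ll N_{(3)}$, to bound the sub-term by a product $\prod_j\|f^{(j)}\|_{l^{2}}$ with explicit dyadic prefactor. For each $V$-slot I interpolate
\[
\|P_{N_j}\pi_N V\|_{l^{2}}\leq\bigl(N_j^{-s}\|V\|_{H^{s}}\bigr)^{1-\alpha_j}\bigl(N_j^{-\sigma}\|\pi_N V\|_{H^{\sigma}}\bigr)^{\alpha_j},
\]
with the $\alpha_j\in[0,1]$ chosen so that the cumulative power of $\|V\|_{H^{s}}$ equals exactly $2-\theta$. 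The indicator $\mathbf 1_{B_R}(\pi_N\phi+\pi_N V)$ gives $\|\pi_N V\|_{H^{\sigma}}\leq R+\|\phi\|_{H^{\sigma}}$, so the surplus $\|\pi_N V\|_{H^{\sigma}}$-factors are controlled by $(1+\|\phi\|_{H^{\sigma}})^{\text{small}}$. The remaining $\phi$-slot $l^{2}$-norms are dominated partly pointwise (contributing a single $\|\phi\|_{H^{\sigma}}$-factor in the surviving $(1+\|\phi\|_{H^{\sigma}})^{1+\theta}$) and partly probabilistically, the latter being re-absorbed into $Y_{\vec N}$ (when they carry no $V$) or $X_{\vec N}$ (when they carry one $V$) defined in the previous two steps.

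\textbf{Main obstacle.} The delicate point is the bookkeeping in Step~4: after the $\Psi_{2s}$-estimates of Lemma~\ref{lemma psi estimate}, the resonance lower bound of Lemma~\ref{lem lower bound Omega when three high and three low freq}, the deterministic multilinear estimates of Lemmas~\ref{lem deterministic estimate via Strichartz's trick}–\ref{lem refined deterministic estimate via Strichartz's trick}, and the interpolation on the $V$- and $\phi$-slots, one must check that the net $N_{(1)}$-exponent is strictly negative while only one pointwise $\|\phi\|_{H^{\sigma}}$-factor survives. This requires a case split on the dyadic configuration (in particular the position of the $V$-slots among the highest frequencies and whether $N_{(4)}\ll N_{(3)}$ or not), and, when $G=F_N$, the extra use of Lemma~\ref{lem localized L2 estimate on |u|4u} on the inner non-linearity $F_N(\phi+V)$; this is also where the threshold $s>\tfrac{9}{10}$ is ultimately consumed.
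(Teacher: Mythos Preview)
Your outline diverges from the paper's proof in one structural point that makes it fail to establish the lemma as stated. The paper does \emph{not} expand all slots: after splitting into the regimes $N_{(4)}\ll N_{(3)}$ (handled purely deterministically by Lemma~\ref{lem refined deterministic estimate via Strichartz's trick} and Lemma~\ref{lem localized L2 estimate on |u|4u}, producing the bare $N_{(1)}^{0-}$ term) and $N_{(4)}\sim N_{(3)}$, it expands $U=\phi+V$ \emph{only at the three slots $k_2,k_3,k_4$}, while $G^{N_1}=G(U)^{N_1}$ and $U^{N_5},U^{N_6}$ are kept intact and controlled through the cut-off $\1_{B_R}(U)$ --- the $G$-slot via Lemma~\ref{lem localized L2 estimate on |u|4u}, the low slots via $\|U^{N_j}\|_{L^2}\lesssim_R N_j^{1/2-s+}$. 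This is why, in the two-$V$ piece $\frkF^{(D2)}$, only a single $\|\phi\|_{H^\sigma}$ factor survives (from the one $\phi$ among positions $2,3,4$), giving the exact exponent $1+\theta$ in \eqref{estimate Fs dyadic block}. Your full expansion in all six (or ten) slots leaves, in the $v=2$ case, four or more $\phi$-factors whose pointwise bound is $\|\phi\|_{H^\sigma}^M$; these cannot be ``re-absorbed into $X_{\vec N}$ or $Y_{\vec N}$'' because neither random variable multiplies $\|V\|_{H^s}^{2-\theta}$ on the right-hand side. You would obtain $(1+\|\phi\|_{H^\sigma})^{M}$ with $M$ large, which is not the bound \eqref{estimate Fs dyadic block} (even though such a weaker bound would still suffice for Proposition~\ref{prop exp integrability Fs,N} via Fernique).

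There is a second, related inconsistency specific to $\cN_{s,N}$. You announce expanding inside $F_N(U)$, but the inner convolution $\sum_{p_1-\dots+p_5=k_1}$ carries no constraint on $\Omega$, so neither the counting bounds behind Corollary~\ref{cor psi estimate and counting bound} nor the square-root cancellation of Lemma~\ref{lem L2 estimate with orthogonality} over ten indices deliver the required $N_{(1)}^{0-}$ without substantial additional work (and many extra pairing cases) that you do not describe; your ``Main obstacle'' paragraph then tacitly reverts to applying Lemma~\ref{lem localized L2 estimate on |u|4u} to the unexpanded $F_N(\phi+V)$, which contradicts the setup. The paper avoids this entirely by never expanding inside $G$: the $k_1$-slot is always treated deterministically through Lemma~\ref{lem localized L2 estimate on |u|4u}, and it is precisely the exponent $n_s$ from that lemma, combined with the partial expansion at $k_2,k_3,k_4$, that produces the threshold $s>\tfrac{9}{10}$.
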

For more clarity, we postpone the proof of Lemma~\ref{lem exp integrability Rs} for the end of this section. Now, we show how this lemma, combined with the simplified Boué-Dupuis formula of Lemma~\ref{lem boue-dupuis}, imply Proposition~\ref{prop exp integrability Fs,N}. 
\begin{proof}[Proof of Proposition~\ref{prop exp integrability Fs,N} assuming Lemma~\ref{lem exp integrability Rs}] Let $\cF_{s,N} \in \{ \cM_{s,N}, \cT_{s,N}, \cN_{s,N}\}$.
	Our starting point is the simplified Boué-Dupuis formula from Lemma~\ref{lem boue-dupuis}, applied to the function $\cF = \alpha \cF_{s,N} \1_{B_R}$~:
	\begin{multline}\label{boue dupuis Rs}
		\log \int \1_{B_R}(u) e^{\alpha | \cF_{s,N}( u)|}d\mu_s \leq \log \int  e^{\alpha | \cF_{s,N}(\pi_N u)|\1_{B_R}(\pi_N u)}d\mu_s \\
		\leq \E \big[\sup_{V \in H^s} \{ \alpha | \cF_{s,N}( \phi^\omega +  V)|\1_{B_R}(\pi_N \phi^\omega + \pi_N V) - \frac{1}{2}\norm{V}^2_{H^s} \} \big]
	\end{multline}
	As in Definition~\ref{def dyadic block}, we decompose $\cF_{s,N}$ dyadically.
	Thus, from the above inequality we obtain:
	\begin{multline*}
		\log \int \1_{B_R}(u) e^{\alpha | \cF_{s,N}( u)|}d\mu_s
		\\ \leq \E \big[\sup_{V \in H^s} \{ \sum_{\vec{N}} \alpha | \cF_{s,\vec{N}}( \phi^\omega + V)|\1_{B_R}(\pi_N \phi^\omega + \pi_N V) - \frac{1}{2}\norm{V}^2_{H^s} \} \big]
	\end{multline*}
Now, applying Lemma~\ref{lem exp integrability Rs}, introducing the parameter $\theta \in (0,1)$ close 0 and the random variables $X_{\vec{N}}$ and $Y_{\vec{N}}$ satisfying~\eqref{rv dyadic estimate Rs}, yields:
\begin{multline*}
     \log \int \1_{B_R}(u) e^{\alpha | \cF_{s,N}( u)|}d\mu_s
		 \\
    \lesssim_{s,R} \E \big[\sup_{V \in H^s} \{ \alpha +\alpha \|V\|_{H^s}^{2-\theta}(1+\hsignorm{\phi})^{1+\theta} + \alpha \sum_{\vec{N}}X_{\vec{N}} \| V\|_{H^s}  + \alpha \sum_{\vec{N}}Y_{\vec{N}}  - \frac{1}{2}\norm{V}^2_{H^s} \} \big] \\
    \lesssim \alpha +\E \big[\sup_{x >0} \alpha x^{2-\theta}(1+\hsignorm{\phi})^{1+\theta} - \frac{1}{4}x^2\big] + \E \big[\sup_{x >0} \big(\alpha \sum_{\vec{N}}X_{\vec{N}}\big) x - \frac{1}{4}x^2\big]  + \alpha \E \big[ \sum_{\vec{N}}Y_{\vec{N}}  \big]
\end{multline*}
Let us now recall the following elementary facts:
\begin{align*}
	\sup_{x>0} \beta x^{2-\theta} - \frac{1}{4}x^2 & \lesssim_\theta \beta^{\frac{2}{\theta}} & &\textnormal{and} & \sup_{x>0} \beta x - \frac{1}{4}x^2 \lesssim \beta^2, \hspace{1cm} \textnormal{for any $\beta \geq 0$.}
\end{align*}
Then, continuing the estimates above:
\begin{equation*}
	 \log \int \1_{B_R}(u) e^{\alpha | \cF_{s,N}( u)|}d\mu_s \lesssim_{s,R} \alpha+ \alpha^{\frac{2}{\theta}}\E\big[(1+\hsignorm{\phi})^{\frac{2(1+\theta)}{\theta}} \big] + \alpha^2\E\big[\big( \sum_{\vec{N}} X_{\vec{N}}\big)^2 \big] + \alpha \sum_{\vec{N}} \E[Y_{\vec{N}}]
\end{equation*}
To conclude, we firstly write that $\E[(1+\hsignorm{\phi})^{m}] \leq C_m <+\infty$ for every $m>0$, from Fernique's integrability theorem (see for example \cite{bogachev1998gaussian} or \cite{kuo2006gaussian}); and secondly from~\eqref{rv dyadic estimate Rs}:
\begin{align*}
	\E\big[\big( \sum_{\vec{N}} X_{\vec{N}}\big)^2 \big] &= \sum_{\vec{N},\vec{M}} \E[X_{\vec{N}}X_{\vec{M}}] \leq \sum_{\vec{N},\vec{M}} \E[X_{\vec{N}}^2]^{\frac{1}{2}}\E[X_{\vec{M}}^2]^{\frac{1}{2}} <+\infty, & \sum_{\vec{N}} \E[Y_{\vec{N}}] & < +\infty
\end{align*}
Therefore, we indeed have in the end:
\begin{equation*}
	\log \int \1_{B_R}(u) e^{\alpha | \cF_{s,N}( u)|}d\mu_s \lesssim_{s,R} \alpha^{\frac{2}{\theta}}
\end{equation*}
for all $\alpha\geq 1$.
\end{proof}

\subsection{Strategy and remarks for the proof of the dyadic estimate}\label{par strategy and remarks}
For commodity we use the notation:
\begin{equation*}
	U := \pi_N \phi + \pi_N V
\end{equation*}
Let $\cF_{s,N} \in \{\cM_{s,N},\cT_{s,N},\cN_{s,N}\}$ and consider the associated dyadic block $\cF_{s,\vec{N}}$. As in Notation~\ref{notn fromula dyadic block}, we consider $(\Psi_{2s},G) \in \{\Psi^{(0)}_{2s} ,\Psi^{(1)}_{2s}\} \times \{id, F_N\}$ such that: 
\begin{equation}\label{dyadic formula for FsN}
		\cF_{s,\vec{N}}(U)
		=  \sum_{\linc} \Psi_{2s}(\vec{k}) G^{N_1}_{k_1} \cjg{U^{N_2}_{k_2}}U^{N_3}_{k_3}\cjg{U^{N_4}_{k_4}}U^{N_5}_{k_5}\cjg{U^{N_6}_{k_6}} 
\end{equation}
still with the abuse of notation $G = G(U)$. The strategy to estimate $\cF_{s,\vec{N}}(U)$ is to expand the formula above in $U=\pi_N \phi + \pi_N V$. When we replace $U \in \hsigt$ by $V \in H^s(\T)$, we gain $\frac{1}{2}$-derivative; when we replace $U$ by $\phi$, we gain randomness. However, we are limited to expanding the formula~\eqref{dyadic formula for FsN} until we have at most two $V$ terms, in order to ultimately remain controlled by the counter-part-term $-\frac{1}{2} \norm{V}_{H^s}^2$ in the Boué-Dupuis formula. More concretely, this limitation allow us to obtain a power less than 2 on $\norm{V}_{H^s}$ in~\eqref{estimate Fs dyadic block}; and this point was indeed essential in the proof of Proposition~\ref{prop exp integrability Fs,N} assuming Lemma~\ref{lem exp integrability Rs}. \\

The gains obtained expanding~\eqref{dyadic formula for FsN} are measured in terms of negative powers of the $N_j$. When we replace $U^{N_j}$ by $V^{N_j}$, we easily see the benefit from:
\begin{align*}
	\norm{U^{N_j}}_{L^2} &\lesssim N_j^{\frac{1}{2}-s+} \hsignorm{U}, & \norm{V^{N_j}}& \lesssim N_j^{-s} \norm{V}_{H^s}
\end{align*}
When we deal with the $\phi^{N_j}$, we benefit from the independence of the Gaussian random variables $(g_n)_{n \in \Z}$ through Lemma~\ref{lem L2 estimate with orthogonality}, and from the counting bounds from Lemma~\ref{lem counting bound 3 integers}, relying on the presence of constraints on the resonant function $\Omega$ (see the proof of Corollary~\ref{cor psi estimate and counting bound}). Note also that the higher the $N_j$ are, the better our estimates will be.\\

In our analysis, the counting bounds in Lemma~\ref{lem counting bound 3 integers} play a key role, and this will force us to not expand in $U$ inside the term $G^{N_1}$, where $G(U)=\pi_N (|U|^4U)$ (which corresponds to the most difficult case). More precisely, since the Fourier transform converts the product into convolution, we have:
\begin{equation*}
	(|U|^4U)_{k_1} =  \sum_{\lincba} U_{p_1} \cjg{U_{p_2}}...U_{p_5}
\end{equation*}
and wee see that this sum does not contain any constraint on the resonant function $\Omega$, which prevents us from applying the counting bounds from Lemma~\ref{lem counting bound 3 integers}. For that reason, we will not expand in $U$ inside $G^{N_1}$ and we will treat this term  purely deterministically, using Lemma~\ref{lem localized L2 estimate on |u|4u}. \\
We emphasize that similar strategies have been performed recently in~\cite{coe2024sharp} and~\cite{forlano2025improvedquasiinvarianceresultperiodic}.
\subsection{Proof of the dyadic estimate}

\begin{proof}[Proof of Lemma~\ref{lem exp integrability Rs}]   Let $s>\frac{9}{10}$, $N \in \N$, and $\vec{N}=(N_1,...,N_6) \in (2^\N)^6$. Recall that we use the notation:
	\begin{equation*}
		U := \pi_N \phi + \pi_N V
	\end{equation*}
	 Again, as in Definition~\ref{def dyadic block} and Notation~\ref{notn fromula dyadic block}, we consider $(\Psi_{2s},G) \in \{\Psi^{(0)}_{2s} ,\Psi^{(1)}_{2s}\} \times \{id, F_N\}$ such that: 
	\begin{equation}\label{FsN fromula in the proof}
			\cF_{s,\vec{N}}(U) = \sum_{\linc} \Psi_{2s}(\vec{k}) G^{N_1}_{k_1} \cjg{U^{N_2}_{k_2}}U^{N_3}_{k_3}\cjg{U^{N_4}_{k_4}}U^{N_5}_{k_5}\cjg{U^{N_6}_{k_6}} 
	\end{equation}
	still with the abuse of notation $G = G(U)$.\\
	
	We consider the worst case when $N_1 = N_{(1)}$. Then, without loss of generality, we assume that:
	\begin{align*}
		N_{(2)} &= N_2 & &\textnormal{and} & N_{(3)} &= N_3 & &\textnormal{and} & 	N_{(4)} &= N_4 
	\end{align*}
	The other cases are similar (when the $N_{(2)},N_{(3)},N_{(4)}$ does not have the same signature) or slightly simpler (when the $N_{(2)},N_{(3)},N_{(4)}$ have the same signature). Moreover, we assume $N_1 \sim N_2$. Indeed, the other case, $N_1 \ll N_2$, corresponds to a zero contribution in~\eqref{FsN fromula in the proof} due to the constraint $\linc$. Thus, we will consistently replace $N_2$ with $N_1$ in our estimates.\\
	
	We distinguish between two frequency regimes: when $N_{4} \sim N_{3}$ and when $N_{4} \ll N_{3}$. Hence, we write:
	\begin{equation*}
		\cF_{s,\vec{N}}(U) = \1_{N_4 \ll N_3} \cF_{s,\vec{N}}(U) + \1_{N_4 \sim N_3}\cF_{s,\vec{N}}(U)
	\end{equation*}
	and we treat separately both terms. \\
	
	\textbullet \textbf{The first regime:} When $N_4 \ll N_3$. In this case, we estimate~\eqref{FsN fromula in the proof}
	purely deterministically. Here, we are in position to apply the refined deterministic estimate~\eqref{L6 refined deterministic estimate} from Lemma~\ref{lem refined deterministic estimate via Strichartz's trick}. Doing so leads to:
	\begin{equation*}
		|\cF_{s,\vec{N}}(U)| \lesssim_s (N_{1}^{2(s-1)} \vee N_{3}^{2(s-1)}) (N_{3}N_{4}N_{5}N_{6})^{\frac{1}{2}} \norm{G^{N_1}}_{L^2(\T)}\prod_{j=2}^6\norm{U^{N_j}}_{L^2(\T)}
	\end{equation*}
	Combining this with~\eqref{localized L2 estimate on |u|4u} yields:
	\begin{equation*}
		\begin{split}
			|\cF_{s,\vec{N}}(U)| \1_{B_R}(U) &\lesssim_{s,R} (N_{1}^{2(s-1)} \vee N_{3}^{2(s-1)}) N_1^{n_s+\frac{1}{2}-s +} (N_{3}N_{4}N_{5}N_{6})^{1-s} \\ &\lesssim
			\begin{cases}
				N_1^{2(s-1)+2(\frac{1}{2}-s)+} = N_1^{-1+} \hspace{0.3cm} \textnormal{for $s>1$} \\
				N_1^{5-6s}N_3^{2(s-1) + 4(1-s)} \hspace{0.3cm} \textnormal{for $s\leq 1$}
			\end{cases}
		\end{split}
	\end{equation*}
	We deduce that:
	\begin{equation*}
		|\cF_{s,\vec{N}}(U)| \1_{B_R}(U) \lesssim_{s,R} N_1^{0-}
	\end{equation*}
	Indeed, this is true for $s>1$;  when $s \leq 1$, we have $5-6s>0$ (since $s>\frac{9}{10}$), so that we can write:
	\begin{center}
		$N_1^{5-6s}N_3^{2(s-1) + 4(1-s)} \leq N_1^{0-} \iff 5 -6s + 2(s-1) + 4(1-s)<0 \iff s>\frac{7}{8}$
	\end{center}
	and we have $\frac{7}{8}<\frac{9}{10}$. In particular, we have obtained, when $N_4 \ll N_3$, that $\cF_{s,\vec{N}}(U)$ satisfies the bound in~\eqref{estimate Fs dyadic block}.\\
	
	\textbullet \textbf{The second regime:} When $N_4 \sim N_3$. In this situation, treating $\cF_{s,\vec{N}}$ purely deterministically is not enough. Following the strategy of Paragraph~\ref{par strategy and remarks}, we expand in $U$ the formula~\eqref{FsN fromula in the proof}. But before doing so, we decompose $\cF_{s,\vec{N}}(U)$ into a non-pairing contribution and a pairing contribution as follows:
	\begin{equation}\label{Fs = non pairing + pairing}
		\begin{split}
			\cF_{s,\vec{N}}(U) &= \sum_{\substack{\linc \\ k_3 \neq k_2,k_4}} \Psi_{2s}(\vec{k}) G^{N_1}_{k_1}\cjg{U^{N_2}_{k_2}}...\cjg{U^{N_6}_{k_6}} + \sum_{\substack{\linc \\ k_3 \in \{k_2,k_4 \} }} \Psi_{2s}(\vec{k}) G^{N_1}_{k_1}\cjg{U^{N_2}_{k_2}}...\cjg{U^{N_6}_{k_6}}
			\\& =: \frkF_{s,\vec{N}}(G,U,...,U) + \cF_{s,\vec{N}}^{\downarrow}(G,U,...,U)
		\end{split}
	\end{equation}
	where $\frkF_{s,\vec{N}}$ and $\cF^\downarrow_{s,\vec{N}}$ are the multi-linear maps induced, respectively, by the left sum and the right sum in~\eqref{Fs = non pairing + pairing}. For the term $\frkF_{s,\vec{N}}$, we will exploit the independence of the Gaussian $g_{k_3}$ from $g_{k_2}$ and $g_{k_4}$; and the term $\cF^\downarrow_{s,\vec{N}}$ will simplify as a degenerate version of $\frkF_{s,\vec{N}}$.\\
	
	\textbf{The non-pairing contribution :} This contribution corresponds to the term $\frkF_{s,\vec{N}}$ in~\eqref{Fs = non pairing + pairing}. Here, we establish the bound in~\eqref{estimate Fs dyadic block} for $\frkF_{s,\vec{N}}(G,U,...,U)$. To do so, we start by decomposing $\frkF_{s,\vec{N}}$ thanks to the multi-linearity. More precisely, we expand $\frkF_{s,\vec{N}}$ at the level of the frequencies $N_2,N_3$ and $N_4$ as follows:
	\begin{equation}\label{expanding 3 hf FsN}
		\begin{split}
			\frkF_{s,\vec{N}}&(G,U,U,U,U,U)\\  &=\frkF_{s,\vec{N}}(G,V,V,U,U,U) + \frkF_{s,\vec{N}}(G,V,\phi,V,U,U) + \frkF_{s,\vec{N}}(G,\phi,V,V,U,U) \\
			&+ \frkF_{s,\vec{N}}(G,V,\phi,\phi,U,U) + \frkF_{s,\vec{N}}(G,\phi,V,\phi,U,U) + \frkF_{s,\vec{N}}(G,\phi,\phi,V,U,U) \\ 
			& + \frkF_{s,\vec{N}}(G,\phi,\phi,\phi,U,U) \\ 
			&=: \frkF_{s,\vec{N}}^{(D2)} + \frkF_{s,\vec{N}}^{(D1)} +\frkF_{s,\vec{N}}^\omega 
		\end{split}
	\end{equation}
	where $\frkF_{s,\vec{N}}^{(D2)}$,  $\frkF_{s,\vec{N}}^{(D1)}$ and $\frkF_{s,\vec{N}}^\omega$ correspond respectively to the first, second and third lines of the right term of the equality in~\eqref{expanding 3 hf FsN}. Note that in $\frkF_{s,\vec{N}}^{(D2)}$ we have grouped the terms with two $V$; in $\frkF_{s,\vec{N}}^{(D1)}$ we have grouped the terms with exactly one $V$; in $\frkF_{s,\vec{N}}^\omega$ we have isolated the term without any $V$. In what follows, we study separately these three terms.\\
	
	We recall that we now have $N_3 \sim N_4$, and still $N_1 \sim N_2$. Hence, in the following, we will consistently replace in $N_2$ by $N_1$  and $N_4$ by $N_3$ in our estimates.\\
	
	\textbf{Study of $\frkF_{s,\vec{N}}^{(D2)}$:} Here, we benefit from the presence of two $V \in H^s(\T)$. In $\frkF_{s,\vec{N}}^{(D2)}$ (see~\eqref{expanding 3 hf FsN}), the least favorable term is:
	\begin{equation*}
		\frkF_{s,\vec{N}}(G,\phi,V,V,U,U) =  \sum_{\substack{\linc \\ k_3 \neq k_2,k_4}}\Psi_{2s}(\vec{k}) G^{N_1}_{k_1}\cjg{\phi^{N_2}_{k_2}}V^{N_3}_{k_3}\cjg{V^{N_4}_{k_4}}U^{N_5}_{k_5}\cjg{U^{N_6}_{k_6}}
	\end{equation*}  because the two $V$'s are located at the frequencies $N_3$ and $N_4$ (and not at the higher frequency $N_2\sim N_1$). Let us then provide a detailed analysis for this term only; the analysis for the other two is similar.\\
	Applying~\eqref{L6 deterministic estimate} from Lemma~\ref{lem deterministic estimate via Strichartz's trick} leads to:
	\begin{multline*}
		|\frkF_{s,\vec{N}}(\phi,V,V,U,U,U)| \\ \lesssim  N_{1}^{0+} (N_{1}^{2(s-1)} \vee N_{3}^{2(s-1)}) N_{3}^{2} \norm{G^{N_1}}_{L^2}\norm{\phi^{N_2}}_{L^2} \norm{V^{N_3}}_{L^2} \norm{V^{N_4}}_{L^2} \norm{U^{N_5}}_{L^2} \norm{U^{N_6}}_{L^2} 
	\end{multline*}
	We interpolate $V^{N_4}$ between $\hsig$ and $H^s$ as: 
	\begin{center}
		$\norm{V^{N_4}}_{H^{\theta \sigma +(1-\theta)s}} \leq \hsignorm{V^{N_4}}^\theta \norm{V^{N_4}}_{H^s}^{1-\theta}$ for $\theta \in (0,1)$, 
	\end{center}
	with $\theta \in (0,1)$ close to 0 so that we may write $\theta \sigma +(1-\theta)s = s-$. By using additionally~\eqref{localized L2 estimate on |u|4u} (recalling that $n_s$ is defined in~\eqref{ns}), we deduce that:
	\begin{multline*}
		|\frkF_{s,\vec{N}}(\phi,V,V,U,U,U)|\1_{B_R}(U) \\
		\lesssim_{s,R}  N_{1}^{n_s+ \frac{1}{2}-s+} (N_{1}^{2(s-1)} \vee N_{3}^{2(s-1)}) N_{3}^{2-2s}(N_5N_6)^{\frac{1}{2}-s} \norm{\phi}_{\hsig}  \norm{\pi_N V}_{\hsig}^{\theta}\norm{ V}_{H^s}^{2-\theta}   
	\end{multline*}
	Moreover, since $s>\frac{9}{10}>\frac{1}{2}$, we can crudely use the bound $(N_5N_6)^{\frac{1}{2}-s}\leq 1$. Using the triangular inequality, we also bound $\norm{\pi_N V}_{\hsig}$ by $\hsignorm{U}+ \hsignorm{\phi}$ (recalling that $U=\pi_N \phi + \pi_N V$). Then, from the above inequality we obtain:
	\begin{equation*}
		|\frkF_{s,\vec{N}}(\phi,V,V,U,U,U)|\1_{B_R}(U) 
		\lesssim_{s,R}  N_{1}^{n_s+ \frac{1}{2}-s+} (N_{1}^{2(s-1)} \vee N_{3}^{2(s-1)}) N_{3}^{2-2s} (1+\hsignorm{\phi})^{1+\theta}\norm{V}_{H^s}^{2-\theta}
	\end{equation*}
	Finally, to obtain the bound in~\eqref{estimate Fs dyadic block}, we observe that:
	\begin{equation*}
		 N_{1}^{n_s+ \frac{1}{2}-s+} (N_{1}^{2(s-1)} \vee N_{3}^{2(s-1)}) N_{3}^{2-2s} \leq \begin{cases}
			N_1^{-1+} N_3^{2-2s} \hspace{0.2cm} \textnormal{for $s>1$} \\
			N_1^{5-6s+} \hspace{0.2cm} \textnormal{for $s\leq 1$}
		\end{cases}
		\leq \begin{cases}
			N_1^{-1+} \hspace{0.2cm} \textnormal{for $s>1$} \\
			N_1^{0-} \hspace{0.2cm} \textnormal{for $\frac{5}{6}<s\leq 1$}
		\end{cases}
	\end{equation*}
	The analysis of $\frkF_{s,\vec{N}}(G,V,V,U,U,U)$ and $\frkF_{s,\vec{N}}(G,V,\phi,V,U,U)$ is identical, and in the end we have:
	\begin{equation*}
		|\frkF_{s,\vec{N}}^{(D2)}|\1_{B_R}(U) \lesssim_{s,R} N_{1}^{0-} (1+\hsignorm{\phi})^{1+\theta}\norm{V}_{H^s}^{2-\theta}
	\end{equation*}
	In particular, we have indeed obtained the bound~\eqref{estimate Fs dyadic block} for $\frkF_{s,\vec{N}}^{(D2)}$. \\ 
	
	\textbf{Study of $\frkF_{s,\vec{N}}^{(D1)}$:} Recall that $\frkF_{s,\vec{N}}^{(D1)}$ is defined as the second line of the right term in~\eqref{expanding 3 hf FsN}. Here, we benefit from the presence of one $V$, and from the randomness through the square-root cancellation (see Lemma~\ref{lem L2 estimate with orthogonality}). Our analysis of the three terms composing $\frkF_{s,\vec{N}}^{(D1)}$ is identical, so we only focus, for example, on:
	\begin{equation*}
		\frkF_{s,\vec{N}}(G,\phi,\phi,V,U,U) =  \sum_{\substack{\linc \\ k_3 \neq k_2,k_4}} \Psi_{2s}(\vec{k}) G^{N_1}_{k_1}\cjg{\phi^{N_2}_{k_2}}\phi^{N_3}_{k_3}\cjg{V^{N_4}_{k_4}}U^{N_5}_{k_5}\cjg{U^{N_6}_{k_6}}
	\end{equation*} 
	We rewrite this as:
	\begin{equation*}
		\frkF_{s,\vec{N}}(\phi,\phi,V,U,U,U) =  \sum_{k_1,...,k_6 } C_0(\vec{k})\Psi_{2s}(\vec{k}) G^{N_1}_{k_1}\cjg{\phi^{N_2}_{k_2}}\phi^{N_3}_{k_3}\cjg{V^{N_4}_{k_4}}U^{N_5}_{k_5}\cjg{U^{N_6}_{k_6}}
	\end{equation*} 
	gathering all the constraints in:
	\begin{equation}\label{C(k) constraints for Fs}
		C_0(\vec{k}) := \1_{\linc} \cdot \1_{k_3 \neq k_2,k_4} \cdot \big( \prod_{j=1}^6 \1_{ |k_j| \sim N_j} 
		\1_{ |k_j| \leq N}	\big)
	\end{equation}
	Next, by the Cauchy-Schwarz inequality in the $k_1,k_4,k_5,k_6$ summation, we have:
	\begin{multline}\label{CS for RD1}
		|\frkF_{s,\vec{N}}(G,\phi,\phi,V,U,U)| \\ \leq \Big( \sum_{k_1,k_4,k_5,k_6} |G^{N_1}_{k_1}V^{N_4}_{k_4}U^{N_5}_{k_5}U^{N_6}_{k_6}|^2 \Big)^{\frac{1}{2}} \Big( \sum_{k_1,k_4,k_5,k_6} \big| \sum_{k_2,k_3} C_0(\vec{k}) \Psi_{2s}(\vec{k}) \phi^{N_2}_{k_2}\cjg{\phi^{N_3}_{k_3}} \big|^2 \Big)^{\frac{1}{2}} 
	\end{multline}
	Furthermore (recalling that $N_4 \sim N_3$), 
	\begin{multline*}
		\Big( \sum_{k_1,k_4,k_5,k_6} |G^{N_1}_{k_1}V^{N_4}_{k_4}U^{N_5}_{k_5}U^{N_6}_{k_6}|^2 \Big)^{\frac{1}{2}} \1_{B_R}(U) \leq \norm{G^{N_1}}_{L^2} \norm{V^{N_4}}_{L^2}  \norm{U^{N_5}}_{L^2} \norm{U^{N_6}}_{L^2} \1_{B_R}(U) \\ \lesssim_{s,R} N_1^{n_s+} N_3^{-s} (N_{5}N_{6})^{\frac{1}{2}-s}   \norm{V}_{H^s}
	\end{multline*}
	So, from~\eqref{CS for RD1}, we obtain 
	\begin{equation*}
		|\frkF_{s,\vec{N}}(G,\phi,\phi,V,U,U)|\1_{B_R}(U) \lesssim_{s,R}  X_{\vec{N}} \norm{V}_{H^s}
	\end{equation*}
	with $X_{\vec{N}}$ the non-negative random variable defined as:
	\begin{equation*}
		X_{\vec{N}} :=  N_1^{n_s+} N_3^{-s} (N_{5}N_{6})^{\frac{1}{2}-s} \Big( \sum_{k_1,k_4,k_5,k_6} \big| \sum_{k_2,k_3} C_0(\vec{k})\Psi_{2s}(\vec{k}) \phi^{N_2}_{k_2}\cjg{\phi^{N_3}_{k_3}} \big|^2 \Big)^{\frac{1}{2}}
	\end{equation*}
	Our goal is to prove that $\frkF_{s,\vec{N}}(G,\phi,\phi,V,U,U)$ satisfies the bound in~\eqref{estimate Fs dyadic block}; then, in what follows, we prove that $X_{\vec{N}}$ satisfies the first bound in~\eqref{rv dyadic estimate Rs}. Thanks to the condition $k_2 \neq k_3$, we are in a position to use Lemma~\ref{lem L2 estimate with orthogonality}; therefore we can write:
	\begin{multline*}
		\E[X_{\vec{N}}^2] = [ N_1^{n_s+} N_3^{-s} (N_{5}N_{6})^{\frac{1}{2}-s}]^2 \sum_{k_1,k_4,k_5,k_6}  \E \big[ \big|\sum_{k_2,k_3} C_0(\vec{k})\Psi_{2s}(\vec{k}) \frac{g_{k_2}\cjg{g_{k_3}}}{(\langle k_2 \rangle \langle k_3 \rangle)^s} \big|^2 \big] \\ 
		\lesssim	[ N_1^{n_s+} N_3^{-s} (N_{5}N_{6})^{\frac{1}{2}-s}]^2 \sum_{k_1,k_4,k_5,k_6}   \sum_{k_2,k_3} C_0(\vec{k})\Psi_{2s}(\vec{k})^2 (N_2N_3)^{-2s} 
	\end{multline*}
	Now, using the fact that $N_{1} \sim N_{2}$, and the bound~\eqref{psi estimate + counting bound different signatures} from Corollary~\ref{cor psi estimate and counting bound}, we deduce that:
	\begin{equation}\label{E[X_N^2] to estimate}
		\E[X_{\vec{N}}^2] \lesssim [N_{1}^{n_s+s-1+}N_3^{\frac{3}{2}-2s} (N_{5}N_{6})^{1-s} ]^2
	\end{equation}
	therefore,
	\begin{equation*}
		\E[X_{\vec{N}}^2] \lesssim \begin{cases}
			[N_{1}^{-\frac{1}{2}+}N_3^{\frac{3}{2}-2s}]^2 \hspace{0.3cm} \textnormal{for $s>1$} \\
			[N_{1}^{\frac{7}{2}-4s+}N_3^{\frac{3}{2}-2s + 2(1-s)} ]^2 \hspace{0.3cm} \textnormal{for $s\leq1$}
		\end{cases}
	\end{equation*}
	Let us verify that this indeed implies $\E[X_{\vec{N}}^2]\lesssim N_1^{0-}$ when $s>\frac{9}{10}$. We easily see that this is the case when $s>1$. When $s \leq 1$, we write:
	\begin{equation*}
		N_{1}^{\frac{7}{2}-4s+}N_3^{\frac{3}{2}-2s + 2(1-s)} \leq N_1^{0-} \iff \begin{cases}
			\frac{7}{2}-4s<0 \\
			\frac{7}{2}-4s + \frac{3}{2}-2s + 2(1-s) <0
			\end{cases}
			\iff s>\frac{7}{8}
	\end{equation*}
	Hence, for every $s>\frac{9}{10}$ we have  $\E[X_{\vec{N}}^2]\lesssim N_1^{0-}$. Finally, the analysis for the other two terms  $\frkF_{s,\vec{N}}(G,V,\phi,\phi,U,U)$ and  $\frkF_{s,\vec{N}}(G,\phi,V,\phi,U,U)$ in $\frkF_{s,\vec{N}}^{(D1)}$ is identical, and in the end we have:
	\begin{equation*}
		|\frkF_{s,\vec{N}}^{(D1)}|\1_{B_R}(U) \lesssim_{s,R} \norm{V}_{H^s} X_{\vec{N}}
	\end{equation*}
	with $X_{\vec{N}}$ a non-negative random variable which satisfies the first bound in~\eqref{rv dyadic estimate Rs}. In particular, $\frkF_{s,\vec{N}}^{(D1)}$ satisfies the bound in~\eqref{estimate Fs dyadic block}. \\
	
	\textbf{Study of $\frkF^\omega_{s,\vec{N}}$:} This is the last term in~\eqref{expanding 3 hf FsN}; it is given by:
	\begin{equation*}
		\begin{split}
			\frkF^\omega_{s,\vec{N}} = \frkF_{s,\vec{N}}(G,\phi,\phi,\phi,U,U) &= \sum_{\substack{\linc \\k_3 \neq k_2,k_4}} \Psi_{2s}(\vec{k}) G_{k_1}^{N_1}\cjg{\phi^{N_2}_{k_2}}\phi^{N_3}_{k_3}\cjg{\phi^{N_4}_{k_4}}U^{N_5}_{k_5}\cjg{U^{N_6}_{k_6}} \\
			&=  \sum_{k_1,...,k_6}C_0(\vec{k}) \Psi_{2s}(\vec{k}) G_{k_1}^{N_1}\cjg{\phi^{N_2}_{k_2}}\phi^{N_3}_{k_3}\cjg{\phi^{N_4}_{k_4}}U^{N_5}_{k_5}\cjg{U^{N_6}_{k_6}}
		\end{split}
	\end{equation*}
	where $C_0(\vec{k})$ is the constraint term defined in~\eqref{C(k) constraints for Fs}. Here, we provide a similar analysis to that for $\frkF_{s,\vec{N}}^{(D1)}$. In particular, using the Cauchy-Schwarz inequality in the $k_1,k_5,k_6$ summation, and then the cut-off $\1_{B_R}(U)$ on $\hsigt$ (with $\sigma=s-\frac{1}{2}-$), leads to:
	\begin{equation*}
		\begin{split}
			|\frkF^\omega_{s,\vec{N}}|\1_{B_R}(U) &\leq \Big( \sum_{k_1,k_5,k_6} |G^{N_1}_{k_1}U^{N_5}_{k_5}U^{N_6}_{k_6}|^2 \Big)^{\frac{1}{2}} \Big( \sum_{k_1,k_5,k_6} \big| \sum_{k_2,k_3,k_4} C_0(\vec{k})\Psi_{2s}(\vec{k})  \phi^{N_2}_{k_2}\cjg{\phi^{N_3}_{k_3}}\phi^{N_4}_{k_4} \big|^2 \Big)^{\frac{1}{2}} \\
			&\lesssim_{s,R} Y_{\vec{N}}
		\end{split} 
	\end{equation*} 
	where $Y_{\vec{N}}$ is the non-negative random variable defined as:
	\begin{equation*}
		Y_{\vec{N}} := N_1^{n_s+}(N_{5}N_{6})^{\frac{1}{2}-s}\Big( \sum_{k_1,k_5,k_6} \big| \sum_{k_2,k_3,k_4} C_0(\vec{k})\Psi_{2s}(\vec{k}) \frac{g_{k_1}\cjg{g_{k_2}} g_{k_3}}{\langle k_1\rangle^s \langle k_2\rangle^s \langle k_3\rangle^s} \big|^2 \Big)^{\frac{1}{2}}
	\end{equation*}
	Now, our goal is to prove that $Y_{\vec{N}}$ satisfies the second bound in~\eqref{rv dyadic estimate Rs}. Note that the constraint term $C_0(\vec{k})$ contains the condition $k_3 \neq k_2,k_4$, so we are in a position to use both Lemma~\ref{lem L2 estimate with orthogonality} (with $m=3$) and the bound~\eqref{psi estimate + counting bound different signatures} from Corollary~\ref{cor psi estimate and counting bound}. Starting with the Cauchy-Schwarz inequality, we can therefore write:
	\begin{multline*}
			\E[Y_{\vec{N}}] \leq N_1^{n_s+}(N_{5}N_{6})^{\frac{1}{2}-s} \Big( \sum_{k_1,k_5,k_6} \E\big[ \big| \sum_{k_2,k_3,k_4} C_0(\vec{k})\Psi_{2s}(\vec{k}) \frac{g_{k_2}\cjg{g_{k_3}} g_{k_4}}{\langle k_2\rangle^s \langle k_3\rangle^s \langle k_4\rangle^s} \big|^2\big] \Big)^{\frac{1}{2}} \\
			 \lesssim N_1^{n_s+}(N_{5}N_{6})^{\frac{1}{2}-s} N_1^{-s}N_3^{-2s} \big(\sum_{k_1,...,k_6} C_0(\vec{k})\Psi_{2s}(\vec{k})^2 \big)^{\frac{1}{2}} \lesssim N_1^{n_s+s-1+}N_3^{\frac{3}{2}-2s} (N_{5}N_{6})^{1-s}
	\end{multline*}
	Reasoning exactly as we did for $X_{\vec{N}}$ in~\eqref{E[X_N^2] to estimate}, we deduce that for every $s> \frac{9}{10}$, the above inequality implies $E[Y_{\vec{N}}]\lesssim N_1^{0-}$. Hence, we have proven that:
	\begin{equation*}
		|\frkF^\omega_{s,\vec{N}}|\1_{B_R}(U)\lesssim_{s,R} Y_{\vec{N}}
	\end{equation*}
	with $Y_{\vec{N}}$ satisfying the second bound in~\eqref{rv dyadic estimate Rs}. In particular, $\frkF^\omega_{s,\vec{N}}\1_{B_R}(U)$ satisfies the bound in~\eqref{estimate Fs dyadic block}. \\
	To sum up, we have established that:
	\begin{equation*}
		|\frkF_{s,\vec{N}}\1_{B_R}(U)| \leq |\frkF^{(D2)}_{s,\vec{N}}\1_{B_R}(U)| + |\frkF^{(D1)}_{s,\vec{N}}\1_{B_R}(U)| + |\frkF^\omega_{s,\vec{N}}\1_{B_R}(U)| \leq \textnormal{RHS of~\eqref{estimate Fs dyadic block}}
	\end{equation*}
	Now we recall the non-pairing/pairing decomposition in~\eqref{Fs = non pairing + pairing}. Thus, in order to obtain~\eqref{estimate Fs dyadic block} for $\cF_{s,\vec{N}}$, it remains to study the paring contribution $\cF_{s,\vec{N}}^{\downarrow}$. \\
	\textbf{The pairing contribution :} We can rewrite $\cF_{s,\vec{N}}^{\downarrow}$ as:
	\begin{equation*}
		\cF_{s,\vec{N}}^{\downarrow} =  \sum_{\substack{\linc \\  k_3 = k_2 }} \Psi_{2s}^\downarrow(\vec{k}) G^{N_1}_{k_1}\cjg{U^{N_2}_{k_2}}...\cjg{U^{N_6}_{k_6}} +  \sum_{\substack{\linc  \\ k_3 = k_4, \hsp k_3\neq k_2 }} \Psi_{2s}^\downarrow(\vec{k}) G^{N_1}_{k_1}\cjg{U^{N_2}_{k_2}}...\cjg{U^{N_6}_{k_6}}
	\end{equation*}
	which degenerates as:
	\begin{multline}\label{Fs,N degenerated}
		\cF_{s,\vec{N}}^{\downarrow}	= \1_{N_2 = N_3} \norm{U^{N_2}}_{L^2}^2 \sum_{k_1-k_4+k_5-k_6=0 } \Psi_{2s}^\downarrow(\vec{k})G_{k_1}^{N_1} \cjg{U^{N_4}_{k_4}}U^{N_5}_{k_5}\cjg{U^{N_6}_{k_6}} \\ 
		+ \1_{N_3 = N_4} \sum_{k_1-k_2+k_5-k_6=0} \Psi_{2s}^\downarrow(\vec{k}) G^{N_1}_{k_1} W_{k_2}\cjg{U^{N_2}_{k_2}}U^{N_5}_{k_5}\cjg{U^{N_6}_{k_6}} 
	\end{multline}
	with $W_{k_2} = \sum_{k_3 \neq k_2} |U^{N_3}_{k_3}|^2$, and where $\Psi_{2s}^\downarrow \in \{ \Psi_{2s}^{(0),\downarrow},\Psi_{2s}^{(1),\downarrow}\}$ is the degenerated version of $\Psi_{2s} \in \{ \Psi_{2s}^{(0)},\Psi_{2s}^{(1)}\}$ defined in Definition~\ref{def Big Psi}. The important point here is that there is no pairing in the sums above. Indeed, when
	\begin{center}
		 $\Psi_{2s}^\downarrow(\vec{k}) = \Psi_{2s}^{(0),\downarrow}(\vec{k})= \1_{\Omega^\downarrow(\vec{k}) =0} \psi_{2s}^\downarrow(\vec{k}) $
	\end{center}
	the paring contributions have a zero-contribution,
	because if, say, in the first sum in~\eqref{Fs,N degenerated} we have $k_5=k_6$, then $k_1=k_4$ and we deduce that $\psi_{2s}^\downarrow(\vec{k})=0$, that is $\Psi_{2s}^\downarrow(\vec{k})=0$. Similarly, if, say, in the second sum in~\eqref{Fs,N degenerated} we have $k_2=k_5$, then $k_1=k_6$, and we deduce that $\psi_{2s}^\downarrow(\vec{k})=0$.  On the other hand, when
		\begin{center}
		$\Psi_{2s}^\downarrow(\vec{k}) = \Psi_{2s}^{(1),\downarrow}(\vec{k}) = \1_{\Omega^\downarrow(\vec{k}) \neq 0} \frac{\psi_{2s}^\downarrow(\vec{k})}{\Omega^\downarrow(\vec{k})} $,
	\end{center}
	 then, thanks to the factorization of $\Omega^\downarrow$ from Lemma~\ref{lem factorization dgn Omega}, and the constraint $\Omega^\downarrow(\vec{k}) \neq 0$, a pairing contribution cannot happen in the sums above. Hence, we can analyze the terms:
	\begin{align*}
		 &\sum_{k_1-k_4+k_5-k_6=0 } \Psi_{2s}^\downarrow(\vec{k})G_{k_1}^{N_1} \cjg{U^{N_4}_{k_4}}U^{N_5}_{k_5}\cjg{U^{N_6}_{k_6}}, & & \sum_{k_1-k_2+k_5-k_6=0} \Psi_{2s}^\downarrow(\vec{k}) G^{N_1}_{k_1} W_{k_2}\cjg{U^{N_2}_{k_2}}U^{N_5}_{k_5}\cjg{U^{N_6}_{k_6}} 
	\end{align*}
	with the same method as the one for $\frkF_{s,\vec{N}}$. The situation is even more favorable because in $\cF_{s,\vec{N}}^\downarrow$ we additionally benefit from extra negative powers of $N_1$ or $N_3$, since:
	\begin{align*}
		\norm{U^{N_2}}_{L^2}^2\1_{B_R}(U)& \lesssim_{R} N_1^{1-2s+}  & &\textnormal{and} &  |W_{k_2}|\1_{B_R}(U) \leq \norm{U^{N_3}}_{L^2}^2\1_{B_R}(U)\lesssim_{R} N_3^{1-2s+} 
	\end{align*}
	In the end, $\cF^\downarrow_{s,\vec{N}}$ also satisfies the bound in~\eqref{estimate Fs dyadic block}. To summarize, we have proven that:
	\begin{equation*}
		|\cF_{s,\vec{N}}(U)| \1_{B_R}(U) \leq |\frkF_{s,\vec{N}}(U)| \1_{B_R}(U) + |\cF^\downarrow_{s,\vec{N}}(U)| \1_{B_R}(U) \leq \textnormal{RHS of~\eqref{estimate Fs dyadic block}},
	\end{equation*} 
	and the proof of Lemma~\ref{lem exp integrability Rs} is complete.
\end{proof}

	\appendix 
	
	\section{Construction of a global flow and approximation}\label{appendix construction of a global flow and approximation}\subsection{Construction of the flow in the Bourgain spaces}

We introduce the spaces we will use to construct the flow of~\eqref{NLS}. These are due to Bourgain~\cite{Bourgain1993}, see also~\cite{ginibre1995probleme}.
\begin{defn}[Bourgain spaces]
	Let $\sigma, b \in \R$. We define the space $X^{\sigma,b}(\R \times \T)$ as the subspace of the tempered distribution $\cS'(\R \times \T)$ given by
	\begin{equation*}
		X^{\sigma,b}(\R \times \T) := \{ u \in \cS'(\R \times \T) \hsp: \hsp \langle \tau + n^2 \rangle^b \langle n \rangle^\sigma \cF_{t,x\ra \tau,n }(u) \in L^2l^2(\R \times \Z ) \}
	\end{equation*}
		where $\cF_{t,x\ra \tau,n }$ is the space-time Fourier transform. Equipped with the norm:
	\begin{equation*}
		\norm{u}_{X^{\sigma,b}} := \big( \sum_{n\in \Z} \int_\R \langle \tau +n^2 \rangle^{2b} \langle n \rangle^{2\sigma}|\cF_{t,x\ra \tau,n }(u)|^2 d\tau \big)^{\frac{1}{2}}
	\end{equation*}
	$X^{\sigma,b}(\R \times \T)$ is a Banach space in which the Schwartz functions $\cS(\R \times \T)$ are dense. Moreover, if $I$ is an open interval, we define the restriction space $X^{\sigma,b}_{I}$ as the subspace of the distribution space $\cD'(I \times \T)$ given by:
	\begin{equation}\label{Xsigma,b I}
		X^{\sigma,b}_I := \{ u|_{I} \hsp : u \in X^{\sigma,b}(\R \times \T)\hsp \} 
	\end{equation}
	where $u|_{I}$ denotes the restriction of the distribution $u$ on $I \times \T$. This space, endowed with the restriction norm:
	\begin{equation*}
		\norm{u}_{X^{\sigma,b}_I} := \inf \{ \norm{\tld{u}}_{X^{\sigma,b}} \hsp : \hsp \tld{u} \in X^{\sigma,b}(\R \times \T), \hsp \tld{u}|_I = u\}
	\end{equation*}
	is a Banach space in which the subspace $S(I\times \T):=\{u|_I \hsp : \hsp u \in \cS(\R \times \T)\}$ is dense. \\
	Finally, we define the local Bourgain space $X^{\sigma,b}_{loc}(I \times \T)$ as the set: 
	\begin{equation*}
		X^{\sigma,b}_{loc}(I \times \T) := \{ u \in \cD'(I \times \T) \hsp : \hsp \forall J \Subset I, \hsp u|_J \in X^{\sigma,b}_J \}
	\end{equation*}
	where $J \Subset I$ means that $J$ is an open interval such that $\bar{J} \subset I$. We also denote:
	\begin{equation*}
		X^{\sigma,b}_{loc} := \bigcup_{I} \hsp X_{loc}^{\sigma,b}(I \times \T)
	\end{equation*}
	where the union is taken over the open intervals $I$ that contain 0.
\end{defn}

\begin{prop}\label{prop embedding X sigma b into CHsig}
	Let $\sigma \in \R$ and $b>\frac{1}{2}$. Let $I=(a,b)$ an open interval. Then, we have the continuous embedding:
	\begin{equation*}
		X^{\sigma,b}_{(a,b)} \hookrightarrow \cC([a,b];\hsigt)
	\end{equation*}
	where $\cC([a,b];\hsigt)$ is equipped with the sup-norm. As a consequence, we also have:
	\begin{equation*}
			X^{\sigma,b}_{loc}(I \times \T) \hookrightarrow \cC(I;\hsigt)
	\end{equation*}
\end{prop}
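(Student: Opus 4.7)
The plan is to reduce the statement to a one-dimensional Sobolev embedding via the standard linear-frame change of variables. Given $u \in X^{\sigma,b}(\R \times \T)$, set $v(t,x) := e^{-it\p_x^2}u(t,x)$. A direct computation (take the spatial Fourier transform, apply the multiplier $e^{itn^2}$, then take the time Fourier transform) gives $\cF_{t,x \ra \tau,n}(v)(\tau, n) = \cF_{t,x \ra \tau,n}(u)(\tau - n^2, n)$, so that by the change of variable $\tau' = \tau - n^2$ in each $n$-mode,
\begin{equation*}
\| v \|_{H^b(\R_t; \hsigt)}^2 = \sum_n \int_\R \langle \tau + n^2 \rangle^{2b} \langle n \rangle^{2\sigma} |\cF_{t,x \ra \tau,n}(u)(\tau,n)|^2 d\tau = \| u \|_{X^{\sigma,b}}^2.
\end{equation*}
Since $b > \frac{1}{2}$, the one-dimensional Sobolev embedding $H^b(\R; \hsigt) \hookrightarrow \cC_b(\R; \hsigt)$ yields $v \in \cC(\R; \hsigt)$ together with $\sup_t \|v(t)\|_{\hsig} \lesssim_b \|u\|_{X^{\sigma,b}}$.

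I would then recover $u$ by writing $u(t,\cdot) = e^{it\p_x^2} v(t,\cdot)$. Since $e^{it\p_x^2}$ is an $\hsigt$-isometry that is strongly continuous in $t$ (via Plancherel and dominated convergence applied to $\|(e^{it\p_x^2} - e^{it_0\p_x^2})f\|_{\hsig}^2 = \sum_n |e^{-itn^2} - e^{-it_0 n^2}|^2 \langle n \rangle^{2\sigma}|\widehat{f}(n)|^2$), the composition belongs to $\cC(\R; \hsigt)$ with the same uniform bound. This proves the global embedding $X^{\sigma,b}(\R\times \T) \hookrightarrow \cC(\R; \hsigt)$.

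For the restriction space, given $u \in X^{\sigma,b}_{(a,b)}$ and any extension $\tld u \in X^{\sigma,b}(\R\times\T)$ with $\tld u|_{(a,b)} = u$, the previous step produces a continuous $\hsigt$-valued representative of $\tld u$ on $\R$, whose restriction to $[a,b]$ lies in $\cC([a,b]; \hsigt)$ and agrees distributionally with $u$ on $(a,b)$. Two such representatives (from different extensions) coincide on $(a,b)$ and hence on $[a,b]$ by continuity at the endpoints, so the identification is canonical; taking the infimum over extensions yields $\sup_{t \in [a,b]}\|u(t)\|_{\hsig} \lesssim_b \|u\|_{X^{\sigma,b}_{(a,b)}}$. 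The local embedding $X^{\sigma,b}_{loc}(I \times \T) \hookrightarrow \cC(I; \hsigt)$ then follows by exhausting $I$ by sub-intervals $J \Subset I$. There is no real obstacle here: the only genuine content is the norm identity via the linear frame together with the 1D Sobolev embedding, and the mild subtlety about well-definedness of the canonical representative at the endpoints of $[a,b]$ resolves itself by continuity.
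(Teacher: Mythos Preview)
The paper states this proposition without proof, treating it as a standard fact about Bourgain spaces. Your argument is correct and is precisely the standard proof: the isometry $\|e^{-it\p_x^2}u\|_{H^b_t\hsig_x} = \|u\|_{X^{\sigma,b}}$ followed by the one-dimensional Sobolev embedding $H^b(\R;\hsigt)\hookrightarrow \cC_b(\R;\hsigt)$ for $b>\frac12$, and the strong continuity of $t\mapsto e^{it\p_x^2}$ on $\hsigt$. The handling of the restriction norm via extensions and of the local space via exhaustion is also correct.
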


\begin{prop}[Linear estimate]\label{prop lin estimate} Let $\sigma,b \in \R$. Let $I$ an open interval and $t_0 \in I$. Then,
	\begin{equation*}
		\|e^{i(t-t_0)\p_x^2}u_0 \|_{X^{\sigma,b}_I} \lesssim \hsignorm{u_0}
	\end{equation*}
	
\end{prop}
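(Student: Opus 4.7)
The proof is standard for Bourgain spaces: $e^{it\p_x^2}u_0$ has its space-time Fourier transform supported on the paraboloid $\tau + n^2 = 0$, exactly where the weight $\langle \tau + n^2\rangle^{2b}$ is harmless. The only obstruction is that $e^{i(t-t_0)\p_x^2}u_0$ is not itself in $X^{\sigma,b}(\R\times\T)$ --- its time-Fourier transform is a Dirac mass on that paraboloid. The remedy, dictated by the definition of the restriction norm~\eqref{Xsigma,b I}, is to exhibit one explicit extension of $e^{i(t-t_0)\p_x^2}u_0$ from $I\times \T$ to $\R \times \T$ whose full-line $X^{\sigma,b}$ norm is controlled by $\hsignorm{u_0}$.

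The plan is to work with $I$ bounded (the implicit constant in the statement will depend on $I$ through the cutoff), fix a function $\eta \in \cC_c^\infty(\R)$ with $\eta \equiv 1$ on a neighbourhood of $\overline{I}$, and consider the extension
\begin{equation*}
	v(t,x) := \eta(t)\, e^{i(t-t_0)\p_x^2}u_0(x), \qquad (t,x) \in \R \times \T,
\end{equation*}
which agrees with $e^{i(t-t_0)\p_x^2}u_0$ on $I \times \T$. Expanding $u_0 = \sum_n \widehat{u_0}(n) e^{inx}$ and using that the free evolution multiplies the $n$-mode by $e^{-i(t-t_0)n^2}$, a direct computation of the space-time Fourier transform gives
\begin{equation*}
	\cF_{t,x \to \tau,n}[v](\tau,n) = \widehat{u_0}(n)\, e^{it_0 n^2}\, \widehat{\eta}(\tau + n^2).
\end{equation*}
Substituting this into the defining norm and performing the change of variables $\tau \mapsto \tau - n^2$ in each time integral decouples the $n$-sum from the $\tau$-integral and yields $\norm{v}_{X^{\sigma,b}} = \norm{\eta}_{H^b(\R)}\, \hsignorm{u_0}$. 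The conclusion then follows from the restriction-norm inequality $\norm{e^{i(t-t_0)\p_x^2}u_0}_{X^{\sigma,b}_I} \leq \norm{v}_{X^{\sigma,b}}$.

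There is essentially no analytic difficulty here: the estimate amounts to Plancherel plus a translation in the time-frequency variable, and the phase $e^{it_0 n^2}$ is of modulus one so it drops out at the level of absolute values. The only minor point to verify is that $\norm{\eta}_{H^b(\R)} < \infty$, which is immediate since $\eta$ is Schwartz; this is also where the dependence of the implicit constant on $b$, on $I$, and on $t_0$ enters.
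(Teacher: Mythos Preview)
Your proof is correct and is the standard argument for this classical estimate; the paper itself does not supply a proof, stating the proposition as a known result (cf.\ \cite{book_tzirakis}). Your observation that $I$ must be taken bounded for the compactly supported cutoff $\eta$ to exist is well taken --- this is implicit in every use of the proposition in the paper, where $I$ is always of the form $(t_0-\delta,t_0+\delta)$.
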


\begin{prop}[Non-linear estimates, see for example \cite{book_tzirakis}]\label{prop non-lin estimates} Let $\sigma > 0$ and $b>\frac{1}{2}$ close to $\frac{1}{2}$. Let $I$ an open interval. Then,
	\begin{equation}\label{non-linear estimate}
		\norm{F(u)}_{X^{\sigma,b-1}_I} \lesssim |I|^{0+} \norm{u}_{X^{\sigma,b}_I}^5
	\end{equation}
	where $|I|$ is the length of $I$. As a consequence, for every $t_0 \in I$,
	\begin{equation*}
		\norm{\int_{t_0}^t e^{i(t-\tau)\p_x^2}F(u(\tau)) d\tau}_{X^{\sigma,b}_I} \lesssim |I|^{0+} \norm{u}_{X^{\sigma,b}_I}^5
	\end{equation*}
	Moreover, the same estimates  hold replacing $F$ by $F_N$, for every $N \in \N$ (and the constants in the inequalities are independent of $N \in \N$). 
\end{prop}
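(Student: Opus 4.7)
The plan is the standard Bourgain-space argument, combining the Strichartz estimates of Theorem~\ref{Strichartz estimate} with a transfer principle and Hölder/duality. First, I would upgrade the linear Strichartz bounds into embeddings for the $X^{\sigma,b}$ spaces via the transfer principle: any $u \in X^{0,b}(\R \times \T)$ with $b > 1/2$ can be represented as $u(t,x) = \int_\R e^{it\tau}(e^{it\p_x^2} g_\tau)(x)\,d\tau$ with $\|u\|_{X^{0,b}}^2 \sim \int \langle \tau\rangle^{2b} \|g_\tau\|_{L^2_x}^2\,d\tau$. Applying Theorem~\ref{Strichartz estimate} termwise and Cauchy-Schwarz in $\tau$, one obtains
\begin{equation*}
\|u\|_{L^6_{t,x}(\R \times \T)} \lesssim \|u\|_{X^{\eps,\,\frac{1}{2}+}}
\end{equation*}
for any $\eps > 0$, and by $L^2_{t,x}$ duality the companion embedding $\|F\|_{X^{-\eps,\,-\frac{1}{2}-}} \lesssim \|F\|_{L^{6/5}_{t,x}}$.

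Hölder's inequality yields $\|u^5\|_{L^{6/5}_{t,x}} \leq \|u\|_{L^6_{t,x}}^5$, so chaining the two embeddings gives the target estimate at $\sigma = 0$. To promote to $\sigma > 0$, I would decompose dyadically in spatial frequency; writing the Fourier convolution, the output frequency satisfies $n = k_1-k_2+k_3-k_4+k_5$, and the elementary inequality $\langle n\rangle^\sigma \lesssim \max_j \langle k_j\rangle^\sigma \leq \sum_j \langle k_j\rangle^\sigma$ lets me place the $\sigma$-derivative on one factor at a time. Each such term is then bounded by a product of one $\|\langle \p_x\rangle^\sigma u\|_{L^6_{t,x}} \lesssim \|u\|_{X^{\sigma+\eps,\,\frac{1}{2}+}}$ and four copies of $\|u\|_{L^6_{t,x}} \lesssim \|u\|_{X^{\eps,\,\frac{1}{2}+}}$, all controlled by $\|u\|_{X^{\sigma,b}}$ provided $b > 1/2 + \eps$ and $\sigma > \eps$ (both free to arrange).

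The gain $|I|^{0+}$ on a bounded interval $I$ follows by proving the estimate at some $b' \in (\tfrac{1}{2}, b)$ strictly smaller than $b$ and invoking the standard bound $\|u\|_{X^{\sigma,b'}_I} \lesssim |I|^{b-b'}\|u\|_{X^{\sigma,b}_I}$ (valid for time-restricted Bourgain spaces via a smooth time cut-off and Fourier interpolation in $\tau$). The Duhamel consequence is classical, combining this with Proposition~\ref{prop lin estimate} and the bound $\|\int_{t_0}^t e^{i(t-\tau)\p_x^2}F(\tau)\,d\tau\|_{X^{\sigma,b}_I} \lesssim \|F\|_{X^{\sigma,b-1}_I}$ for $b > 1/2$. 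For the truncated version, $\pi_N$ is a Fourier multiplier bounded on every $X^{\sigma,b}$ uniformly in $N$, so $F_N = \pi_N \circ F \circ \pi_N$ inherits the estimate with $N$-independent constants. The main subtlety is the dyadic summation after placing the $\sigma$-weight, which closes thanks to the small $\sigma - \eps > 0$ surplus and the fact that $\eps$ from Strichartz can be made arbitrarily small.
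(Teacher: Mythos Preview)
The paper does not give its own proof of this proposition; it is stated with a reference to \cite{book_tzirakis} and used as a black box. Your sketch is precisely the standard Bourgain-space argument found in such references (transfer principle, $L^6$/$L^4$ Strichartz, H\"older, duality, fractional Leibniz, and the time-localization gain), so there is nothing to compare.

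One small technical point to tighten: since $b>\tfrac12$, the target index $b-1$ sits \emph{above} $-\tfrac12$, whereas your duality step only lands you in $X^{-\eps,-\frac12-}$, which is a weaker norm. In practice one fixes this by pairing with a dual function $v\in X^{-\sigma,1-b}$ (here $1-b<\tfrac12$) and placing $v$ in $L^4_{t,x}$ via the lossless $L^4$ Strichartz estimate (which only needs $1-b>\tfrac38$, i.e.\ $b<\tfrac58$), then distributing the five $u$'s among $L^{20/3}_{t,x}$ (or some equivalent H\"older splitting interpolating $L^4$ and $L^6$). This is a routine adjustment and does not affect the overall structure of your argument.
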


\begin{defn}[Linear and Duhamel operators]\label{def lin op and Duhamel op}
Let $I$ an open interval and $t_0 \in I$. Thanks to Propositions~\ref{prop lin estimate} and~\ref{prop non-lin estimates}, we can introduce the following operators\footnote{We purposely omit the dependence on the interval $I$, writing $S_{t_0}, \cD_{t_0}$ instead of $S_{t_0,I}, \cD_{t_0,I}$, to avoid unnecessary complexity in the notations.}:
\begin{align*}
	&\begin{split}
		S_{t_0} : \hsp & \hsigt \ra X^{\sigma,b}_I \\
		& u_0 \longmapsto e^{i(t-t_0)\p_x^2}u_0
	\end{split} & &\textnormal{and,} &
	& \begin{split}
		\cD_{t_0} : \hsp & X^{\sigma,b}_I \lra X^{\sigma,b}_I \\
		& u \longmapsto -i \int_{t_0}^t e^{i(t-\tau)\p_x^2}F(u(\tau)) d\tau 
	\end{split}
\end{align*}
Similarly, we define $\cD_{t_0}^N$ as $\cD_{t_0}$ replacing $F$ by $F_N$. Moreover, when $t_0 = 0$, we simply write $S$, $\cD$, and $\cD^N$ instead of $S_{t_0}$, $\cD_{t_0}$, and $\cD^N_{t_0}$.
\end{defn}

\begin{rem}
	When we consider $u \in \cC(I; \hsigt)$, with $\sigma>0$ close to $0$, it is not clear how to give a meaning to $|u|^4u$, even in the sense of distributions. Consequently, the notion of solution for~\eqref{NLS} becomes unclear for functions in the class $\cC_t \hsig$. For that reason, we look for solutions in the smaller class $X^{\sigma,b}_{loc}(I \times \T)$ ($b>\frac{1}{2}$), where the non-linearity is well-defined as a distribution thanks to~\eqref{non-linear estimate} (combined with the gluing principle for distributions).
\end{rem}
	
\begin{defn}
		We say that a pair $(u,I)$ is a solution to~\eqref{NLS} in the class $X^{\sigma,b}_{loc}$, where $I$ is an open interval that contains $0$, and $u \in X^{\sigma,b}_{loc}(I \times \T)$,  if:
		\begin{enumerate}
			\item $u(0) = u_0 \in \hsigt$
			\item $u$ satisfies the equation $i \p_t u + \p_x^2u = |u|^4u$ in the sense of distributions.
		\end{enumerate} 
		Equivalently, $(u,I)$ is a solution to~\eqref{NLS} in the class $X_{loc}^{\sigma,b}$, if $u \in X^{\sigma,b}_{loc}(I \times \T)$, and for all $J \Subset I$, the following equality:
		\begin{equation*}
			u = S(u_0) + \cD(u)
		\end{equation*}
		holds in $X^{\sigma,b}_J$.
\end{defn} 

\begin{rem}
	For the equation~\eqref{truncated equation}, the non-linearity $\pi_N (|\pi_N u|^4 \pi_N u)$ is well defined as an element of $\cC_t \hsig$ for any $u \in \cC_t \hsig$. The solutions of~\eqref{truncated equation} factorized as in Proposition~\ref{prop factorization of the truncated flow}; they are global thanks to the $L^2$-norm conservation, and unique in the class $\cC_t \hsig$. Hence, the use of the $X^{\sigma,b}$ spaces is not necessary in order to find solutions of~\eqref{truncated equation}. However, we will run a fixed point argument in the space $X^{\sigma,b}$ for both~\eqref{NLS} and~\eqref{truncated equation}. Indeed, this will first prove the existence and the uniqueness of solutions to~\eqref{NLS}, and secondly, it will help us to prove approximation properties of $\Phi$ (the flow of~\eqref{NLS}) by $\Phi^N$ (the flow of~\eqref{truncated equation}). 
	 Note also that the solutions of~\eqref{truncated equation} in the class $X^{\sigma,b}_{loc}$ coincide with the solutions of~\eqref{truncated equation} in the class $\cC_t \hsig$.
\end{rem}

From the linear and non-linear estimates in Propositions~\ref{prop lin estimate} and~\ref{prop non-lin estimates}, we classically obtain:

\begin{prop}[Local theory]\label{prop local theory} Let $\sigma>0$ and $b > \frac{1}{2}$ close to $\frac{1}{2}$. \\
	--For every $R_0 > 0$, there exists $R>R_0$ and $\delta >0$ such that for every $\hsignorm{u_0} \leq R_0$, every $t_0 \in \R$, and every $N \in \N$, the maps:
	\begin{align}\label{duhamel maps}
		&\begin{split}
			\Gamma^{t_0}_{u_0}: \hsp &B^{t_0}_R(\delta) \lra B^{t_0}_R(\delta) \\
			& u \longmapsto S_{t_0}(u_0) + \cD_{t_0}(u)
		\end{split} & &\textnormal{and} & &\begin{split}
		\Gamma^{t_0,N}_{u_0}: \hsp &B^{t_0}_R(\delta) \lra B^{t_0}_R(\delta) \\
		& u \longmapsto S_{t_0}(u_0) + \cD_{t_0}^N(u)
		\end{split}
	\end{align} 
	are contractions with a universal contraction coefficient $\gamma \in (0,1)$, and where $B^{t_0}_R(\delta)$ is the closed centered ball of radius $R$ in $X^{\sigma,b}_{(t_0 -\delta , t_0 + \delta)}$. \\
	
	-- As a consequence, for every $u_0 \in \hsigt$, there exists a unique maximal solution $u \in X^{\sigma,b}_{loc}(I_{max}(u_0) \times \T)$ to~\eqref{NLS}, in the class $X_{loc}^{\sigma,b}$, that we denote $\Phi(t)u_0$, and where $I_{max}(u_0)$ is an open interval containing 0. Moreover, if $I_{max}(u_0)$ is strictly included in $\R$, then:
	\begin{equation*}
		\hsignorm{\Phi(t)u_0} \lra + \infty
	\end{equation*}  
	as $t \ra \p I_{max}(u_0)$ (the boundary of the interval $I_{max}(u_0)$).
\end{prop}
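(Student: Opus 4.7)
The plan is to carry out a Banach fixed point argument for the Duhamel maps $\Gamma^{t_0}_{u_0}$ and $\Gamma^{t_0,N}_{u_0}$ in the Bourgain restriction space $X^{\sigma,b}_{(t_0-\delta,t_0+\delta)}$, with parameters chosen so that the contraction coefficient is independent of $u_0$, $t_0$, and $N$. First, I would combine the linear estimate (Proposition~\ref{prop lin estimate}) and the non-linear estimate (Proposition~\ref{prop non-lin estimates}) to obtain, for some $\eta>0$ and constants $C_0,C_1$ independent of $t_0 \in \R$, $N \in \N$, and of the length of the interval $I=(t_0-\delta,t_0+\delta)$,
\begin{align*}
\|S_{t_0}(u_0)\|_{X^{\sigma,b}_I} &\leq C_0 \hsignorm{u_0}, &
\|\cD_{t_0}^N(u) - \cD_{t_0}^N(v)\|_{X^{\sigma,b}_I} &\leq C_1\delta^\eta \bigl(\|u\|_{X^{\sigma,b}_I}^4+\|v\|_{X^{\sigma,b}_I}^4\bigr) \|u-v\|_{X^{\sigma,b}_I}.
\end{align*}
The difference estimate is not stated directly in Proposition~\ref{prop non-lin estimates}, but follows by polarization because $F$ and $F_N$ are purely multilinear expressions: writing $F(u)-F(v) = \sum_{j=1}^{5} M_j(u,v)$ where each $M_j$ depends linearly on $u-v$ and quartically on $u$ or $v$, the same computation used to prove~\eqref{non-linear estimate} applies to each $M_j$. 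Uniformity in $t_0$ is automatic by time translation; uniformity in $N$ is claimed in the statement of Proposition~\ref{prop non-lin estimates}.

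Second, I would fix the parameters: set $R := 2 C_0 R_0$ and choose $\delta>0$ small enough that $5 C_1 \delta^\eta R^4 \leq \gamma$ for a fixed $\gamma \in (0,1)$ (e.g. $\gamma=\tfrac12$); the choice depends only on $R_0$ (through $R$) and on the universal constants. Applying the estimates above with $v=0$ gives the self-map property $\Gamma^{t_0}_{u_0}(B^{t_0}_R(\delta)) \subset B^{t_0}_R(\delta)$ (since $C_0 R_0 = R/2$ and $C_1\delta^\eta R^5 \leq \gamma R$), and the difference estimate gives contractivity with coefficient $\gamma$. The very same bounds apply verbatim to $\Gamma^{t_0,N}_{u_0}$. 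The Banach fixed point theorem then produces the unique fixed point in the ball.

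Third, I would build the maximal solution by iterating the local theory. Starting from $t_0=0$, the fixed point argument yields a solution on $(-\delta,\delta)$ with $\delta = \delta(\hsignorm{u_0})$; applying the local theory at any time $t_1$ in this interval and gluing (using the embedding into $\cC_t \hsig$ from Proposition~\ref{prop embedding X sigma b into CHsig}) extends the solution, producing an open maximal interval $I_{max}(u_0)$. Uniqueness in the full class $X^{\sigma,b}_{loc}$, rather than merely within the small ball, follows from a standard bootstrap: the set of times at which two solutions coincide is closed in $I_{max}$ and open by the local uniqueness (applied at each such time with $R_0 = \hsignorm{\Phi(t)u_0}$), hence equals $I_{max}$.

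Finally, the blow-up alternative follows by contradiction: if $I_{max} = (T_-, T_+)$ with $T_+<\infty$ and $\limsup_{t \to T_+} \hsignorm{\Phi(t)u_0} < \infty$, pick any $t_1 \in I_{max}$ close enough to $T_+$ so that $t_1 + \delta(R_0) > T_+$, with $R_0$ a uniform bound for $\hsignorm{\Phi(t)u_0}$ near $T_+$; applying the local theory at $t_1$ gives an extension beyond $T_+$, contradicting maximality. The main obstacle I anticipate is the uniqueness step, because the fixed point argument only produces uniqueness within $B^{t_0}_R(\delta)$; extracting uniqueness in $X^{\sigma,b}_{loc}$ requires the connectedness/continuity argument outlined above, together with the fact that the Bourgain norm of a solution is locally bounded along the trajectory.
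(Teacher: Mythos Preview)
Your proposal is correct and follows precisely the classical fixed-point argument that the paper invokes but does not spell out: the paper simply writes ``From the linear and non-linear estimates in Propositions~\ref{prop lin estimate} and~\ref{prop non-lin estimates}, we classically obtain'' Proposition~\ref{prop local theory}, and your outline is exactly that classical argument. One minor slip: in the blow-up alternative you should negate $\hsignorm{\Phi(t)u_0}\to+\infty$ by assuming $\liminf$ (not $\limsup$) is finite, which yields a sequence $t_n\to T_+$ with uniformly bounded norm along which you apply the local theory; the rest is fine.
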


In \cite{Li_Wu_Xu_global}, the authors proved that the $\hsig$-norm of the local solutions from this proposition do not blow up in finite time when $\sigma > \frac{2}{5}$. More precisely :

\begin{thm}[see \cite{Li_Wu_Xu_global}]\label{thm GWP} Let $\sigma > \frac{2}{5}$. There exists a continuous function $f : \R \ra \R^+$ such that for every $R>0$, there exists a constant $C>0$ such that for every $\hsignorm{u_0} \leq R$,
	\begin{equation*}
	\hsignorm{\Phi(t)u_0} \leq C f(t), \hspace{0.3cm} t \in I_{max}(u_0)
	\end{equation*}
	Consequently, $I_{max}(u_0) = \R$, that is the solutions of~\eqref{NLS} are global, and for every $R>0$, $T>0$, there exists $\Lambda(R,T)>0$ such that:
	\begin{equation}\label{Phi B included B}
		\Phi(t)(B_R) \subset B_{\Lambda(R,T)}
	\end{equation}
	for any $|t| \leq T$.
\end{thm}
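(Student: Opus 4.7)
The plan is to establish the a priori polynomial bound $\hsignorm{\Phi(t)u_0} \leq C(R) f(t)$ for $\hsignorm{u_0} \leq R$; once this is known, the blow-up alternative of Proposition~\ref{prop local theory} forces $I_{max}(u_0) = \R$ and the inclusion \eqref{Phi B included B} follows with $\Lambda(R,T) := C(R) \sup_{|t|\leq T} f(t)$. For $\sigma \geq 1$ the bound is standard from the conservations of mass and Hamiltonian. For $\tfrac{2}{5} < \sigma < 1$, which is the delicate range, I would implement the $I$-method of Colliander-Keel-Staffilani-Takaoka-Tao refined by a Poincar\'e-Dulac normal form reduction, as in Bourgain~\cite{bourgain2004remark} and Li-Wu-Xu~\cite{Li_Wu_Xu_global}.

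Fix a large dyadic parameter $N$ to be chosen at the end, and introduce the smoothing multiplier $I_N$ defined by $\widehat{I_Nv}(k) := m_N(k)\widehat{v}(k)$, where $m_N : \R \to [0,1]$ is smooth, symmetric, satisfies $m_N \equiv 1$ on $[-N,N]$, and equals $(N/|k|)^{1-\sigma}$ for $|k| \geq 2N$. A direct Fourier-side computation yields
\begin{equation*}
\hsignorm{v} \leq \norm{I_N v}_{H^1} \lesssim N^{1-\sigma}\hsignorm{v},
\end{equation*}
so it suffices to control the Hamiltonian $E(I_N u) := \tfrac12 \norm{\partial_x I_Nu}_{L^2}^2 + \tfrac16 \norm{I_Nu}_{L^6}^6$ uniformly on $[0,T]$. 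Since $I_N$ commutes with $e^{it\partial_x^2}$, the function $I_N u$ satisfies~\eqref{NLS} with nonlinearity $F(I_N u) + (I_N F(u) - F(I_N u))$, and the commutator term is small thanks to $m_N$ being near-constant outside the transition region $|k| \sim N$.

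On each local interval of length $\delta \sim \hsignorm{u_0}^{-\theta}$ given by Proposition~\ref{prop local theory}, $\tfrac{d}{dt}E(I_N u(t))$ writes as a sextilinear expression in the Fourier coefficients of $u$ that vanishes at the fully resonant frequencies. The raw combination of Strichartz (Theorem~\ref{Strichartz estimate}) and commutator bounds gives a per-interval increment of size $N^{-\alpha_0}(1+E)^{\beta_0}$, which only closes the argument for $\sigma > 1/2$. To descend to $\sigma > 2/5$, integrate by parts in time against the oscillatory phase $e^{i\tau\Omega(\vec k)}$: non-resonant interactions are absorbed into a correction $R_N(u)$, producing a modified energy $\tilde E_N := E(I_N u) + R_N$ whose time derivative contains only boundary contributions, genuine resonances (controlled by Lemma~\ref{lem lower bound Omega when three high and three low freq} combined with the divisor bound of Lemma~\ref{lem counting bound 3 integers}), and octilinear remainders. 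Each piece is estimated dyadically in the spirit of Lemma~\ref{lem deterministic estimate via Strichartz's trick}, yielding a strictly larger per-interval gain $|\Delta \tilde E_N| \leq C N^{-\alpha}(1+\tilde E_N)^\beta$.

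Close the estimate by a bootstrap: on $[0,T]$, iterating the per-interval increment yields
\begin{equation*}
\tilde E_N(u(T)) \leq \tilde E_N(u_0) + C T \delta^{-1} N^{-\alpha} (1 + \tilde E_N(u_0))^\beta,
\end{equation*}
and choosing $N = N(T, \hsignorm{u_0})$ large enough keeps $\tilde E_N(u(t)) \leq 2 \tilde E_N(u_0) \lesssim N^{2(1-\sigma)}\hsignorm{u_0}^2$ on $[0,T]$. Converting via $\tilde E_N \sim \norm{I_N u}_{H^1}^2$ and the embedding above produces the desired polynomial-in-$T$ bound on $\hsignorm{u(T)}$, defining the continuous function $f$ of the statement. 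The main obstacle is the sharp multilinear estimate for $\tfrac{d}{dt}\tilde E_N$ as $\sigma$ approaches $2/5$: the smoothing gain $N^{-\alpha}$ shrinks to zero, so one must exhaust the number-theoretic counting bounds and every Strichartz improvement simultaneously, tracking each logarithmic loss. This delicate analysis is the technical heart of~\cite{Li_Wu_Xu_global}, which I would cite for the quantitative details rather than reproduce here.
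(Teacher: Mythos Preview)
The paper does not contain its own proof of this theorem; it is stated with the header ``see \cite{Li_Wu_Xu_global}'' and simply imported as a black box from that reference, so there is no internal argument to compare your proposal against. Your sketch of the $I$-method combined with a normal form correction is a faithful outline of the strategy in \cite{bourgain2004remark,Li_Wu_Xu_global}, and you correctly identify that the paper's own Lemmas~\ref{lem lower bound Omega when three high and three low freq}, \ref{lem counting bound 3 integers}, and \ref{lem deterministic estimate via Strichartz's trick} are of the same flavor as the multilinear ingredients needed there. Since the paper treats this result as a citation rather than proving it, your choice to defer the sharp multilinear estimate to \cite{Li_Wu_Xu_global} is exactly what the paper does as well.
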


\subsection{Approximation of the flow by the truncated flow} This paragraph is dedicated to the proof of Proposition~\ref{prop approx Phi by PhiN set version}. We recall that we denote $\Phi$ and $\Phi^N$ the flow of~\eqref{NLS} and~\eqref{truncated equation} respectively. To prove Proposition~\ref{prop approx Phi by PhiN set version}, we rely on the following lemma:

\begin{lem}\label{lem approx phi by phiN}
	Let $\sigma > \frac{2}{5}$. Let $T>0$ and $K \subset \hsigt$ a compact set. Then,
	\begin{equation}\label{approx phi by phiN}
		\sup_{|t| \leq T} \hsp  \sup_{u_0 \in K} \hsp \hsignorm{\Phi(t)u_0 - \Phi^N(t)u_0} \tendsto{N \ra \infty} 0
	\end{equation}
\end{lem}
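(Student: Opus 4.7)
The plan is to compare $\Phi$ and $\Phi^N$ on a finite common partition of $[-T,T]$, combining the local contraction scheme of Proposition~\ref{prop local theory} on each piece with the a priori $H^\sigma$ bound produced by Theorem~\ref{thm GWP}. Set $\Lambda := \sup\{\hsignorm{\Phi(t)u_0} : |t|\leq T, u_0 \in K\}$, which is finite by Theorem~\ref{thm GWP}, and apply Proposition~\ref{prop local theory} with $R_0 = 3\Lambda$ to obtain constants $R>R_0$, $\delta>0$ and $\gamma \in (0,1)$ such that the maps $\Gamma^{t_0}_{w_0}$ and $\Gamma^{t_0,N}_{w_0}$ are $\gamma$-contractions of $B^{t_0}_R(\delta)$ whenever $\hsignorm{w_0}\leq R_0$, uniformly in $|t_0|\leq T$ and $N\in\N$. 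Partition $[0,T]$ into $M=\lceil 2T/\delta\rceil$ intervals $I_k=[t_k,t_{k+1}]$ of length $\delta/2$; the negative time direction is treated symmetrically.

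On each $I_k$, under the bootstrap hypothesis $\hsignorm{\Phi^N(t_k)u_0}\leq R_0$, both $u:=\Phi(\cdot)u_0|_{I_k}$ and $u^N:=\Phi^N(\cdot)u_0|_{I_k}$ are the unique fixed points in $B^{t_k}_R(\delta)$ of the respective Duhamel maps. Writing their difference as
\begin{equation*}
u - u^N = S_{t_k}(u(t_k)-u^N(t_k)) + \bigl(\cD_{t_k}(u)-\cD_{t_k}(u^N)\bigr) + \bigl(\cD_{t_k}(u^N)-\cD^N_{t_k}(u^N)\bigr),
\end{equation*}
the $\gamma$-Lipschitz property of $\cD_{t_k}$ on $B^{t_k}_R(\delta)$, Proposition~\ref{prop lin estimate}, and the standard linear Duhamel bound in $X^{\sigma,b}$ spaces yield
\begin{equation*}
\norm{u-u^N}_{X^{\sigma,b}_{I_k}} \leq \frac{C}{1-\gamma}\Bigl(\hsignorm{u(t_k)-u^N(t_k)} + \norm{F(u^N)-F_N(u^N)}_{X^{\sigma,b-1}_{I_k}}\Bigr).
\end{equation*}
Combined with the embedding of Proposition~\ref{prop embedding X sigma b into CHsig}, this produces a recursion $e_{k+1}\leq C_0(e_k + r_k^N)$, where $e_k:=\sup_{u_0\in K}\hsignorm{u(t_k)-u^N(t_k)}$ and $r_k^N:=\sup_{u_0\in K}\norm{F(u^N)-F_N(u^N)}_{X^{\sigma,b-1}_{I_k}}$ and $C_0$ is independent of $k$ and $N$. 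Since $e_0=0$, one gets $e_k \leq M C_0^M \max_j r_j^N$ for all $k\leq M$.

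The crux is to prove $\max_k r_k^N \to 0$ as $N\to \infty$, uniformly in $u_0 \in K$. Using the factorization of Proposition~\ref{prop factorization of the truncated flow}, we have $u^N(t) = \pi_N u^N(t) + e^{it\p_x^2}\pi_N^\perp u_0$, whence
\begin{equation*}
F(u^N) - F_N(u^N) = \bigl(F(u^N) - F(\pi_N u^N)\bigr) + (1-\pi_N)F(\pi_N u^N).
\end{equation*}
The first term is estimated via the multilinear structure underlying~\eqref{non-linear estimate} by a factor $\norm{u^N - \pi_N u^N}_{X^{\sigma,b}_{I_k}} = \norm{e^{it\p_x^2}\pi_N^\perp u_0}_{X^{\sigma,b}_{I_k}} \lesssim \hsignorm{\pi_N^\perp u_0}$, which tends to $0$ uniformly for $u_0$ in the compact set $K$. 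The second term is the high-frequency Fourier truncation of a function uniformly bounded in $X^{\sigma,b-1}_{I_k}$ by~\eqref{non-linear estimate}, and therefore converges to $0$ in $X^{\sigma,b-1}_{I_k}$ by dominated convergence on the Fourier side.

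The main obstacle is maintaining the a priori bound $\hsignorm{\Phi^N(t_k)u_0}\leq R_0$ throughout the iteration. We close it by induction on $k$: if $e_k\leq \Lambda$, then $\hsignorm{u^N(t_k)} \leq \hsignorm{u(t_k)} + e_k \leq 2\Lambda < R_0$, so the local contraction applies on $I_k$ and the recursion yields $e_{k+1}\leq C_0^{k+1}M\max_j r_j^N$. For $N$ large enough that $C_0^M M \max_j r_j^N < \Lambda$, this closes the induction on all of $[0,T]$ and simultaneously delivers $\sup_{|t|\leq T, u_0\in K}\hsignorm{\Phi(t)u_0 - \Phi^N(t)u_0} \leq C_0^M M \max_j r_j^N \to 0$, as claimed.
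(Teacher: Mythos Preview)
Your overall architecture (partition $[0,T]$, run the local contraction on each piece, maintain an $H^\sigma$ bootstrap for $\Phi^N$ via the a priori bound on $\Phi$) matches the paper's proof. The gap is in your treatment of the residual term
\[
(1-\pi_N)\,F(\pi_N u^N).
\]
You claim this tends to $0$ in $X^{\sigma,b-1}_{I_k}$ ``by dominated convergence on the Fourier side'' because $F(\pi_N u^N)$ is uniformly bounded. But the function being truncated depends on $N$: uniform boundedness of a sequence $(f_N)$ in a Hilbert space does not force $\|(1-\pi_N)f_N\|\to 0$ (take $f_N=e^{i(N+1)x}$ in $L^2(\T)$). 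Dominated convergence would require a fixed integrable majorant for the Fourier data of $F(\pi_N u^N)$, which you do not have; and any attempt to compare $F(\pi_N u^N)$ to $F(u)$ reintroduces a factor of $\|u-u^N\|$, making the argument circular. As written, the term $r_k^N$ is not shown to vanish, so the recursion does not close.

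The paper avoids this by swapping the two halves of your decomposition: instead of writing
\[
u-u^N=\bigl(\cD_{t_k}(u)-\cD_{t_k}(u^N)\bigr)+\bigl(\cD_{t_k}(u^N)-\cD^N_{t_k}(u^N)\bigr)
\]
and absorbing the first bracket via the Lipschitz property of $\cD_{t_k}$, it writes
\[
u-u^N=\bigl(\cD_{t_k}(u)-\cD^N_{t_k}(u)\bigr)+\bigl(\cD^N_{t_k}(u)-\cD^N_{t_k}(u^N)\bigr)
\]
and absorbs the \emph{second} bracket via the Lipschitz property of $\cD^N_{t_k}$. The remaining error $\cD_{t_k}(u)-\cD^N_{t_k}(u)$ then reduces to $\|\pi_N^\perp\cD_{t_k}(u)\|_{X^{\sigma,b}}$ and $\|\pi_N^\perp u\|_{X^{\sigma,b}}$, i.e.\ high-frequency projections applied to the \emph{true} solution $u=\Phi_{u_0}$. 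Since $u_0\mapsto\Phi_{u_0}$ and $\cD_{t_k}$ are continuous, these live in fixed compact subsets of $X^{\sigma,b}_{I_k}$ as $u_0$ ranges over $K$, and $\pi_N^\perp\to 0$ uniformly on compact sets (Lemma~\ref{lem cvgce on compact set of bdd lin map}). That is what replaces your dominated-convergence step.
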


First, we show how this lemma implies Proposition~\ref{prop approx Phi by PhiN set version}, and we prove it afterwards. 

\begin{proof}[Proof of Proposition~\ref{prop approx Phi by PhiN set version} assuming Lemma~\ref{lem approx phi by phiN}] Let $\eps>0$. \\
	-- Firstly, we prove~\eqref{PhiN K included PhiK + B}. Let $T>0$. By~\eqref{approx phi by phiN}, there exists $N_{\eps,K,T} \in \N$ such that for every $N \geq N_{\eps,K,T}$:
	\begin{equation*}
			\sup_{|t| \leq T} \hsp  \sup_{u_0 \in K} \hsp \hsignorm{\Phi(t)u_0 - \Phi^N(t)u_0} \leq \eps 
	\end{equation*}
	As a consequence, the equality
	\begin{equation*}
		\Phi^N(t)u_0 = \Phi(t)u_0 + (\Phi^N(t)u_0 - \Phi(t)u_0)
	\end{equation*}
	for all $u_0 \in K$ and $|t|\leq T$ implies~\eqref{PhiN K included PhiK + B}. \\
	
	-- Let us now prove~\eqref{PhiK included PhiN(K+B)}. By continuity of $\Phi(t)$, the set $\Phi(t) K$ is compact in $\hsigt$. Applying the first point~\eqref{PhiN K included PhiK + B} to this set, we know that there exists $N_{\eps,K,t} \in \N$ such that for every $N \geq N_{\eps,K,t}$:
	\begin{equation*}
		\Phi^N(-t) (\Phi(t)K) \subset \Phi(-t)\Phi(t)K + B_{\eps}^{\hsig} = K + B_{\eps}^{\hsig}
	\end{equation*}
	Applying now $\Phi^N(t) = \Phi^N(-t)^{-1}$ to this inclusion leads to~\eqref{PhiK included PhiN(K+B)}.
\end{proof}

In the remaining part of this paragraph, we prove Lemma~\ref{lem approx phi by phiN}. To that end, we first recall a classical result from functional analysis.

  \begin{lem}\label{lem cvgce on compact set of bdd lin map}
	Let $E,F$ two Banach spaces. Let $\{ T_N\}_{N \in \N}$ be a sequence of bounded linear maps with the uniform (in $N \in \N$) bound $\norm{T_N}_{E \ra F} \leq M$, for some constant $M>0$. If for every $u \in E$:
	\begin{equation*}
		\norm{T(u)-T_N(u)}_F \tendsto{N \ra \infty} 0
	\end{equation*}
	for some linear map $T : E \ra F$, then for every compact set $K \subset E$, we have:
	\begin{equation*}
		\sup_{u \in K} \norm{T(u)-T_N(u)}_F \tendsto{N \ra \infty} 0
	\end{equation*}
\end{lem}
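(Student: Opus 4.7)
The plan is to use a classical $\varepsilon$-net argument, exploiting compactness of $K$ together with equicontinuity provided by the uniform operator bound.

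First I would observe that the limit map $T$ is also bounded with $\norm{T}_{E \to F} \leq M$. Indeed, for any $u \in E$, the triangle inequality gives $\norm{T(u)}_F \leq \norm{T(u) - T_N(u)}_F + \norm{T_N(u)}_F \leq \norm{T(u) - T_N(u)}_F + M \norm{u}_E$, and letting $N \to \infty$ (by pointwise convergence at $u$) yields $\norm{T(u)}_F \leq M \norm{u}_E$. Thus both the maps $T_N$ and the limit $T$ are $M$-Lipschitz on $E$.

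Next, fix $\varepsilon > 0$. By compactness of $K$, there exist finitely many points $u_1,\dots,u_n \in K$ such that $K \subset \bigcup_{i=1}^n B_E(u_i,\varepsilon/(3M))$. By the pointwise convergence hypothesis applied at each $u_i$, we can choose $N_0 \in \N$ such that for every $N \geq N_0$ and every $i \in \{1,\dots,n\}$, $\norm{T(u_i) - T_N(u_i)}_F \leq \varepsilon/3$.

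For any $u \in K$ and $N \geq N_0$, pick $i$ with $\norm{u - u_i}_E \leq \varepsilon/(3M)$. Then by the triangle inequality together with the $M$-Lipschitz property of both $T$ and $T_N$,
\begin{equation*}
    \norm{T(u) - T_N(u)}_F \leq \norm{T(u - u_i)}_F + \norm{T(u_i) - T_N(u_i)}_F + \norm{T_N(u_i - u)}_F \leq \tfrac{\varepsilon}{3} + \tfrac{\varepsilon}{3} + \tfrac{\varepsilon}{3} = \varepsilon.
\end{equation*}
Since this bound is uniform in $u \in K$, we obtain $\sup_{u \in K} \norm{T(u) - T_N(u)}_F \leq \varepsilon$ for all $N \geq N_0$, which gives the desired uniform convergence. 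There is no substantial obstacle here: the only slightly subtle point is deriving the bound $\norm{T} \leq M$ on the limit without any a priori continuity hypothesis on $T$, but this comes for free from the pointwise convergence and the uniform operator bound.
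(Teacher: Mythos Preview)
Your proof is correct; this is exactly the standard $\varepsilon$-net argument one would expect. The paper does not actually prove this lemma at all --- it is stated as a ``classical result from functional analysis'' and used without proof --- so your write-up fills in what the paper omits.
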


\begin{proof}[Proof of Lemma \ref{lem approx phi by phiN}]  
	The proof proceeds in two steps. First we prove~\eqref{approx phi by phiN} locally in time, and then, by iteration, we establish the convergence up to time $T$. First of all, let $R_0>0$ such that $K \subset B_{R_0}$. Then, we set:
	\begin{equation}\label{Lambda0}
		\Lambda_0 := 1+ \sup_{|t| \leq T} \sup_{u_0 \in B_{R_0}} \hsignorm{\Phi(t) u_0} 
	\end{equation}
	which is finite from Theorem~\ref{thm GWP}.
	Then, tanks to Proposition~\ref{prop local theory}, we invoke $R> \Lambda_0$ and $\delta>0$ such that for any $u_0 \in B_{\Lambda_0}$, the maps in~\eqref{duhamel maps} are contractions, with a universal contraction coefficient $\gamma \in (0,1)$. Now, we fix $u_0 \in K$. We denote respectively $\Phi_{u_0}$ and $\Phi^N_{u_0}$ the solutions of~\eqref{NLS} and~\eqref{truncated equation} with initial data $u_0$; these functions evaluated at a time $t$ are interchangeably denoted $\Phi_{u_0}(t)$ or $\Phi(t)u_0$ (and $\Phi^N_{u_0}(t)$ or $\Phi^N(t)u_0$). Moreover we simply write $X_I$ instead of $X^{\sigma,b}_I$ for convenience. \\
	
	\underline{Local-in-time convergence:} Recall that $\Phi_{u_0}$ and $\Phi^N_{u_0}$ are respectively given by the fixed point of $\Gamma_{u_0} : B^{0}_R(\delta) \ra B^{0}_R(\delta)$ and $\Gamma^N_{u_0} : B^{0}_R(\delta) \ra B^{0}_R(\delta)$, defined in~\eqref{duhamel maps} (with $t_0 = 0$). Then, we can write:
	\begin{multline*}
		\| \Phi_{u_0} - \Phi^N_{u_0} \|_{X_{(0,\delta)}} = \| \Gamma_{u_0}(\Phi_{u_0}) - \Gamma_{u_0}^N(\Phi^N_{u_0}) \|_{X_{(0,\delta)}} \\ 
		\leq \| \Gamma_{u_0}(\Phi_{u_0}) - \Gamma_{u_0}^N(\Phi_{u_0}) \|_{X_{(0,\delta)}} + \| \Gamma^N_{u_0}(\Phi_{u_0}) - \Gamma_{u_0}^N(\Phi^N_{u_0}) \|_{X_{(0,\delta)}}
	\end{multline*}
	On the one hand, by contraction we have $\| \Gamma^N_{u_0}(\Phi_{u_0}) - \Gamma_{u_0}^N(\Phi^N_{u_0}) \|_{X_{(0,\delta)}} \leq \gamma 	\| \Phi_{u_0} - \Phi^N_{u_0} \|_{X_{(0,\delta)}} $, so that we obtain from the inequality above:
	\begin{equation*}
		\| \Phi_{u_0} - \Phi^N_{u_0} \|_{X_{(0,\delta)}} \leq C_\gamma \| \Gamma_{u_0}(\Phi_{u_0}) - \Gamma_{u_0}^N(\Phi_{u_0}) \|_{X_{(0,\delta)}} = C_\gamma \| \cD(\Phi_{u_0}) - \cD^N(\Phi_{u_0}) \|_{X_{(0,\delta)}} 
	\end{equation*}
	(with $C_\gamma = \frac{1}{1-\gamma}$). Next, from the fact that $F_N(u) = \pi_N F(\pi_N u)$, we have $\cD^N(u) = \pi_N \cD(\pi_N u)$, and we deduce that :
	\begin{equation*}
		\| \Phi_{u_0} - \Phi^N_{u_0} \|_{X_{(0,\delta)}} \leq C_{\gamma} (\| \pi_N^\perp \cD(\Phi_{u_0}) \|_{X_{(0,\delta)}} + \| \pi_N(\cD(\Phi_{u_0}) - \cD(\pi_N \Phi_{u_0})) \|_{X_{(0,\delta)}} )
	\end{equation*}
	Again by contraction,  
	\begin{equation*}
		 \| \pi_N(\cD(\Phi_{u_0}) - \cD(\pi_N \Phi_{u_0})) \|_{X_{(0,\delta)}} \leq  \| \cD(\Phi_{u_0}) - \cD(\pi_N \Phi_{u_0}) \|_{X_{(0,\delta)}} \leq \gamma \| \pi_N^\perp \Phi_{u_0} \|_{X_{(0,\delta)}}
	\end{equation*}
	so,
	\begin{equation*}
			\| \Phi_{u_0} - \Phi^N_{u_0} \|_{X_{(0,\delta)}} \leq C_\gamma (\| \pi_N^\perp \cD(\Phi_{u_0}) \|_{X_{(0,\delta)}} + \| \pi_N^\perp \Phi_{u_0} \|_{X_{(0,\delta)}})
	\end{equation*}
	Consequently, by Proposition~\ref{prop embedding X sigma b into CHsig}, we have (with $C_\gamma$ changing):
	\begin{equation}\label{intermediate bound on phi - phiN}
		\begin{split}
			\sup_{u_0 \in K} \sup_{t \in [0,\delta]} \hsignorm{\Phi_{u_0}(t) - \Phi^N_{u_0}(t)} & \lesssim \sup_{u_0 \in K} 	\| \Phi_{u_0} - \Phi^N_{u_0} \|_{X_{(0,\delta)}} \\
			& \leq C_\gamma \sup_{v \in K_1} \| \pi_N^\perp v \|_{X_{(0,\delta)}} + C_\gamma \sup_{w \in K_2} \| \pi_N^\perp w \|_{X_{(0,\delta)}}
		\end{split}
	\end{equation}
	where:
	\begin{align*}
		K_1 &:= \{ \cD (\Phi_{u_0})  \hsp : \hsp u_0 \in K\} & &\textnormal{and} & K_2 &:= \{ \Phi_{u_0} \hsp : \hsp u_0 \in K\}
	\end{align*}
	These sets are compact subset of $X_{(0,\delta)}$ because $K \subset \hsig$ is compact and the maps:
	\begin{align*}
		\cD :& \hsp X_{(0,\delta)} \ra X_{(0,\delta)} & &\textnormal{and} & \Phi :& \hsp  \hsig \ra X_{(0,\delta)} \hsp ; \hsp u_0 \longmapsto \Phi_{u_0}
	\end{align*} 
	are continuous. Furthermore, the projectors $\pi^\perp_N$ are linear maps on $X_{(0,\delta)}$ and they satisfy $\norm{\pi^\perp_N}_{X_{(0,\delta)} \ra X_{(0,\delta)} } \leq 1$, and they also converge pointwisely to 0. We are then in position to use Lemma~\ref{lem cvgce on compact set of bdd lin map}, and conclude from~\eqref{intermediate bound on phi - phiN} that:
	\begin{equation}\label{local approx cvgce}
			\sup_{u_0 \in K} \sup_{t \in [0,\delta]} \hsignorm{\Phi_{u_0}(t) - \Phi^N_{u_0}(t)} \tendsto{N \ra \infty} 0 
	\end{equation}
	This concludes the first step of the proof. In what follows, we propagate by iteration this result in order to obtain the convergence up to time $T$. \\
	
	\underline{Long-time convergence :} Let $k \in \N$ such that $k \leq \lfloor \frac{T}{\delta} \rfloor$, and suppose	that:
	\begin{equation}\label{induction hypothesis}
		\sup_{u_0 \in K} \sup_{t \in [0,k\delta]} \hsignorm{\Phi_{u_0}(t) - \Phi^N_{u_0}(t)} \tendsto{N \ra \infty} 0
	\end{equation}
	Our goal is to prove that the convergence holds on the interval $[0,(k+1)\delta]$. Indeed, if we do so, then from~\eqref{local approx cvgce}, and by induction, we will obtain the convergence on the interval $[0,T]$, and similarly on $[-T,0]$ by the same argument; it will then complete the proof of the proposition. \\
	By \eqref{induction hypothesis}, we have in particular, with $t_k := k\delta$,
	\begin{equation*}
		\sup_{u_0 \in K} \hsignorm{\Phi_{u_0}(t_k) - \Phi^N_{u_0}(t_k)} \tendsto{N \ra \infty} 0
	\end{equation*}
	Thus, for some $N_0 \in \N$, we have $N \geq N_0 \implies \hsignorm{\Phi^N_{u_0}(t_k)} \leq \Lambda_0$, where we recall that $\Lambda_0$ is defined in~\eqref{Lambda0}. As a consequence, denoting $u_k := \Phi_{u_0}(t_k)$ and $v_k := \Phi^N_{u_0}(t_k)$, the maps \begin{center}
		 $\Gamma_{u_k}^{t_k} : B_R^{t_k}(\delta) \ra B_R^{t_k}(\delta)$ and $\Gamma_{v_k}^{t_k,N} : B_R^{t_k}(\delta) \ra B_R^{t_k}(\delta)$ 
	\end{center}defined in~\eqref{duhamel maps} are contractions, with still the same universal contraction coefficient $\gamma \in (0,1)$. Furthermore, $\Phi_{u_0}$ and $\Phi^N_{u_0}$ coincide on $I_k := (k\delta, (k+1)\delta)$ with the fixed point of $\Gamma_{u_k}^{t_k}$ and $\Gamma_{v_k}^{t_k,N}$ respectively. Now, the argument is very similar to that in the first step. Indeed, we start by writing:
	\begin{equation}\label{u - uN on Ik}
		 \Phi_{u_0} - \Phi^N_{u_0} = \Gamma^{t_k}_{u_k}(\Phi_{u_0}) - \Gamma^{t_k,N}_{v_k}(\Phi^N_{u_0})  = S_{t_k}(u_k-v_k) + \Gamma^{t_k}_{u_k}(\Phi_{u_0}) - \Gamma^{t_k,N}_{u_k}(\Phi^N_{u_0}) 
 	\end{equation}  
 	On the one hand,  
 	\begin{equation*}
 	\|S_{t_k}(u_k-v_k) \|_{X_{I_k}} \lesssim \hsignorm{u_k-v_k} = \hsignorm{\Phi_{u_0}(t_k)-\Phi^N_{u_0}(t_k)} \tendsto{N \ra \infty} 0
 	\end{equation*} 
 	uniformly in $u_0 \in K$. On the other hand, we treat the term $\Gamma^{t_k}_{u_k}(\Phi_{u_0}) - \Gamma^{t_k,N}_{u_k}(\Phi^N_{u_0})$ exactly as we treated the term $\Gamma_{u_0}(\Phi_{u_0}) - \Gamma_{u_0}^N(\Phi^N_{u_0})$ in the first part of the proof. Then, from~\eqref{u - uN on Ik}, we indeed obtain:
 	\begin{equation*}
 		\sup_{u_0 \in K} \sup_{t \in I_k} \hsignorm{\Phi_{u_0}(t) - \Phi^N_{u_0}(t)} \lesssim \sup_{u_0 \in K} \| \Phi_{u_0} - \Phi^N_{u_0}\|_{X_{I_k}} \tendsto{ N \ra \infty} 0
 	\end{equation*}
 	Finally, combining this with~\eqref{induction hypothesis} yields:
 	\begin{equation*}
 			\sup_{u_0 \in K} \sup_{t \in [0,k(\delta +1)]} \hsignorm{\Phi_{u_0}(t) - \Phi^N_{u_0}(t)}  \tendsto{ N \ra \infty} 0
 	\end{equation*}
 	which completes the proof.
\end{proof}

	\bibliographystyle{siam}
	\bibliography{refs_qualitative_quasiinv}

\end{document}